\renewcommand\mathcal{\mathscr}
\theoremstyle{plain}
\newtheorem{theorem}{Theorem}[section]
\newtheorem*{theorem*}{Theorem}
\newtheorem{lemma}[theorem]{Lemma}
\newtheorem{proposition}[theorem]{Proposition}
\theoremstyle{remark}
\newtheorem{remark}[theorem]{Remark}
\numberwithin{equation}{section}
\theoremstyle{definition}
\newtheorem{definition}[theorem]{Definition}
\numberwithin{equation}{section}
\theoremstyle{notation}
\numberwithin{equation}{section}
\theoremstyle{Basic assumptions}
\newtheorem{Basic assumptions}[theorem]{Basic assumptions}
\numberwithin{equation}{section}
\newcommand\quant{\advance\quantno by1
                      \ifnum\quantno=1\qquad\else\quad\fi\forall }
\newcommand\itemno[1]{(\romannumeral #1)}
\renewcommand\Re{\operatorname{\mathrm{Re}}}
\renewcommand\Im{\operatorname{\mathrm{Im}}}
\newcommand\rest[1]{\kern-.1em
          \lower.5ex\hbox{$\scriptstyle #1$}\kern.05em}
\newcommand\diam[1]{\mathrm{diam}#1}
\newcommand\set[1]{{\left\{#1\right\}}}
\renewcommand\mod[1]{\vert{#1}\vert}
\newcommand\bigmod[1]{\bigl\vert{#1}\bigr|}
\newcommand\Bigmod[1]{\Bigl\vert{#1}\Bigr|}
\newcommand\norm[2]{{\Vert{#1}\Vert_{#2}}}
\newcommand\bignorm[2]{\big\Vert{#1}\big\Vert_{#2}}
\newcommand\bignormto[3]{\big\Vert{#1}\big\Vert_{#2}^{#3}}
\newcommand\bigopnormto[3]{\big|\!\big|\!\big| {#1} \big|\!\big|\!\big|_{#2}^{#3}}
\newcommand\bigopnorm[2]{\bigl|\!\bigl|\!\bigl| {#1}
\bigr|\!\bigr|\!\bigr|_{#2}}
\newcommand\wrt{\,\text{\rm d}}
\newcommand\bD{\mathbf{D}}
\newcommand\BB{\mathbb{B}}
\newcommand\BN{\mathbb{N}}
\newcommand\BR{\mathbb{R}} 
\newcommand\BS{\mathbb{S}}
\newcommand\BX{\mathbb{X}}
\newcommand\cA{\mathcal{A}}  
\newcommand\cB{\mathcal{B}}
\newcommand\cE{\mathcal{E}}
\newcommand\cI{\mathcal{I}}
\newcommand\cM{\mathcal{M}}   
\newcommand\cN{\mathcal{N}}
  \newcommand\fX{\mathfrak{X}}
\newcommand\al{\alpha}
\newcommand\be{\beta}
\newcommand\ga{\gamma}    \newcommand\Ga{\Gamma}
\newcommand\de{\delta}
  \newcommand\vep{\varepsilon}
\newcommand\la{\lambda}   \newcommand\La{\Lambda}
\newcommand\om{\omega}    \newcommand\Om{\Omega}  
\newcommand\si{\sigma}
\newcommand\te{\theta}
\newcommand\vp{\varphi}
\newcommand\vr{\varrho}
\newcommand\OV{\overline}
\newcommand\funnyk{k\hbox to 0pt{\hss\phantom{g}}}
\newcommand\lu[1]{L^1(#1)}
\newcommand\lp[1]{L^p(#1)}
\newcommand\laq[1]{L^q(#1)}
\newcommand\ly[1]{L^\infty(#1)}
\newcommand\lorentz[3]{L^{#1,#2}(#3)}
\newcommand\wt{\widetilde}
\newcommand\whH{\widehat{\phantom{G}}\hbox to 0pt{\hss $H$}}
\newcommand\emspace{\hbox to 6pt{\hss}}
\newcommand\ds{\displaystyle}
\newcommand\rmi{\hbox{\rm (i)}}
\newcommand\rmii{\hbox{\rm (ii)}}
\newcommand\rmiii{\hbox{\rm (iii)}}
\newcommand\rmiv{\hbox{\rm (iv)}}
\newcommand\ioty{\int_0^{\infty}}
\newcommand\dtt[1]{\,\frac{\mathrm {d} #1}{ #1}}
\newcommand\One{{\mathbf{1}}}
\newcommand\e{\mathrm{e}}
\newcommand\arcsinh{\mathrm{arcsinh}}
\newcommand\cg[1] {{\color{red}#1}}
\DeclareSymbolFont{EUEX}{U}{euex}{m}{n}
\DeclareSymbolFont{euexlargesymbols}{U}{euex}{m}{n}
\DeclareMathSymbol{\intop}{\mathop}{euexlargesymbols}{"52}
     \def\int{\intop\nolimits}
\DeclareSymbolFont{euexsymbols}     {U}{euex}{m}{n}
\DeclareMathSymbol{\smallint}{\mathop}{euexsymbols}{"52}
\newcommand{\Hmm}[1]{\leavevmode{\marginpar{\tiny%
			$\hbox to 0mm{\hspace*{-0.5mm}$\leftarrow$\hss}%
			\vcenter{\vrule depth 0.1mm height 0.1mm width \the\marginparwidth}%
			\hbox to 0mm{\hss$\rightarrow$\hspace*{-0.5mm}}$\\\relax\raggedright #1}}}
\begin{document}

\title[Hardy--Littlewood maximal operator]
{Hardy--Littlewood maximal operators \\
on certain manifolds with bounded geometry
}

\subjclass[2000]{
	42B25; 
	53C20
}

\keywords{Cartan--Hadamard manifold, pinched negative curvature, Hardy--Littlewood maximal operator.}

\author[S. Meda, S. Pigola, A.G. Setti and G. Veronelli]
{Stefano Meda, Stefano Pigola, \\ Alberto G. Setti and Giona Veronelli}

\address{Stefano Meda:
Dipartimento di Matematica e Applicazioni
\\ Universit\`a di Milano-Bicocca\\
via R.~Cozzi 53\\ I-20125 Milano\\ Italy
\hfill\break
stefano.meda@unimib.it}

\address{Stefano Pigola:
Dipartimento di Matematica e Applicazioni
\\ Universit\`a di Milano-Bicocca\\
via R.~Cozzi 53\\ I-20125 Milano\\ Italy
\hfill\break
stefano.pigola@unimib.it}

\address{Alberto G. Setti:
DiSAT - Sezione di Matematica\\
Universita' dell'Insubria\\
via Valleggio 11\\ I-22100 Como\\  Italy
\hfill\break
alberto.setti@uninsubria.it}

\address{Giona Veronelli:
Dipartimento di Matematica e Applicazioni
\\ Universit\`a di Milano-Bicocca\\
via R.~Cozzi 53\\ I-20125 Milano\\ Italy
\hfill\break
giona.veronelli@unimib.it}

\begin{abstract}
	In this paper we study the $L^p$ boundedness of the centred and the uncentred Hardy--Littlewood maximal operators on certain
	Riemannian manifolds with bounded geometry. Our results complement those of various authors.  We show that,
	under mild assumptions, $L^p$ estimates for the centred operator are ``stable'' under conformal changes of the metric, and prove sharp~$L^p$ estimates for the 
	centred operator on Riemannian models with pinched negative scalar curvature.
	Furthermore, we prove that the centred operator is of weak type $(1,1)$ on the connected sum of two space forms with negative curvature, whereas
	the uncentred operator is, perhaps surprisingly, bounded only on $L^\infty$.  

	We also prove that if two locally doubling geodesic metric measure spaces enjoying the uniform ball size condition are strictly quasi-isometric,
	then they share the same boundedness properties for both the centred and the uncentred maximal operator.

	Finally, we discuss some $L^p$ mapping properties for the centred operator on a specific Riemannian surface introduced by Str\"omberg,
	providing new interesting results. 
\end{abstract}

\maketitle

\setcounter{section}{0}

\section{Introduction}

The purpose of this paper is to prove $L^p$ bounds for the centred and the uncentred Hardy--Littlewood (HL) maximal operators on
a class of Riemannian manifolds with \textit{bounded geometry}, in a sense we shall make precise later,  and exponential volume growth.

\smallskip
Suppose that $(X,d,\mu)$ is a metric measure space.
For the sake of notational convenience for any measurable subset $E$ of~$M$ we often write $\mod{E}$ instead of $\mu(E)$.
Denote by $B_r(x)$ the open ball with centre $x$ and radius $r$, i.e.  $B_r(x) := \{y\in X: d(x,y) < r\}$.
For each locally integrable function $f$ on $X$, we consider its \textit{centred} and its \textit{uncentred} HL maximal functions, defined by
$$
\cM f(x)
:= \sup_{r>0} \, \frac{1}{\mod{B_r(x)}} \, \int_{B_r(x)} \mod{f} \wrt \mu
\quad\hbox{\textrm{and}}\quad
\cN f(x)
:= \sup_{B \ni x} \, \frac{1}{\mod{B}} \, \int_{B} \, \mod{f} \wrt \mu,
$$
respectively, where the last supremum is taken over all open metric balls that contain $x$.

Obviously both $\cM$ and $\cN$ are bounded on $\ly{X}$.  It is natural to speculate whether either $\cM$ or $\cN$ is
bounded on $\lp{X}$ for some finite~$p$.  By the Marcinkiewicz interpolation theorem, the range of all $p$'s such that $\cM$ is bounded
on $\lp{X}$ is an interval, which we denote by $I_X$.  Similarly, $J_X$ will denote the interval of all $p$'s such that $\cN$ is bounded
on $\lp{X}$.

Consider also the local versions of $\cM$ and $\cN$, defined by
$$
\begin{aligned}
\cM_0 f(x)
	 := \sup_{0<r\leq 1} \, \frac{1}{\bigmod{B_r(x)}} \, \int_{B_r(x)} \, \mod{f} \wrt \mu
	 \qquad
\cN_0 f(x)
	 := \sup_{B \ni x: r_B\leq 1} \, \frac{1}{\bigmod{B}} \, \int_{B} \, \mod{f} \wrt \mu,
\end{aligned}
$$
where $r_B$ denote the radius of $B$.  Clearly $\cM_0$ and $\cN_0$ are bounded on $\ly{X}$.

It is worth mentioning that if $(X,d,\mu)$ is locally doubling (see \eqref{f: LDC} for the precise definition), then $I_X$ and $J_X$ depend only on the
``coarse geometry" of $X$, in the following sense.  A localisation argument, which hinges on a partition of unity associated to a $1$-discretisation of $X$, reduces
the problem of establishing a weak type $(1,1)$ estimate for $\cM_0$ and $\cN_0$ to a purely local matter, to which a slight variant of a classical
covering argument applies (see, for instance, \cite[Lemma, p.~9]{St}).  The Marcinkiewicz interpolation theorem then implies that
$\cM_0$ and $\cN_0$ are bounded on $\lp{X}$ for every $p$ in $(1,\infty]$.  Thus, $\cM$ [resp. $\cN$] is bounded on $\lp{X}$ for some $p$ in $(1,\infty)$
if and only if $\cM_\infty$ [resp. $\cN_\infty$] does, where
$$
\begin{aligned}
\cM_\infty f(x)
	& := \sup_{r> 1} \, \frac{1}{\bigmod{B_r(x)}} \, \int_{B_r(x)} \, \mod{f} \wrt \mu \\
\cN_\infty f(x)
	& := \sup_{B \ni x: r_B> 1} \, \frac{1}{\bigmod{B}} \, \int_{B} \, \mod{f} \wrt \mu.
\end{aligned}
$$
It is well known that if $(X,d,\mu)$ is (globally) doubling, then both $\cM$ and $\cN$ are of weak type $(1,1)$, whence $I_X$ and $J_X$ contain $(1,\infty]$.

In this paper we shall mainly be concerned with locally (but not globally) doubling metric measure spaces naturally associated to Riemannian manifolds.  Thus,
we shall consider an $m$-dimensional connected noncompact complete Riemannian manifold $M$, with Riemannian tensor $g$,
induced Riemannian distance~$d$, and Riemannian density $\mu$.  In particular, if $M$ has Ricci curvature bounded from below, then, by the 
relative volume estimate (see, e.g., \cite[Prop. 4.1]{CGT}), the Riemannian measure is locally doubling, and the considerations above apply.

The following two results of J.O.~Str\"omberg and A.D.~Ionescu on symmetric spaces $\BX$ of the noncompact type are landmarks of our investigation:
\begin{enumerate}
	\item[\itemno1]
		$\cM_\infty$ is of weak type $(1,1)$ \cite{Str}, whence $I_\BX = (1,\infty]$;
	\item[\itemno2]
		$\cN_\infty$ is bounded on $\lp{\BX}$ for every $p> 2$ \cite{I1} and it is also of restricted weak type $(2,2)$
		in the case where $\BX$ has real rank one~\cite{I2}.  Furthermore, if $p<2$, then $\cN_\infty$ is unbounded on $\lp{\BX}$.
		Thus, $J_\BX = (2,\infty]$ for noncompact symmetric spaces.
\end{enumerate}
J.-Ph.~Anker, E.~Damek and C.~Yacoub \cite[Corollary~3.22]{ADY} extended the work of Str\"omberg to Damek--Ricci spaces.  

It is worth pointing out that the proofs of the abovementioned results hinge on the very rich structure of noncompact symmetric spaces, and
it is seems difficult to adapt them to a larger class of manifolds with ``similar" geometric features but less ``rigid" structure.
Nevertheless, there are some generalisations of these results to Riemannian manifolds with nonconstant curvature,
due to Str\"omberg himself, N.~Lohou\'e and H.-Q.~Li.

Str\"omberg \cite[p.~126]{Str} proved that if $0<a<b<2a$ and $M$ is a surface isometric to the unit disc endowed with the metric
$\wrt s^2 = \la(z) \, \mod{\wrt z}^2$, with $\ds\lim_{\mod{z}\to 1^-} \, \la(z) = \infty$ and Gaussian curvature $\kappa$ satisfying the inequality
$$
-b^2\leq \kappa \leq -a^2,
$$
then~$I_M$ contains $(b/a,\infty]$.  He complemented this result by exhibiting a surface $\Pi$ with Gaussian curvature
satisfying the bound above for which~$\cM$ is unbounded on $\lp{\Pi}$ if $p<b/a$, thus proving that $I_{\Pi}$ is contained in $[b/a,\infty]$.
However, he gave only a suggestion for the proof of the unboundedness result: the reader will find full details in the proof of
Theorem~\ref{t: Str}~\rmi\ below.

We emphasise that Str\"omberg reduces the proof of the positive result to $L^p$ bounds for a certain
convolution operator on the hyperbolic half-plane.  The kernel of such operator belongs to $\lp{\Pi}$ for all $p>b/a$.  Thus, if $b<2a$,
then one can appeal to the Kunze--Stein phenomenon, and obtain the desired conclusion.  In fact, it is not hard to show that such kernel belongs
to weak-$L^{b/a}$, so that the sharp form of the Kunze--Stein phenomenon \cite{Co} implies that $\cM$ is also of restricted weak type $({b/a},{b/a})$.

It is natural to speculate what happens in the case where $b>2a$.  In Theorem~\ref{t: Str}~\rmii\ below we shall prove that in this case $\cM$ is bounded
on $\lp{\Pi}$ (if and) only if $p$ is equal to $\infty$.

Lohou\'e \cite{Lo} proved that if $M$ is a Cartan--Hadamard manifold with sectional curvature $K_\si$ satisfying the bound
\begin{equation} \label{f: bounds sect curv}
	-b^2 \leq K_\si \leq -a^2,
\end{equation}
and $0<a<b<5b/4$, then $\cM$ is bounded on $\lp{M}$ for every $p>a/(3a-2b)$.  This threshold index is an outcome of the method employed by the author,
and it is not sharp.

{The previous results lead to conjecture that if $M$ is a Cartan--Hadamard manifold with sectional curvature $K_\si$ 
satisfying the bound \eqref{f: bounds sect curv}, and $1 <a < b < 2a$, then $\cM$ is bounded on $\lp{M}$ for all $p>b/a$.  We are not able to
prove this conjecture; however, we show that if $M$ is a complete, noncompact $m$-dimensional \textit{model manifold} with pinched negative
\textit{scalar curvature}, i.e.
\begin{equation} \label{f: bounds scalar curv}
	-m(m-1)\, b^2 \leq \mathrm{Scal}_\si \leq -m(m-1)\,a^2,
\end{equation}
then $\cM$ is bounded on $\lp{M}$ for $p>b/a$ (see Theorem~\ref{t: scalar} below).  We obtain this result as a consequence of Theorem~\ref{t: conformal},
where we prove, under reasonably general assumptions, a ``conditional'' $L^p$ boundedness result for $\cM$ on manifold with conformally equivalent metrics.  
}

Interesting results on a class of conic manifolds with nonconstant curvature related to the geometric framework considered by Str\"omberg
can be found in \cite{L1,L2,L3} and in the papers cited therein.
In particular, \cite{L1} and \cite{L2} focus on an interesting class of conic manifolds~$X$ of the form $X_0\times (0,\infty)$, where $X_0$ is a
length metric measure space.  The distance~$d$ on~$X$ is defined implicitly by
$$
\cosh d\big((x_1,y_1), (x_2,y_2)\big)
:= (2y_1y_2)^{-1} \, \big[y_1^2+y_2^2+d_{X_0}(x_1,x_2)^2\big],
$$
where $x_1$ and $x_2$ are in $X_0$ and $y_1$, $y_2$ are positive numbers and the measure~$\mu$ on $X$ is the product of the measures $\mu_0$ on $X_0$ and
$\wrt y/y^{N+1}$ on $(0,\infty)$ for some nonnegative constant~$N$.  Li proves that either~$\cM$ is bounded on $\lp{X}$
for every $p$ in $(p_0,\infty]$, where $p_0$ depends on the parameters describing the volume of balls on $X$, or~$\cM$ is bounded only on $\ly{X}$.
The index $p_0$ is optimal in the class of the conic manifolds considered.
It is worth observing that, except for a few special cases, the metric measure spaces studied by Li have, in a way or another, unbounded geometry.
In particular, in the case where $d_{X_0}$ comes from a Riemannian metric, then, with a few trivial exceptions, the sectional curvature of $X$ is unbounded.

Related results are contained in \cite{L2}.

In these papers,  Li exhibits
examples of manifolds $M$ with cusps, where the range of $p$'s for which $\cM$ and $\cN$ are bounded on $L^p$ can be any interval of the form
$(p_0,\infty)$, or $(2p_0,\infty)$, respectively, where $p_0$ is a number in $[1,\infty)$, depending on certain parameters related to
the geometry of $M$.
Our results are reminiscent of his.

All the aforementioned results deal with specific classes of manifolds, where estimates of averages $\ds(1/\mod{B}) \, \int_B \mod{f} \wrt \mu$
can be obtained with reasonable effort as a consequence of the explicit form of the metric.

This leaves open the question of what happens on Riemannian manifolds with pinched negative curvature.

An interesting observation is that the results in \cite{LMSV} are stable under \textit{strict rough isometries}
(see Definition~\ref{def: RI} below, and \cite[Theorem~5.4]{LMSV}).   These are rough isometries in
the sense of M.~Kanai~\cite{K} that preserve the exponential rate of the volume growth of balls when the radius tends to infinity.
Rough isometries are also known as quasi-isometries (see, for instance, \cite[Section 7.2]{Gr}), and strict rough isometries are sometimes referred to
as $1$-quasi-isometries in the literature.
This suggests that it may be worth looking for sets of assumptions that are ``stable" under such maps.

{
We explore the stability of the $L^p$ boundedness for $\cM$ and $\cN$ on a class of metric measure spaces much more general than that
of Riemannian manifolds.  Indeed, we consider a pair $X$ and $X'$ of strict quasi-isometric locally doubling {geodesic} spaces enjoying the uniform ball size condition.
We show that $\cM$ and $\cN$ have the same~$L^p$ boundedness properties on $X$ and on $X'$ (see Theorem~\ref{t: quasi iso}).  

As far as stability under specific geometric constructions is concerned, we mention the case of connected sum of manifolds.  Suppose that $M$ and $N$
are Riemannian manifolds where the $L^p$ boundedness properties of either $\cM$ and $\cN$ are known.  It is natural to speculate about
the $L^p$ boundedness properties  of $\cM$ and $\cN$ on the connected sum $M \sharp N$.  We do not address the problem in full generality
and focus on the case where $M$ and $N$ are both $m$-dimensional space form with negative curvature $-a^2$ and $-b^2$.  Even in
this particular case we observe the noteworthy phenomenon that $\cM$ is bounded on $\lp{M\sharp N}$ for all $p>1$ (and it is of weak type $(1,1)$)
as it happens on $M$ and $N$, but there is an abrupt, perhaps surprising, change of $L^p$ boundedness properties as far as the uncentred operator is concerned.
Indeed, it turns out that $\cN$ is bounded on $\lp{M\sharp N}$ if and only if $p=\infty$, whereas~$\cN$ is bounded on $\lp{M}$ and on $\lp{N}$
for all $p>2$ (see Theorem~\ref{t: connected sum} below).

It may be worth to further investigate the problem for different ``summands'' $M$ and $N$.
}

Our paper is organised as follows.  Section~\ref{s: Notation} contains some basic definitions and preliminary results.  Section~\ref{s: Conformal}
is devoted to the study of the $L^p$ boundedness properties of $\cM$ and $\cN$ under a conformal change of variables.
In Section~\ref{s: Rotationally}, we study the $L^p$ bounds of $\cM$ under the quite weak assumption that the manifold has pinched negative \textit{scalar}
curvature and is rotationally symmetric.
Section~\ref{s: Strict} is devoted to the analysis of the robustness of $L^p$ bounds for $\cM$ and $\cN$ under the action of strict rough isometries.   In Section~\ref{s: Connected} we show that the operator $\cM$ on the
connected sum of two space forms with curvatures $-a^2$ and $-b^2$ is of weak type $(1,1)$, whereas the operator $\cN$ is bounded only on $L^\infty$.
Finally in Section~\ref{s: Stromberg} we analyse Str\"omberg's counterexample mentioned above.

We shall use the ``variable constant convention'', and denote by $C$ a constant that may vary from place to
place and may depend on any factor quantified (implicitly or explicitly) before its occurrence, but not on factors quantified afterwards.
The expression
$$
a(t) \asymp B(t) \qquad \forall t \in {\bD},
$$
where ${\bD} $ is some subset of the domains of $A$ and of $B,$ means that there exist (positive) constants $C$ and $C'$ such that
$$
C  A(t) \leq B(t) \leq C' A(t) \qquad \forall t \in {\bD};
$$
$C$ and $C'$ may depend on any quantifiers written {\it before} the displayed formula.

\section{Notation and preliminary results} \label{s: Notation}
Given a positive number $\al$ and a function $f$ on a Riemannian manifold $M$, we set $E_f(\al) := \bigmod{\{x\in M: \bigmod{f(x)} > \al\}}$.
Recall that the $\lorentz{p}{r}{M}$ (quasi-) norm may be defined as follows
$$
\bignorm{f}{\lorentz{p}{r}{M}}
= \Big( r \ioty \bigmod{E_f(\al)}^{r/p} \, \al^{r-1} \wrt \al\Big)^{1/r}
$$
if $1\leq r<\infty$, and $\ds\bignorm{f}{\lorentz{p}{\infty}{M}} = \sup_{\al>0} \, \al \, \bigmod{E_f(\al)}^{1/p}$.
We shall  be primarily concerned with $\lorentz{p}{1}{M}$ and $\lorentz{p}{\infty}{M}$.
For more on Lorentz spaces, see \cite[Chapter~V]{SW}.

Recall that an operator is \textit{of weak type $(p,p)$} if it is bounded from $\lp{M}$
to $\lorentz{p}{\infty}{M}$, and, in the case where $1<p<\infty$, that an operator is \textit{of restricted weak type $(p,p)$} if it is bounded from
$\lorentz{p}{1}{M}$ to $\lorentz{p}{\infty}{M}$.

In this paper, unless otherwise specified, $(M,g)$ will denote a connected noncompact complete Riemannian manifold of dimension $m$.
Suppose that $\kappa > 0$ and that $m$ is an integer $\geq 2$.  We denote by $(M_\kappa,g_\kappa)$ the $m$-dimensional simply connected Riemannian
space form with constant curvature $-\kappa^2$.

We shall need also  the following variant of $\cM_\infty$.  For any positive real number~$a$, we define
$$
\cM_\infty^\om f(x)
:= \sup_{R\geq 1} \, \frac{1}{\bigmod{B_R(x)}^{1/\om}} \, \int_{B_R(x)} \, \mod{f} \wrt V.
$$
The following result is a consequence of the results of Str\"omberg and Ionescu.  For any unexplained notation in the proof below, we refer the reader
to \cite[Section~2]{ADY}.

\begin{proposition} \label{p: endpoint}
Suppose that $\BX=G/K$ (where $G$ is a semisimple Lie group and $K$ is a maximal compact subgroup) is a symmetric space of the noncompact type and of rank one, 
and that $1<\om<2$.  Then the operator $\cM_\infty^\om$ is bounded from $\lorentz{\om}{1}{\BX}$ to $\lorentz{\om}{\infty}{\BX}$
and on $\lp{\BX}$ for all $p$ in $(\om,\om')$, where $\om'$ denotes the index conjugate to $\om$, and it is unbounded on $\lp{\BX}$ whenever $p<\om$ or $p>\om'$.
\end{proposition}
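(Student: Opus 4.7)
My plan is to dominate $\cM_\infty^\om$ pointwise by a single bi-$K$-invariant convolution on $\BX=G/K$ whose kernel lies in weak $L^\om$, to apply Cowling's sharp form of the Kunze--Stein phenomenon \cite{Co}, and to disprove boundedness outside $(\om,\om')$ by exhibiting two explicit radial test functions.

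The first step exploits the homogeneity of $\BX$: the volume $V(R):=\bigmod{B_R(x)}$ depends only on $R$, and in rank one one has $V(R)\asymp e^{QR}$ for $R\ge 1$, with $Q=2\mod{\rho}$. Since the centred averages are bi-$K$-invariant convolutions and $R\mapsto V(R)^{-1/\om}$ is decreasing, a pointwise interchange of supremum and integral gives
$$
\cM_\infty^\om f(x)\le (k_\om*\mod{f})(x),\qquad k_\om(y):=V(\max\{1,d(y,o)\})^{-1/\om},
$$
where $o$ is a fixed basepoint. The exponential growth of $V$ then yields $\bigmod{\{k_\om>\la\}}\asymp\la^{-\om}$ for small $\la$, hence $k_\om\in\lorentz{\om}{\infty}{\BX}$. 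By the sharp Kunze--Stein phenomenon, convolution with the bi-$K$-invariant $k_\om$ maps $\lorentz{\om}{1}{\BX}$ into $\lorentz{\om}{\infty}{\BX}$; since $k_\om$ is radial the convolution is self-adjoint, so a standard duality argument using H\"older's inequality for Lorentz spaces shows that it also maps $\lorentz{\om'}{1}{\BX}$ into $\lorentz{\om'}{\infty}{\BX}$. Marcinkiewicz interpolation between these two restricted weak-type endpoints then delivers strong-type bounds on $\lp{\BX}$ for every $p\in(\om,\om')$, and the pointwise majorisation transfers every such estimate to $\cM_\infty^\om$.

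For the unboundedness I would use two explicit test functions. For $p<\om$ take $f:=\charfn{B_1(o)}\in\lp{\BX}$; a direct estimate gives $\cM_\infty^\om f(x)\gtrsim e^{-Qd(x,o)/\om}$ for $d(x,o)\gg 1$, and integration in polar coordinates yields $\bignorm{\cM_\infty^\om f}{\lp{\BX}}=\infty$. For $p>\om'$ pick $\al\in(Q/p,Q/\om')$ and set $f(x):=e^{-\al d(x,o)}$, so that $f\in\lp{\BX}$; the radial computation
$$
V(R)^{-1/\om}\int_{B_R(o)}e^{-\al d(y,o)}\wrt\mu(y)\asymp e^{R(Q/\om'-\al)}\to\infty\qquad(R\to\infty)
$$
gives $\cM_\infty^\om f(o)=\infty$, and enlarging balls by one shows $\cM_\infty^\om f=\infty$ on $B_1(o)$, so $\cM_\infty^\om f\notin\lp{\BX}$. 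The principal technical obstacle is pinning down the precise form of Cowling's Kunze--Stein inequality that produces the restricted weak type $(\om,\om)$ bound for convolution by a weak-$L^\om$ bi-$K$-invariant kernel; the remaining ingredients are routine consequences of the homogeneity of $\BX$ and its exponential volume growth.
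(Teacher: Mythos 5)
Your argument for the positive result follows the paper's approach essentially verbatim: the kernel $k_\om(y)=V(\max\{1,d(y,o)\})^{-1/\om}$ is exactly the function $\Psi$ the paper constructs, the pointwise domination $\cM_\infty^\om f\le \mod{f}*\Psi$ is the same, and so is the chain through Cowling's sharp Kunze--Stein inequality, the self-adjointness of bi-$K$-invariant convolution, and Marcinkiewicz interpolation between the two restricted weak-type endpoints $(\om,\om)$ and $(\om',\om')$. The unboundedness for $p<\om$ is identical to the paper's, using $\One_{B_1(o)}$.

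The one place where you genuinely deviate is the unboundedness for $p>\om'$. The paper takes $q\in(\om',p)$ and tests against the elementary spherical function $\vp_{i\de(q)\rho}$, computing the spherical Fourier transform $\wt\One_{B_R(o)}(i\de(q)\rho)$ to show that $\vp_{i\de(q)\rho}*\psi_R$ blows up. You instead take the radial test function $f(x)=e^{-\al d(x,o)}$ with $\al\in(Q/p,Q/\om')$ and compute the averages directly from the volume asymptotics $V(R)\asymp e^{QR}$. These are really the same counterexample in disguise, since $\vp_{i\de(q)\rho}(x)\asymp e^{-(Q/q)\mod{x}}$ for $\mod{x}$ large and $Q/q$ ranges over $(Q/p,Q/\om')$ as $q$ ranges over $(\om',p)$; but your version avoids the spherical Fourier machinery entirely and relies only on the radial volume growth, which is a welcome simplification. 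Both routes correctly conclude that $\cM_\infty^\om f$ is infinite on a set of positive measure, hence not in $\lp{\BX}$. Your proposal is correct.
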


\begin{proof}
	The proof is a consequence of the sharp form of the Kunze--Stein phenomenon, proved in \cite[Theorem~1.6]{Co}; see also \cite[Theorem~1]{CMS}
	for the case of trees.  Indeed, fix $\om$ in $(1,2)$, and consider the functions $\ds\psi_R = \frac{\One_{B_R(o)}}{\bigmod{B_R(o)}^{1/\om}}$, and
$$
\Psi(x)
:= \sup_{R\geq 1} \, \psi_R(x)
	= {\min\Big[ \frac{1}{\bigmod{B_1(o)}^{1/\om}},\frac{1}{\bigmod{B_{\mod{x}}(o)}^{1/\om}}\Big]},
\quant x \in \BX.
$$
It is not hard to show that $\Psi$ belongs to $\lorentz{\om}{\infty}{{\BX}}$
and that  $\bignorm{\psi_R}{\lorentz{\om}{1}{\BX}} = 1$ for every $R>0$ .  Now,
$$
f*\psi_R (x)
= \int_G \, f(y) \, \psi_R(y^{-1}x) \wrt y
= \frac{1}{\bigmod{B_R(o)}^{1/\om}} \, \int_{B_R(x)} \, f(y) \wrt y.
$$
It follows that
$$
\cM_\infty^\om f(x) \leq \mod{f}*\Psi(x).
$$
By \cite[Theorem~1.6]{Co}, $\lorentz{\om}{1}{G}*\lorentz{\om}{\infty}{G} \subseteq \lorentz{\om}{\infty}{G}$ 
In particular, if
$f$ is $K$--right-invariant, then so is $f*\Psi$, whence there exists
a constant $C$ such that
$$
\bignorm{\cM_\infty^\om f}{\lorentz{\om}{\infty}{\BX}}
\leq C \, \bignorm{f}{\lorentz{\om}{1}{\BX}}.
$$

Since $\Psi$ is $K$-invariant on $\BX$, and convolution with $\Psi$ is bounded from
$\lorentz{\om}{1}{\BX}$ to $\lorentz{\om}{\infty}{\BX}$, and on $\lp{\BX}$ for all $p$ in $(\om,2]$,
convolution with $\Psi$ is bounded on $\laq{\BX}$ for all $q$ in $[2, \om')$,
as required to conclude the proof of the positive result.

It remains to prove that $\cM_\infty^\om$ is unbounded when $p<\om$ or $p>\om'$.  Fix $p>\om'$ and choose $q$ in the interval $(\om',p)$.
Write $\de(q)$ in place of $\mod{1-2/q}$ for short.  The spherical function $\vp_{i\de(q)\rho}$ is in $\lp{\BX}$ 
	We estimate $\cM_\infty^\om\vp_{i\de(q)\rho}$.
	Preliminarily, we estimate the spherical Fourier transform of $\One_{B_R(o)}$ at the point $i\de(q)\rho$.  Observe that 
$$
\wt\One_{B_R(o)} (i\de(q)\rho)
= \int_{B_R(o)} \vp_{i\de(q)\rho} \wrt \mu.
$$
Recall the estimate $\vp_{i\de(q)\rho} (x) \asymp \e^{-(2/q)\rho \mod{x}}$ when $\mod{x}$
is large.  Then
$$
\int_{B_R(o)} \vp_{i\de(q)\rho} \wrt \mu
\asymp \int_0^R \e^{2 (1-1/q)\rho s} \wrt s
\asymp \e^{2 (1-1/q)\rho R}
$$
as $R$ tends to $\infty$.  Now, observe that
$$
\vp_{i\de(q)\rho}*\psi_R
=\frac{\wt\One_{B_R(o)} (i\de(q)\rho)}{\mu(B_R(o))^{1/\om}} \, \vp_{i\de(q) \rho}
\asymp \e^{2 (1-(1/q)-(1/\om))\rho R} \, \vp_{i\de(q) \rho}.
$$
Notice that $\ds \frac{1}{q}+\frac{1}{\om} < 1$, whence
$$
\cM_\infty^\om \vp_{i\de(q)\rho}
\geq C \,  \sup_{R\geq 1} \, \e^{2 (1-(1/q)-(1/\om))\rho R} \, \vp_{i\de(q) \rho}
= \infty
$$
for every $x$ large enough.  Thus, $\cM_\infty^\om$ is unbounded on $\lp{\BX}$ as required.

Now, suppose that $p<\om$.  We shall show that $\cM_\infty^\om \One_{B_1(o)}$
is not in $\lp{\BX}$.  For~$\mod{x}$ large,
$$
\cM_\infty^\om \One_{B_1(o)} (x)
=     \mu\big(B_1(o)\big)\, \mu\big(B_{\mod{x}+1}(x)\big)^{-1/\om}
\asymp \e^{-(2\rho/\om) \mod{x}},
$$
whence
$$
\bignormto{\cM_\infty^\om \One_{B_1(o)}}{\lp{\BX}}{p}
\leq C \, \int_0^\infty \e^{-2\rho(1-p/\om) s} \wrt s,
$$
which is equal to $\infty$, because $p<\om$, as required.
\end{proof}

\section{Conformal case} \label{s: Conformal}

In this section we consider the problem of the ``stability" of the $L^p$ boundedness properties of the maximal operators $\cM$ and $\cN$ on
Riemannian manifolds under perturbations of the metric.  An interesting case arise in connection to conformal changes of the metric.
It may be worth observing that multiplying a metric by a conformal factor may affect the bounds of the sectional curvatures significantly.  In particular,
the perturbed metric may very well have unbounded sectional curvature even though the original metric is assumed to have pinched negative curvature.

Suppose that $(M,g_1)$ is an $m$-dimensional complete Riemannian manifold.
Suppose that $\la$ is a smooth positive function on $M$, and consider the metric $g := \la^2 \, g_1$ on $M$.  Denote by $\mu_1$ and $\mu$
the Riemannian measures associated to $g_1$ and $g$, respectively.  For $x$ in~$M$ and $R>0$ denote by $B_R(x)$ and $B_R^1(x)$ the geodesic balls
with centre $x$ and radius $R$ with respect to $g$ and~$g_1$, respectively.

The next result extends and generalises Str\"omberg's result \cite[p.~126]{Str} mentioned in the introduction.

\begin{theorem} \label{t: conformal}
	Consider the Riemannian manifold $(M,g_1)$ as above.  Assume that the conformal factor $\la$ satisfies the
	estimate $1\leq \la\leq \om$ for some real number $\om$, and that there exists a positive constant $c$ such that
	\begin{equation} \label{f: lower mult}
	\mu_1 \big(B^1_{R/\om}(x)\big)
	\geq c\, \mu_1\big(B^1_{R}(x)\big)^{1/\om}
	\quant x \in M \quant R\geq 1.
\end{equation}
The following hold:
	\begin{enumerate}
		\item[\itemno1]
			if $\cM_\infty^\om$ is bounded on $\lp{\mu_1}$ [resp. it restricted weak type $(p,p)$ for some $p>1$, resp. it is of weak type $(1,1)$],
			then~$\cM$ is bounded on $L^p(\mu)$ [resp. it is of restricted weak type $p$ for some $p>1$, resp. of weak type $(1,1)$];
		\item[\itemno2]
			if $a\leq b <2a$, $\om = b/a$, $M$ {is a symmetric space of the noncompact type and of rank one,} 
			and $g_1$ {denotes the standard $G$ invariant metric}, then $\cM$ is bounded on $\lp{\mu}$ for all $p>\om$ and
			it is of restricted weak type $(\om,\om)$.
	\end{enumerate}
\end{theorem}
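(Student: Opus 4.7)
My plan for \rmi\ is to dominate $\cM$ on $(M,g)$ pointwise by a sum of two operators attached to $(M,g_1)$, by comparing balls and measures on the two conformally equivalent spaces. Since $1\leq \la\leq \om$ one has $g_1\leq g\leq \om^2 g_1$, hence $d_1\leq d\leq \om\, d_1$, which yields the inclusions $B^1_{R/\om}(x)\subseteq B_R(x)\subseteq B^1_R(x)$ for every $R>0$, as well as $\mu_1\leq \mu\leq \om^m \mu_1$ where $m=\dim M$.

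Fixing $x\in M$ and $R>0$, these comparisons at once give
\[
\frac{1}{\mu(B_R(x))} \int_{B_R(x)} \mod{f} \wrt\mu
\leq \frac{\om^m}{\mu_1(B^1_{R/\om}(x))} \int_{B^1_R(x)} \mod{f} \wrt\mu_1.
\]
When $R\geq 1$, I apply \eqref{f: lower mult} to the denominator on the right-hand side and bound the average by $(\om^m/c)\,\cM_\infty^\om(\mod{f})(x)$; when $R\leq 1$, I control the ratio $\mu_1(B^1_R(x))/\mu_1(B^1_{R/\om}(x))$ uniformly using the local doubling of $\mu_1$ that is part of the paper's standing framework, bounding the average by $C\,\cM_0^{g_1}(\mod{f})(x)$. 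Summing the two regimes yields the pointwise estimate $\cM f\leq C\,(\cM_0^{g_1}+\cM_\infty^\om)(\mod{f})$. Because $\mu$ and $\mu_1$ are uniformly equivalent, the norms on $\lp{\mu}$ and $\lp{\mu_1}$ are equivalent as well; since $\cM_0^{g_1}$ is of weak type $(1,1)$ and bounded on $\lp{\mu_1}$ for every $p>1$ by the standard localisation argument recalled in the introduction, any $\lp{\mu_1}$, weak-$(1,1)$, or restricted weak-$(p,p)$ bound for $\cM_\infty^\om$ transfers to the corresponding bound for $\cM$ on $\lp{\mu}$, proving \rmi.

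For \rmii\ I verify \eqref{f: lower mult} on a rank-one symmetric space with its $G$-invariant metric. By homogeneity, $\mu_1(B^1_R(x))$ is independent of $x$, and the classical asymptotic behaviour $\mu_1(B^1_R(x))\asymp \e^{2\rho R}$ for $R\geq 1$ gives $\mu_1(B^1_{R/\om}(x))\asymp \e^{2\rho R/\om}\asymp \mu_1(B^1_R(x))^{1/\om}$, so \eqref{f: lower mult} holds. The assumption $a\leq b<2a$ forces $\om=b/a\in [1,2)$; when $\om=1$ the conclusion is just Str\"omberg's theorem, while for $\om>1$ Proposition \ref{p: endpoint} supplies restricted weak type $(\om,\om)$ for $\cM_\infty^\om$, which via \rmi\ transfers to restricted weak type $(\om,\om)$ for $\cM$ on $\lp{\mu}$; interpolating this endpoint against the trivial $L^\infty$ bound through the Marcinkiewicz theorem then yields $\lp{\mu}$-boundedness for every $p>\om$. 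The most delicate step is the small-scale comparison, where reduction to $\cM_\infty^\om$ is impossible and one must instead invoke the local doubling of $(M,g_1,\mu_1)$; the large-scale analysis is essentially algebraic once one writes down the inclusions together with the lower bound \eqref{f: lower mult}.
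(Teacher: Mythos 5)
Your proof is correct and follows essentially the same route as the paper's: the heart of the matter in both is the chain of comparisons $B^1_{R/\om}(x)\subseteq B_R(x)\subseteq B^1_R(x)$, $\mu_1\leq\mu\leq\om^m\mu_1$, the inclusion $B^\om_R(x)=B^1_{R/\om}(x)$, and hypothesis \eqref{f: lower mult}, culminating in a pointwise bound of the large-scale centred maximal function on $(M,g)$ by $\cM_\infty^\om$ on $(M,g_1,\mu_1)$, followed by the localisation argument for small scales. The one cosmetic difference is that the paper conjugates by the isometric isomorphism $\cI_p f = \la^{m/p} f$ between $\lp{\mu}$ and $\lp{\mu_1}$ and derives the estimate $\cI_p\cM_\infty f\leq C\,\cM_\infty^\om(\cI_p f)$, whereas you forgo $\cI_p$ and directly use the two-sided comparability $\mu_1\leq\mu\leq\om^m\mu_1$ to pass between the two $\lp{}$ (and Lorentz) norms; both yield the same conclusion up to a fixed power of $\om$ in the operator-norm constant.
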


\begin{remark} \label{t: exponential}
We note that condition  \eqref{f: lower mult} is guaranteed assuming a purely exponential volume growth, namely,
\[
	C^{-1} \exp(c R) \leq |B_R|\leq C \exp(c R),\ C>1,\ c>0.
 \]	
\end{remark} 

\begin{proof}
{ Clearly, only the case where $\omega>1$ requires proof.}	Notice  also that \rmii\ follows directly from \rmi\ and Proposition~\ref{p: endpoint}.

	Thus, it remains to prove \rmi.  First we consider the strong type estimate.  Observe that $(M,g)$ is a complete Riemannian manifold.
	Define $\cI_{p}: \lp{\mu} \to \lp{\mu_1}$ by
	$$
	\cI_{p}f
	= \la^{m/p}\, f
	\quant f \in \lp{\mu}.
	$$
	Since $\wrt \mu = \la^m \wrt \mu_1$, the operator $\cI_{p}$ is an isometric isomorphism between $\lp{\mu}$ and $\lp{\mu_1}$.
	Observe that
	\begin{align*}
	\frac{\la(x)^{m/p}}{\mu\big(B_R(x)\big)}\, \int_{B_R(x)} \mod{f} \wrt \mu
	& =    \frac{\lambda(x)^{m/p}}{\mu\big(B_R(x)\big)}\, \int_{B_R(x)} \la^{m/p'} \, \mod{\cI_{p}f} \wrt \mu_1 \\
	& \leq  \frac{\om^m}{\mu\big(B_R(x)\big)}\, \int_{B_R(x)} \mod{\cI_{p}f} \wrt \mu_1.
	\end{align*}
	Define
	$$
	g_\om := \om^2\, g_1,
	$$
	and denote by $B_R^\om(x)$ the geodesic ball with centre $x$ and radius $R$ with respect to the metric $g_\om$.
	Notice that $g=\la^2\, g_1\leq g_\om$ in the sense of quadratic forms, whence $B^\om_R(x)\subset B_R(x)$.  Furthermore the Riemannian measure $\mu_\om$
	associated to $g_\om$ is just $\om^m\, \mu_1$.  Now,
	\[
	\mu\big(B_R(x)\big)
	\geq \mu\big(B^\om_R(x)\big)
	\geq \mu_1\big(B^\om_R(x)\big),
	\]
	where the last inequality follows from the assumption $\la \geq 1$, which implies that $\mu\geq \mu_1$.
	Denote by $d_\om$ and $d_1$ the distances associated to $g_\om$ and $g_1$, respectively.  Notice that $B^\om_R(x)=B^1_{R/\om}(x)$, because
	\[
	d_\om(x,y)=\om\, d_1(x,y)\quant (x,y)\in M\times M.
	\]
	Then
	\[
	\mu_1\big(B^\om_R(x)\big)
	= \mu_1 \big(B^1_{R/\om}(x)\big)
	\geq c\, \mu_1\big(B^1_{R}(x)\big)^{1/\om};
	\]
	the inequality above follows from \eqref{f: lower mult}.  Altogether, we find that
	$$
	\frac{\la(x)^{m/p}}{\mu\big(B_R(x)\big)}\, \int_{B_R(x)} \mod{f} \wrt \mu
	\leq \frac{\om^m}{c\,\mu_1\big(B_{R}^1(x)\big)^{1/\om}}\, \int_{B_R^1(x)} \mod{\cI_{p}f} \wrt \mu_1;
	$$
	the trivial containment $B_R(x)\subseteq B_R^1(x)$ has been used in the last inequality.
	By taking the supremum over $R\geq 1$ of both sides, we obtain that
	\begin{equation} \label{f: comparison maximal}
	\big(\cI_{p} \cM_\infty f\big) (x)
	\leq C \, \cM_\infty^{\om} \big(\cI_{p}f\big)(x),
	\end{equation}
	where the maximal function on the left hand side is with respect to the metric $g$, and that on the right hand side is with respect to the metric $g_1$.  Hence
	$$
	\bignorm{\cM_\infty f}{\lp{\mu}}
	=   \bignorm{\cI_{p} \cM_\infty f}{\lp{\mu_1}}
	\leq C \,\bignorm{\cM_\infty^{\om}\big(\cI_{p}f\big)}{\lp{\mu_1}}.
	$$
	The assumption that $\cM_\infty^{\om}$ is bounded on $\lp{\mu_1}$ and the fact that $\cI_p$ is an isometry between $\lp{\mu}$ and $\lp{\mu_1}$ imply that
	$$
	\bignorm{\cM_\infty^{\om}\big(\cI_{p}f\big)}{\lp{\mu_1}}
	\leq C \,\bignorm{\cI_{p}f}{\lp{\mu_1}}
	=    C \, \bignorm{f}{\lp{\mu}}.
	$$
	Combining the estimates above yields
	$\bignorm{\cM_\infty f}{\lp{\mu}}\leq C \,\bignorm{f}{\lp{\mu}}$, as required.

	The endpoint results follows by a similar argument.  Here are the details in the case where we assume a restricted weak type estimate on $(M,g_1)$.
	It is straightforward to check that
	$$
	\bignorm{g}{\lorentz{p}{1}{\mu_1}} 	
	\leq \bignorm{g}{\lorentz{p}{1}{\mu}} 	
	\leq \om^{m/p}\, \bignorm{g}{\lorentz{p}{1}{\mu_1}},
	$$
	and that a similar estimate holds for weak-$L^p$ (quasi-) norms.
	Now, from this consideration, the bounds of the conformal factor $\la$
	and \eqref{f: comparison maximal} we get that
	$$
	\begin{aligned}
	\bignorm{\cM_\infty f}{\lorentz{p}{\infty}{\mu}}
		& \leq \bignorm{\cI_p\cM_\infty f}{\lorentz{p}{\infty}{\mu}} \\
		& \leq \om^{m/p} \, \bignorm{\cI_p\cM_\infty f}{\lorentz{p}{\infty}{\mu_1}} \\
		& \leq C\, \om^{m/p} \, \bignorm{\cM_\infty^{{\om}} \cI_pf}{\lorentz{p}{\infty}{\mu_1}} \\
		& \leq C\, \om^{m/p} \, \bigopnorm{\cM_\infty^{{\om}} }{\lorentz{p}{1}{\mu_1};\lorentz{p}{\infty}{\mu_1}} \bignorm{\cI_pf}{\lorentz{p}{1}{\mu_1}} \\
		& \leq C\, \om^{2m/p} \, \bigopnorm{\cM_\infty^{{\om}} }{\lorentz{p}{1}{\mu_1};\lorentz{p}{\infty}{\mu_1}} \bignorm{f}{\lorentz{p}{1}{\mu_1}},
	\end{aligned}
	$$
	as required.
\end{proof}

{In the next proposition we show that the pointwise bound $1\leq \la \leq \om$ appearing in the statement of Theorem~\ref{t: conformal} 
can be derived from appropriate geometric assumptions on the original manifold $(M,g_1)$ and the conformally equivalent metric $\la^2\, g_1$.

\begin{proposition} \label{p: conformal}
Let $a,b,\omega$ be positive constant such that $0<a\le b \le \omega a$. Suppose that $(M,g_1)$ is an $m$-dimensional complete Riemannian manifold with $m\geq 3$, that $\la$ is a smooth positive function on $M$, and consider the metric 
$g := \la^2 \, g_1$ on $M$.  If
\begin{equation}\label{eq:lower Ricci}
	-(m-1) \omega^2a^2 g_1\leq \operatorname{Ric_{g_1}} \leq -(m-1) b^2 g_1,
\end{equation}
the scalar curvature satisfies the bounds 
\begin{equation}\label{eq:scalar}
-m(m-1)b^{2} \leq 
	\operatorname{Scal}_{ g} \leq -m(m-1) a^{2}
\end{equation}
and either
\begin{equation}\label{eq:condition1}
 \inf_{M} \la >0
\end{equation}
or
\begin{equation}\label{eq:condition2}
	(M,g)  \text{ is complete with } \operatorname{Ric}_{g} \geq -(m-1)b^{2},
\end{equation}
then the conformal factor $\la$ satisfies the pointwise bound
\begin{equation}\label{eq:estimate-conf}
 1 \leq \lambda\leq \omega.
\end{equation}
\end{proposition}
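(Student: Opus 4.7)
\emph{Plan.} My strategy is to recast the hypothesis into a Yamabe-type semilinear PDE for the conformal factor, and then to derive the pointwise bounds from a combination of Keller--Osserman a~priori estimates and the Omori--Yau weak maximum principle, applied on whichever of the two metrics has the required completeness and Ricci-lower-bound properties. Introduce $u := \lambda^{(m-2)/2}$. The classical formula for the scalar curvature under a conformal change gives
\[
\frac{4(m-1)}{m-2}\,\Delta_{g_1} u \;=\; \mathrm{Scal}_{g_1}\,u \;-\; \mathrm{Scal}_g \, u^{(m+2)/(m-2)}.
\]
Tracing \eqref{eq:lower Ricci} yields $-m(m-1)\omega^{2}a^{2}\le \mathrm{Scal}_{g_1}\le -m(m-1)b^{2}$, and \eqref{eq:scalar} is given. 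Inserting the extremal values on each side, one obtains on $(M,g_1)$ the two one-sided inequalities
\[
\Delta_{g_1} u \;\geq\; \tfrac{m(m-2)a^{2}}{4}\,u\,\bigl(u^{4/(m-2)}-\omega^{2}\bigr),\qquad
\Delta_{g_1} u \;\leq\; \tfrac{m(m-2)b^{2}}{4}\,u\,\bigl(u^{4/(m-2)}-1\bigr),
\]
and the whole proof reduces to extracting the bounds $\omega^{(m-2)/2}\geq u\geq 1$ from these two inequalities.

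For the upper bound $\lambda\leq \omega$, I would work on $(M,g_1)$, which is complete with $\mathrm{Ric}_{g_1}$ bounded below by \eqref{eq:lower Ricci}, so the Omori--Yau weak maximum principle holds for $\Delta_{g_1}$ on any function bounded from above. The right-hand side of the first inequality is, for $m\ge 3$, superlinear of order $u^{m/(m-2)}$ and satisfies the Keller--Osserman integrability $\int^{\infty}dt/\sqrt{F(t)}<\infty$, whence a standard Keller--Osserman argument on complete manifolds with Ricci bounded below forces $u^{*}:=\sup_{M}u<\infty$. Applying Omori--Yau one picks $x_{n}$ with $u(x_{n})\to u^{*}$ and $\Delta_{g_1} u(x_{n})\leq 1/n$; letting $n\to\infty$ in the first inequality gives $0\geq \tfrac{m(m-2)a^{2}}{4}\,u^{*}\bigl((u^{*})^{4/(m-2)}-\omega^{2}\bigr)$, forcing $u^{*}\leq \omega^{(m-2)/2}$, i.e.\ $\lambda\leq \omega$.

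For the lower bound $\lambda\geq 1$ I would argue at the infimum, with the choice of ambient metric dictated by which of \eqref{eq:condition1}, \eqref{eq:condition2} is assumed. Under \eqref{eq:condition1}, $u$ is already bounded below by a positive constant, hence $-u$ is bounded above; Omori--Yau on $(M,g_1)$ applied to $-u$ produces $x_{n}$ with $u(x_{n})\to u_{*}:=\inf u$ and $\Delta_{g_1}u(x_{n})\geq -1/n$, and feeding this into the second inequality, the sign structure $u_{*}^{4/(m-2)}-1<0$ if $u_{*}<1$ yields the contradiction $0\leq \tfrac{m(m-2)b^{2}}{4}u_{*}\bigl(u_{*}^{4/(m-2)}-1\bigr)<0$. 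Under \eqref{eq:condition2}, I would instead work on $(M,g)$ with $\tilde u := u^{-1}=\lambda^{-(m-2)/2}$: the conformal formula with the roles of $g_1,g$ swapped produces the symmetric inequality $\Delta_{g}\tilde u\geq \tfrac{m(m-2)b^{2}}{4}\,\tilde u\bigl(\tilde u^{4/(m-2)}-1\bigr)$, and since $\mathrm{Ric}_{g}\geq -(m-1)b^{2}$ on the complete manifold $(M,g)$, Keller--Osserman plus Omori--Yau (exactly as in the previous paragraph) give $\sup_{M}\tilde u\leq 1$, i.e.\ again $\lambda\geq 1$. The main technical obstacle is the Keller--Osserman a-priori boundedness of $u$ (resp.\ $\tilde u$): without this, the weak maximum principle cannot be triggered and the argument collapses; it is precisely here that completeness of the chosen ambient metric together with the Ricci lower bound is used in an essential way, and that the hypothesis $m\geq 3$ enters through the integrability exponent.
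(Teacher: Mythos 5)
Your proposal is correct and follows essentially the same route as the paper's own proof: recast the hypotheses as a Yamabe equation for $u=\lambda^{(m-2)/2}$, derive the two one-sided semilinear differential inequalities from the Ricci and scalar curvature bounds, invoke a Keller--Osserman/Cheng--Yau type a~priori estimate to get $\sup u<\infty$, then apply the (weak or full) Omori--Yau maximum principle to extract $f(u^{*})\le 0$ and, for the lower bound, either the minimum principle at infinity under \eqref{eq:condition1} or the same argument on $(M,g)$ for $\tilde u=\lambda^{-(m-2)/2}$ under \eqref{eq:condition2}. The only (cosmetic) difference is that you write out the symmetric inequality for $\tilde u$ explicitly whereas the paper simply says to repeat the first part of the argument.
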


\begin{proof}
	Set $\la = u^{2/(m-2)}$.  Then $u$ satisfies the Yamabe equation
	\begin{equation}\label{eq:yamabe}
	c_{m}  \Delta u  = \operatorname{Scal}_{g_1} u - \operatorname{Scal}_{g} u^{(m+2)/(m-2)} 
	\end{equation}
	The upper scalar curvature estimate
	\[
	\operatorname{Scal}_{g} \leq -m(m-1)a^{2}
	\]
	and the lower Ricci curvature estimate \eqref{eq:lower Ricci} imply that
	\begin{equation}
 	\Delta u \geq f(u) := c'_{m} \, \big(-\omega^2a^2 u +a^2 u^{(m+2)/(m-2)}\big).
	\end{equation}
	Since, by the Ricci curvature condition, $M$ satisfies the full Omori-Yau maximum principle and  $f(t) \approx t^{\si}$ as $t \to + \infty$ for
	some $\si >1$, the Cheng-Yau and Motomiya a priori estimates yield $\sup_{M}u = u^{\ast} < +\infty$ and $f(u^{\ast}) \leq 0$; see e.g. 
	\cite[Theorem 1.31 and Example 1.13]{PRS}.  This gives the upper estimate of the conformal factor:
	\[
	\la^{2} = u^{4/(m-2)}  \leq \frac{\omega^2a^2}{a^{2}}= \omega^2.
	\]
	In order to get the lower estimate of the conformal factor, we start by noting that, thanks to the assumption
	\[
	\operatorname{Scal}_{g}  \geq -m(m-1)b^{2}
	\]
	and the upper Ricci curvature estimate \eqref{eq:lower Ricci} 
	it holds
	\begin{equation}
	\Delta u \leq f(u) := c'_{m} \, \big(-b^2 u +b^2 u^{(m+2)/(m-2)}\big).
	\end{equation}
	Thus, if we  assume  that \eqref{eq:condition1} is met, using the weak Omori-Yau in the form of a minimum principle at infinity we obtain that 
	$\inf_{M} u = u_{\ast} >0$ satisfies $f(u_{\ast}) \geq 0$. It then follows that
	\[
	\lambda^{2} = u^{4/(m-2)} \geq \frac{b^2}{b^{2}}= 1.
	\]
	If instead we assume the validity of \eqref{eq:condition2} then the complete manifold $(M,g)$ enjoys the full Omori-Yau maximum principle. 
	Therefore we can apply the first part of the proof to the conformal factor $\la^{-1}= u_{1}^{2/(m-2)} $ and get an upper estimate for 
	$u_{1}$ that implies the desired conclusion.
\end{proof}

\begin{remark}
	The assumptions \eqref{eq:lower Ricci} and \eqref{eq:scalar} are not incompatible. Namely, given a complete manifold $(M,g_1)$ satisfying \eqref{eq:lower Ricci}, there exists a positive solution of \eqref{eq:yamabe}, and thus a conformal deformation $g=\lambda^2 g_1$ of $g_1$ which verifies \eqref{eq:scalar}; see \cite[Theorem 6.15]{MRS}.
	
	An example of application of Proposition \ref{p: conformal} is the following. Suppose that $a\leq b <2a$, $\om = b/a$, and let $M$ be a symmetric space of the noncompact type and of rank one, 
	with $g_1$ the standard $G$ invariant metric. $(M,g_1)$ is Einstein, and up to rescaling we can suppose that $\operatorname{Ric}_{g_1} =-(m-1)b^2 g_1$. Then, given any conformal deformation $(M,g)$ of $(M,g_1)$, $g_1=\lambda g^2$ such that
	\begin{equation*}
		-m(m-1)b^{2} \leq 
		\operatorname{Scal}_{ g} \leq -m(m-1) \omega^{-2}b^{2},
	\end{equation*}
	if either \eqref{eq:condition1} or \eqref{eq:condition2} are satisfied then $1\le\lambda\le\omega$ and by Theorem \eqref{t: conformal} 
$\cM$ is bounded on $\lp{\mu}$ for all $p>\om$ and
	it is of restricted weak type $(\om,\om)$.
\end{remark}
}

\section{Rotationally symmetric manifolds with pinched negative scalar curvature} \label{s: Rotationally}

We say that a complete noncompact $m$-dimensional Riemannian manifold $(M,g)$ is a \textit{model manifold} if $M$ is diffeomorphic to $\BR^n$
and its metric $g$ written in polar cordinates $(t,u)$ (here $t$ positive and $u$ belongs to $\BS^{m-1}$) is of the form
\begin{equation} \label{f: metric rotationally}
g = \wrt t^2+ j(t)^2\,g_{\BS^{m-1}}(u).
\end{equation}
The warping function $j$ is assumed to be smooth on $[0,\infty)$ and strictly positive on $(0,\infty)$, and to satisfy the conditions
\begin{equation} \label{f: metric rotationally II}
j'(0)=1,\qquad\mathrm{and}\qquad j^{(2k)}(0)=0 \quant k\in\BN.
\end{equation}
In particular, we see that $j(t) \sim t$ as $t$ tends to $0$.

\begin{theorem} \label{t: scalar}
Suppose that $m\geq 3$ and that $M$ is a $m$-dimensional complete noncompact model manifold with pinched negative scalar curvature, i.e.
\[
-m(m-1)\, b^2\leq \mathrm{Scal}_g \leq - m(m-1)\, a^2,
\]
for some positive numbers $a$ and $b$ such that $a<b<2a$.
Then $\cM$ is bounded on $\lp{M}$ for all $p>b/a$, and it is bounded from $\lorentz{b/a}{1}{M}$ to $\lorentz{b/a}{\infty}{M}$.
\end{theorem}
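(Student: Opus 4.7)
The plan is to reduce to Theorem~\ref{t: conformal}(ii) by realising $(M,g)$ as a conformal deformation of the hyperbolic space $(\BR^m, g_b)$ of constant sectional curvature $-b^2$, which is the $m$-dimensional rank-one symmetric space of the noncompact type. Since $(\BR^m, g_b)$ has purely exponential volume growth of rate $(m-1)b$, the uniform lower bound \eqref{f: lower mult} is automatic from Remark~\ref{t: exponential}. The core task is therefore to exhibit a conformal identification $g = \lambda^2 g_b$ with conformal factor satisfying $1 \leq \lambda \leq b/a =: \omega$.

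First I would construct the conformal identification using the rotational symmetry. Writing $g = dt^2 + j(t)^2 g_{\BS^{m-1}}$ and $g_b = dr^2 + (\sinh(br)/b)^2 g_{\BS^{m-1}}$, I seek a radial diffeomorphism $\Phi(r,u) = (\phi(r), u)$ of $\BR^m$ with $\Phi^* g = \lambda(r)^2 g_b$. The conformality condition reduces to the singular initial-value problem
$$\phi'(r)\, \sinh(br) = b\, j(\phi(r)), \qquad \phi(0) = 0,$$
with $\lambda = \phi'$; this admits a one-parameter family of smooth positive increasing solutions indexed by the initial value $\lambda(0) = \phi'(0) > 0$, so the free parameter will be chosen so as to match the expected pointwise bound.

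Next I would establish $1 \leq \lambda \leq b/a$ by applying Proposition~\ref{p: conformal} with $g_1 = g_b$ and $\omega = b/a$. With this choice, both inequalities in the Ricci hypothesis \eqref{eq:lower Ricci} collapse to the identity $\mathrm{Ric}_{g_b} = -(m-1)b^2 g_b$, and the scalar pinching \eqref{eq:scalar} is exactly the hypothesis of the theorem. The remaining side condition --- either $\inf \lambda > 0$ or $(M,g)$ complete with $\mathrm{Ric}_g \geq -(m-1)b^2$ --- is the delicate point, as a scalar curvature lower bound does not in general imply a Ricci lower bound. In the radial setting, however, one can verify $\inf \lambda > 0$ directly for an appropriate choice of initial datum; alternatively, one may bypass Proposition~\ref{p: conformal} and work straight from the Yamabe equation, which for $u := \lambda^{(m-2)/2}$ becomes the ODE
$$c_m\bigl(u'' + (m-1) b \coth(br)\, u'\bigr) = -m(m-1)b^2\, u \;-\; \mathrm{Scal}_g \cdot u^{(m+2)/(m-2)}, \qquad c_m = \tfrac{4(m-1)}{m-2},$$
and apply an elementary ODE maximum/shooting argument using the pinching of $\mathrm{Scal}_g$.

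The main obstacle is precisely this ODE analysis: selecting the correct initial condition $\lambda(0) \in [1, b/a]$ so that the resulting solution remains in the target interval on all of $(0,\infty)$, or equivalently, verifying the side condition needed by Proposition~\ref{p: conformal}. Once the pointwise bound $1 \leq \lambda \leq b/a$ is established, Theorem~\ref{t: conformal}(ii) yields at once that $\cM$ is bounded on $\lp{M}$ for every $p > b/a$ and is of restricted weak type $(b/a, b/a)$, completing the proof.
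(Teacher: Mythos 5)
Your high-level strategy — realise $(M,g)$ as a conformal deformation $\lambda^2 g_b$ of the space form with curvature $-b^2$, establish $1\le\lambda\le b/a$, and invoke Theorem~\ref{t: conformal}(ii) — is indeed the paper's plan. But the heart of the theorem is precisely the pointwise bound on the conformal factor, and your proposal acknowledges it as ``the main obstacle'' without supplying it; as written, the argument has a genuine gap exactly where the work lies.

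Two concrete problems. First, in the singular IVP $\phi'(r)\sinh(br)=b\,j(\phi(r))$, $\phi(0)=0$, the initial slope $\phi'(0)$ is \emph{not} a free parameter that you may tune to force $1\le\lambda\le b/a$: requiring $\phi$ to be a diffeomorphism of $(0,\infty)$ onto $(0,\infty)$ pins it down uniquely (via $\int_c^\infty ds/j(s)=b\int_{r_0}^\infty dr/\sinh(br)$), so ``choosing the initial datum so the solution stays in the target interval'' is not an available move. Moreover, this identity already presupposes $\int^\infty 1/j<\infty$, a fact that is not automatic and which the paper must first establish — it does so by contradiction, invoking a nonexistence theorem of Ni for conformal metrics, using the upper scalar curvature bound. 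Second, the route through Proposition~\ref{p: conformal} requires one of the side conditions $\inf_M\lambda>0$ or $\operatorname{Ric}_g\ge -(m-1)b^2 g$; a lower bound on $\mathrm{Scal}_g$ gives neither, and you correctly flag this as delicate but leave it unresolved. The paper does not go through Proposition~\ref{p: conformal} at all. Instead, after proving integrability of $1/j$, it identifies $(M,g)$ with $(\BB_1,\lambda^2 g_{Eucl})$, passes to $\Lambda=\lambda^{(m-2)/2}$ and the Euclidean Yamabe equation, and runs an Ahlfors--Schwarz comparison against the explicit barriers $\La_{c,R}(\rho)=\big(\tfrac{2}{c}\tfrac{R}{R^2-\rho^2}\big)^{(m-2)/2}$ (with $R\to 1^\mp$), obtaining $\tfrac{2}{b(1-\rho^2)}\le\lambda\le\tfrac{2}{a(1-\rho^2)}$ and hence $1\le\lambda/\lambda_b\le b/a$ after rewriting $(\BB_1,\lambda_b^2 g_{Eucl})\cong(M_b,g_b)$. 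Your sketch of ``an elementary ODE maximum/shooting argument'' is in the right spirit, but this is the substance of the proof, not a routine verification, and it would need to be carried out (together with the preliminary integrability of $1/j$) for the proposal to stand.
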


\begin{proof}
	Consider the metric $g$ of $M$ in polar form \eqref{f: metric rotationally}, with $j$ satisfying \eqref{f: metric rotationally II}.

	We \textit{claim} that $1/j$ is integrable in a neighbourood of $+\infty$.
	We argue by contradiction.  Suppose that $1/j$ is nonintegrable in a neighbourhood of~$+\infty$.  Define
	\[
	\phi(t):= \exp\Big(\int_{1}^t \frac{dq}{j(q)} \Big)
	\quant t \in (0,\infty).
	\]
	Clearly $\phi$ is a smooth, strictly increasing function on $(0,+\infty)$.  Furthermore~$\phi$ diverges to infinity as $t$ tends to $+\infty$
	and tends to $0$ as $t$ tends to $0$ (because $1/j$ is nonintegrable in a right neighbourhood of $0$).  Thus, $\phi$ is a diffeomorphism of $(0,\infty)$.
	Note that
	\[
	\phi(t)^2
	= \phi'(t)^2\, j(t)^2,
	\]
	so that
	\begin{align*}
	\wrt t^2+ j(t)^2\, g_{\BS^{m-1}}(u)
	& = \frac{1}{\phi'(t)^2} \left[\phi'(t)^2 \wrt t^2 + \phi(t)^2 \, g_{\BS^{m-1}}(u) \right] \\
	& = \frac{1}{\phi'\big(\phi^{-1}(\rho)\big)^2}\left[\wrt \rho^2 + \rho^2 \, g_{\BS^{m-1}}(u) \right];
	\end{align*}
	the last equality follows from the change of variable $\rho:=\phi(t)$.  Thus, $(M,g)$ is isometric to $(\BR^m, {\la_0}^2\, g_{Eucl})$ with conformal factor
	given by
	\[
	{\la_0}(\rho,u)
	:= \frac{1}{\phi'\big(\phi^{-1}(\rho)\big)}.
	\]
	Now, a nonexistence result for conformal metrics due to Ni \cite[Theorem 6.4]{Ni} contradicts the assumption that $\mathrm{Scal}_g\leq -m(m-1)\, a^2<0$,
	and the claim is proved.

	Next, set $A:= \exp\big[-\int_1^\infty (1/j) \wrt m\big]$, where $m$ denotes the Lebesgue measure, and observe that
	$A\, \phi(t) = \exp\big[-\int_{t}^\infty (1/j) \wrt m\big]$.
	Clearly $A\, \phi$ is a diffeomorphism between $(0,+\infty)$ onto $(0,1)$.  Set $\psi := A \, \phi$.  By arguing much as above, the change of variables
	$\rho:=\psi(t)$ yields
	\begin{align*}
	\wrt t^2+ j(t)^2\, g_{\BS^{m-1}}(u)
	& = \frac{1}{\psi'\big(\psi^{-1}(\rho)\big)^2}\left[\wrt \rho^2 + \rho^2 \, g_{\BS^{m-1}}(u) \right].
	\end{align*}
	Thus, $(M,g)$ is isometric to $\big(\BB_1, { \lambda}^2 g_{Eucl}\big)$ with conformal factor
	\[
{ \lambda}(\rho,u)=\frac{1}{\psi'\big(\psi^{-1}(\rho)\big)}.
	\]
	Here and on, $\BB_R$ denotes the $m$-dimensional Euclidean ball with center the origin and of radius $R$.
	We adapt to our assumptions the idea of the proof of \cite[Theorem A]{Ahl}. Define the function
	$\La:\BB_1 \to \BR$ by $\La(\rho,u):= {\lambda(\rho,u)^{(m-2)/2}}$. By the conformal change rule for the scalar curvature,
	we have that $\La$ satisfies the Yamabe equation
	\[
		\Delta \La = -\frac{m-2}{4(m-1)}\, \mathrm{Scal}_g \, \La^{\nu(m)}.
	\]
	where $\ds \nu(m) := \frac{m+2}{m-2}$.
	For any $R>0$ and $c>0$, denote by $\La_{c,R}$ the rotationally symmetric function on $\BB_R$, defined by
	\[
	\La_{c,R}(\rho, u)
	= \Big(\frac{2}{c}\, \frac{R}{R^2-\rho^2}\Big)^{(m-2)/2}.
	\]
	A standard computation  in polar co-ordinates shows that
	\[
	\Delta \La_{c,R}
	= \frac{m(m-2)}{4}\, c^2 \La_{c,R}^{\nu(m)}
	\]
	on $\BB_R$.  Now, if $R<1$, then
	\begin{equation} \label{f: La minus LaaR}
	\Delta (\La - \La_{a,R})
		= -\frac{m-2}{4(m-1)}\, \big(\mathrm{Scal}_g\, \La^{\nu(m)}+m(m-1)\, a^2\La_{a,R}^{\nu(m)} \big).
	\end{equation}
	Set $E:=\{(\rho,u)\in \BB_R\,:\,\La(\rho, u) > \La_{a,R}(\rho, u)\}$.

	On the one hand \eqref{f: La minus LaaR} and the assumption $\mathrm{Scal}_g\leq -m(m-1)\, a^2$ imply that $\Delta(\La - \La_{a,R})>0$ on $E$.

	On the other hand $\La - \La_{a,R}$ tends to $-\infty$ as $\rho$ tends to $R$ from the left, so that $E\subset  \subset \BB_R$.  By the strong maximum principle,
	we have thus that $E$ is empty and $\La \leq \La_{a,R}$ in $\BB_R$.  Taking the limit as $R$ tends to $1$, we obtain that
	\begin{equation}\label{f: upp est}
	\la \leq \frac{2}{a}\frac{1}{(1-\rho^2)}
	\end{equation}
	on $\BB_1$.

	Similarly, suppose that $R>1$.  Then
	\begin{equation} \label{f: eq I}
		\Delta \big(\La - \La_{b,R})
		= -\frac{(m-2)}{4(m-1)}\, \big(\mathrm{Scal}_g\La^{\nu(m)}+m(m-1)\,b^2\, \La_{b,R}^{\nu(m)} \big)
	\end{equation}
	on $\BB_{{1}}$.  Since $R>1$, the function $\La_{b,R}$ is bounded on $\overline {\BB}_1$.  Let $\{\rho_k\}$ be a sequence of numbers in $(0,1)$ such that
	$$
	\lim_{k\to\infty}\rho_k=1
	\quad\hbox{and}\quad
	\lim_{k\to\infty}\,  {j\big(\psi^{-1}(\rho_k)\big)}=+\infty.
	$$
Such a sequence clearly exists because of the integrability assumption of~$1/j$.   It is straightforward to check that
	$$
	\lim_{k\to \infty} \, \psi'(\psi^{-1}(\rho_k)) = 0
	\quad\hbox{and}\quad
	\lim_{k\to \infty} \,\La(\rho_k,u) = \infty,
	$$
	so that $\La(\rho_k,u) - \La_{b,R}(\rho_k,u) >0$ for $k$ large enough.
	Set
	\[
		F_k
		:= \{(\rho,u)\in (0,\rho_k) \times \BS^{m-1}:\,\La(\rho, u)<\La_{b,R}(\rho, u)\}\subset\subset \BB_{\rho_k}.
	\]
	From \eqref{f: eq I} and the assumption $\mathrm{Scal}_g\geq -m(m-1)\, b^2$, we deduce that $\Delta(\La - \La_{\cg{b},R})<0$ on $F_k$.
	By the strong maximum principle, $F_k$ is empty and $\La \geq \La_{b,R}$ in $\BB_{\rho_k}$.  Taking the limit as $k$ tends to $\infty$,  we have that
	$\La \geq \La_{b,R}$ in $\BB_{1}$.  Then taking the limit as $R$ tends to $1$, we see that
	\begin{equation}\label{f: low est}
	\la
		\geq
		\la_b
		:=\frac{2}{b}\frac{1}{(1-\rho^2)}
	\end{equation}
	on $\BB_1$.
	Note that $(\BB_1, \la_b^2 \,g_{Eucl})$ is isometric to $(M_b,g_b)$.  Hence $(M,g)$ is isometric to $(M_b,{(\la/\la_b)} ^2\, g_b)$ and, because of
	\eqref{f: upp est} and \eqref{f: low est}, the following estimate holds on $M_b$:
	\[
	1 \leq {\frac{ \la}{\la_b}} \leq \frac{b}{a}.
	\]
	The required conclusion now follows from Theorem \ref{t: conformal}.
\end{proof}

The next example shows that the assumptions of Theorem \ref{t: scalar} do not imply that the sectional curvature of the manifold at hand is negatively pinched.  

Assume that $m$ is at least $3$.  Recall that $\wrt t^2+ \sinh^2(t)\, g_{\BS^{m-1}}$ is the metric of the hyperbolic space of constant curvature $-1$ 
written in polar coordinates.  The example we want to exhibit is a model manifold with a perturbed hyperbolic metric of the form 
\begin{equation} \label{f: model mod hyp}
\wrt t^2+j_\tau(t)^2g_{\BS^{m-1}}, 
\quad\hbox{where}\quad
j_\tau(t) = \sinh(t) + \psi_\tau(t).
\end{equation}
Here $\tau$ is a large positive number and $\psi_\tau$ is a smooth real function on $[0, \infty)$ that meets the following requirements:
\begin{enumerate}
	\item[\itemno1]
		$\psi_\tau$ vanishes off the interval $[\tau-\nu,\tau+\nu]$;
	\item[\itemno2]
		$\psi_\tau$ satisfies the following estimates 
			\[
			\max \big\{\bignorm{\psi_\tau}{\ly{[0,\infty)}}, \bignorm{\psi_\tau'}{\ly{[0,\infty)}} \big\} 
			\le \de,\quad \psi_\tau''(t)
			\le \de,
			\quad \text{and}\quad \min \psi_\tau''=-T,
			\]
		where $\de$ and $\nu$ are constants in $(0,1)$ and in $\big(0,1/2\big)$, respectively, that will be specified later, 
		$\ds T:=\frac{c_m}{\e^{\nu}} \, \sinh(\tau+\nu)$ with $\ds c_m\in \Big(1,\frac{3m}{8} \Big)$.
\end{enumerate}
A convenient way to construct $\psi_\tau$ is as follows.  Start with a function $\phi_\tau: \BR\to \BR$ enjoying the following properties:	
	\begin{enumerate}
		\item[(a)]  $\phi_\tau(\tau+\cdot )$ is an even function in $C^{\infty}_c([-\nu,+\nu])$ and
		$$
			\int_{\tau-\nu}^{\tau+\nu} \phi_\tau(t)\wrt t
			= 0
			= \int_{\tau}^{\tau+\nu} \phi_\tau(t)\wrt t;
		$$
		\item[(b)] $\ds \max_{[\tau-\nu,\tau+\nu]}\,\phi_\tau(t) \le \de$ and $\ds \min_{[\tau-\nu,\tau+\nu]}\, \phi_\tau(t) = -T$;
		\item[(c)] $\ds\int_{\tau-\nu}^{\tau+\nu} |\phi_\tau(t) |\wrt t \le \de$.
	\end{enumerate}

\begin{figure}[h]
{	\centering
	\includegraphics[width=1\textwidth]{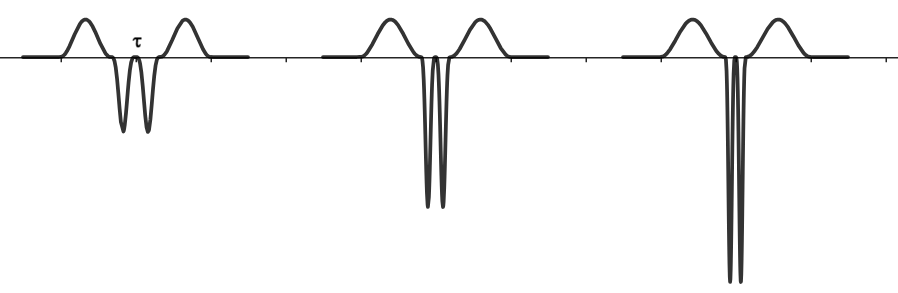}}
	\caption{Examples of functions $\phi_\tau$ for increasing values of $\tau$ and $T$ and constant value of $\delta$.}

\end{figure}

Then define
\begin{align*}
	\psi_\tau(t)
	= \int_0^t \wrt s \int_0^s \phi_\tau(q)\wrt q.
\end{align*}
We show that $\psi_\tau$ has the required properties.
Observe that	
	\[
		\int_0^\tau \phi_\tau(q) \wrt q
		= \int_{\tau-\nu}^\tau \phi_\tau(q) \wrt q=0,
	\]
	and that $\ds \int_0^s \phi_\tau(q)\wrt q=0$ if either $s>\tau+\nu$ or $s<\tau-\nu$.
	Moreover, if $\eta\in[0,\nu]$, then  
	\begin{align*}
	\int_0^{\tau-\eta}\!\! \phi_\tau(q)\wrt q
		= \int_{\tau-\nu}^{\tau-\eta}\!\! \phi_\tau(q)\wrt q= - \int_{\tau-\eta}^\tau \!\!\phi_\tau(q)\wrt q 
		= -\int_\tau^{\tau+\eta} \!\!\phi_\tau(q)\wrt q= -\int_0^{\tau+\eta} \!\!\phi_\tau(q)\wrt q
	\end{align*}
	i.e., the function $\ds \eta\to \int_0^{\tau+\eta} \phi_\tau(q)\wrt q$ is an odd function.  Furthermore
	\[ 
	\int_0^t\wrt s \int_0^s\phi_\tau(q)\wrt q
	= 0,
	\quant t\in [\tau-\nu,\tau+\nu]^c,
	\] 
	and 
	\[
		\Big|\int_0^t\wrt s \int_0^s\phi_\tau(q)\wrt q\Big|
		\le 2\, \de \nu
		< \de,
		\quant t\in [\tau-\nu,\tau+\nu].
		\]
	
	Now we come back to the model manifold with metric defined in \eqref{f: model mod hyp}	
	Note that this metric is hyperbolic when $t\not\in [\tau-\nu,\tau+\nu]$. Hence, in the following we will only consider $t\in [\tau-\nu,\tau+\nu]$.
	Recall that the sectional curvatures are $-j_\tau''(t)/j_\tau(t)$ for tangent planes containing the radial direction and 
	$(1-(j_\tau'(t))^2)/j_\tau(t)^2$ for planes orthogonal to the radial direction. Accordingly, the scalar curvature is
	\[
	\mathrm{Scal}(t)=(m-1)(m-2)\frac{1-(j_\tau'(t))^2}{j_\tau(t)^2}-2(m-1)\frac{j_\tau''(t)}{j_\tau(t)}.
		\]
We estimate
\begin{align*}
\frac{j_\tau''(t)}{j_\tau(t)}\ge \frac{\sinh''(t)-T}{\sinh(t)+\psi_\tau(t)} 
	=\frac{\sinh''(t)-c_m\,\e^{-\nu}\sinh(\tau+\nu)}{\sinh(t)+\psi_\tau(t)},
\end{align*}
where the equality is attained at some point $t_0$ in $(\tau,\tau+\nu)$.

Observe that
\[	
1 \le \frac{\sinh( \tau +\nu)}{e^\nu\sinh( \tau)} \le 1+\frac{1}{2e^{2\tau}}
\quant\,\nu>0,\,\tau>0.
\]
Next, fix $\vep>0$; by choosing $\nu$ and $\delta$ small enough and $\tau$ large enough (depending on~$\vep$), we see that 
	\begin{align*}
	\frac{j_\tau''(t)}{j_\tau(t)}
		 \ge \frac{\sinh''(t)- c_m\, \e^{-\nu} \, \sinh(\tau+\nu)}{\sinh(t)+\psi_\tau(t)}
		 =   \frac{1- c_m\,  \ds\frac{\sinh(\tau+\nu)}{\e^{\nu} \,\sinh(t)}}{1+\ds\frac{\psi_\tau(t)}{\sinh(t)}} 
		 \ge \frac{1- c_m\,  \ds\frac{\sinh(\tau+\nu)}{\e^{\nu} \,\sinh(\tau)}}{1+\ds\frac{\delta}{\sinh(\tau-\nu)}},
	\end{align*}
	which is clearly $\ge 1-c_m-\vep$.
In the same vein
\begin{align*}
	\frac{j_\tau''(t)}{j_\tau(t)}\le \frac{\sinh''(t)+\delta}{\sinh(t)-\delta}\le 1+\vep.\end{align*}
Similarly,
\begin{align*}
	\frac{1-(j_\tau'(t))^2}{j_\tau(t)^2}&\le \frac{1-(\sinh'(t))^2-2\sinh'(t)\psi'_\tau(t)}{\sinh^2(t)+2\sinh(t)\psi_\tau(t)}\\
		&\le  \frac{1-(\sinh'(t))^2}{\sinh^2(t)+2\delta \sinh(t)}+\frac{2\delta\sinh'(t)}{\sinh^2(t)+2\delta \sinh(t)} \\
		& \le -1 + \vep,
\end{align*}
and 
\begin{align*}
	\frac{1-(j_\tau'(t))^2}{j_\tau(t)^2}&\ge \frac{1-(\sinh'(t))^2-2\sinh'(t)\delta - \delta^2}{\sinh^2(t)+2\delta\sinh(t) + \delta^2}\ge -1 - \vep ,
\end{align*}
so that
\[
-m(1+\vep)\le \frac{\mathrm{Scal}(t)}{m-1}\le -\left[(m-2)(1-\vep) - 2 (c_m-1+\vep)\right]
,
\]
i.e., that the scalar curvature is pinched between two negative constants whose ratio is smaller than $4$. However,
\[
	\frac{j_\tau''(t_0)}{j_\tau(t_0)}
	=    \frac{1-c_m\, \e^{-\nu}\,s(\tau+\nu)/s(t)}{1+\psi_\tau(t_0)/s(t_0)}
	\le  \frac{1-c_m\, \e^{-\nu}}{1-\delta/s(\tau)}
	< 0
	\]
for $\nu,\delta$ small enough, so that the sectional curvatures of the manifold attain positive values.
pinched.

\section{Strict rough isometries} \label{s: Strict}

In this section $(X,d,\mu)$ will denote a measured metric space, and $\cB$ the family of all balls on $X$.  We assume that $0< \mu(B) <\infty$
for every $B$ in~$\cB$.  For each $B$ in $\cB$ we denote by $c_B$ and $r_B$ the centre and the radius of $B$, respectively.
Furthermore, we denote by $k B$ the ball with centre $c_B$ and radius $k r_B$.  For each $s$ in $\BR^+$,
we denote by $\cB_s$ the family of all balls~$B$ in~$\cB$ such that $r_B \leq s$, and by $\cB^s$ the collection of all balls in $X$ with radius $s$. In the main results of this section, we will require the space $X$ to be geodesic, i.e., for every $x,y\in X$ there exists (at least) one geodesic segment whose length realises $d(x,y)$.

\medskip
We say that $X$ 
is \textit{locally doubling} (LD) if for every positive number $s$ there exists a constant $D_s$ such that
\begin{equation}  \label{f: LDC}
\mu \bigl(2 B\bigr)
\leq D_s \, \mu  \bigl(B\bigr)
\quant B \in \cB_s.
\end{equation}

Given a positive number $\eta$, a set $\fX$ of points in $X$ is a \emph{$\eta$-discretisation} of $X$ if it is maximal with respect to the following property:
$$
\inf\{d(z,w): z,w \in \fX, z \neq w \} >\eta\quad
\hbox{and}\quad d(\fX, x) \leq \eta
\quant x \in X.
$$

It is straightforward to show that $\eta$-discretisations exist for every $\eta$.   By definition the family of balls $\{B_{2\eta}(z)\,:\, z\in \fX\} $ is a covering of $X$. For each subset~$E$ of $X$, we set
$$
\fX_E
:=   \big\{z \in \fX: B_{2\eta}(z) \cap E \neq \emptyset \big\},
$$
and denote by $\sharp \fX_E$ its cardinality.  If $x$ is a point in $X$, we write $\fX_x$ instead of $\fX_{\{x\}}$, for simplicity.
Note that $\sharp\fX_x$ is the number of balls of the covering $\set{B_{2\eta}(z): z \in \fX }$ containing $x$.

We say that the covering $\set{B_{2\eta}(z): z \in \fX }$ has the \textit{finite overlapping property} if
$$
\om
:= \sup_{x \in X} \,\sharp\fX_x
< \infty.
$$

The next lemma contains some known properties of LD spaces.

\begin{lemma}\label{l: balls}
Suppose that $X$ is a LD space.  Assume that $c$ is a positive number and that $\fX$ is a $c/2$-discretisation.  The following hold:
	\begin{enumerate}
		\item[\itemno1]
			the family $\set{B_c(z): z\in \fX}$ is a covering of~$X$ with the finite overlapping property;
		\item[\itemno2]
			for every $R > c$
			$$
			\Upsilon_{c/2,R}^{\fX}
			:= \sup_{B\in \cB^R} \, \sharp \fX_B
			$$
			is finite;
		\item[\itemno3]
			if $X$ is a  geodesic space, then for each positive number~$\de$ there exists a constant~$\ga_\de$ such that
			$$
			\mu\big(B^*) \leq \ga_\de\, \mu(B)
			$$
			for every ball $B$ in $X$ with radius greater than $1$; here $B^*$ denotes the ball with the same centre as $B$ and radius $r_B+\de$.
	\end{enumerate}
\end{lemma}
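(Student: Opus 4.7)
For part (i), the covering property is immediate from the maximality built into the definition of a discretisation: $d(\fX,x)\leq c/2<c$ for every $x\in X$, so $x\in B_c(z)$ for some $z\in\fX$. For the finite overlap property, I would argue as follows. Suppose $x\in\bigcap_{i=1}^N B_c(z_i)$ for distinct $z_1,\dots,z_N\in\fX$. Then each $z_i\in B_c(x)$, so the balls $B_{c/4}(z_i)$ (pairwise disjoint by the $c/2$-separation of $\fX$) are all contained in $B_{5c/4}(x)\subseteq B_{9c/4}(z_i)$. A bounded number of applications of \eqref{f: LDC} produces a constant $\tilde D_c$ with $\mu(B_{9c/4}(z_i))\leq \tilde D_c\,\mu(B_{c/4}(z_i))$, and summing the disjoint pieces gives $N\leq \tilde D_c$.

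Part (ii) follows by essentially the same argument. For $B\in\cB^R$ and $z\in\fX_B$, the definition gives $d(c_B,z)<R+c$. The disjoint balls $B_{c/4}(z)$ for $z\in\fX_B$ all lie in $B_{R+5c/4}(c_B)\subseteq B_{2R+9c/4}(z)$, and iterating \eqref{f: LDC} a number of times depending on $R$ and $c$ produces a constant $C_{R,c}$ with $\mu(B_{R+5c/4}(c_B))\leq C_{R,c}\,\mu(B_{c/4}(z))$, whence $\sharp\fX_B\leq C_{R,c}$.

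Part (iii) is the most delicate and uses the geodesic hypothesis in an essential way. Fix $\de>0$; by iterating the desired estimate (each iteration applies to a ball of radius $>1$) we may assume $\de<1$. Pick a $\de/4$-discretisation $\fY$. By part (i), the balls $\{B_{\de/2}(z):z\in\fY\}$ cover $X$, so for $B=B_r(x)$ with $r>1$ and $B^*=B_{r+\de}(x)$ we have $B^*\subseteq\bigcup_{z\in\fY_{B^*}} B_{\de/2}(z)$, and hence
$$
\mu(B^*)\leq \sum_{z\in\fY_{B^*}}\mu(B_{\de/2}(z)).
$$
For each such $z$ the geodesic structure provides a point $z^\flat$ on a minimising geodesic from $x$ to $z$ with $d(x,z^\flat)=\min(d(x,z),r-\de/2)$; since $B_{\de/2}(z)\cap B^*\neq\emptyset$ implies $d(x,z)<r+3\de/2$, one checks $d(z,z^\flat)\leq 2\de$. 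Therefore $B_{\de/2}(z)\subset B_{5\de/2}(z^\flat)$ and $B_{\de/2}(z^\flat)\subset B$, and by iterated local doubling $\mu(B_{5\de/2}(z^\flat))\leq C_\de\,\mu(B_{\de/2}(z^\flat))$.

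It remains to control the sum $\sum_{z\in\fY_{B^*}}\mu(B_{\de/2}(z^\flat))$. Interchanging sum and integral,
$$
\sum_{z\in\fY_{B^*}}\mu(B_{\de/2}(z^\flat))=\int_X \sharp\{z\in\fY_{B^*}:d(y,z^\flat)<\de/2\}\,\wrt\mu(y).
$$
For a fixed $y$, any $z$ contributing to the count satisfies $d(y,z)\leq d(y,z^\flat)+d(z^\flat,z)<5\de/2$, so the number of such $z$ is at most $\sharp\{z\in\fY:z\in B_{5\de/2}(y)\}\leq \Upsilon^{\fY}_{\de/4,5\de/2}$ by part (ii), a constant depending only on $\de$. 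Since every $B_{\de/2}(z^\flat)$ lies in $B$, combining the two estimates yields
$$
\mu(B^*)\leq C_\de\,\Upsilon^{\fY}_{\de/4,5\de/2}\,\mu(B)=:\ga_\de\,\mu(B).
$$
The main obstacle throughout is the bookkeeping for part (iii): the ``projection'' $z\mapsto z^\flat$ is not injective and may collapse many boundary discretisation points onto nearby projections, but this multiplicity is exactly what part (ii) controls, so the apparent difficulty is resolved by the earlier parts of the lemma.
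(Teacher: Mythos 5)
Your proof is correct. For (i) and (ii) you give the standard disjoint-small-balls argument; the paper simply cites \cite[Lemma~1]{MVo} for these parts, and your argument is the one intended there. For (iii), both you and the paper exploit the geodesic hypothesis through an inward projection along geodesics, but the bookkeeping differs. The paper works with the given $c/2$-discretisation $\fX$, first reduces to $r_B>10c$, shrinks $B$ to $B_*$ of radius $r_B-2c$, projects a generic point $x\in B^*$ along the geodesic to $c_B$ to a point $\xi\in\overline{B_*}$, finds a nearby $z\in\fX_{B_*}$, and then concludes via the pairwise disjointness of $\{B_{c/4}(z):z\in\fX_{B_*}\}\subset B$, which gives $\sum\mu(B_{c/4}(z))\leq\mu(B)$. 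You instead introduce a fresh $\delta/4$-discretisation $\fY$, project the discretisation centres $z$ themselves to points $z^\flat\in B$, and close the argument with a Fubini/overlap count controlled by part (ii) rather than by disjointness. The two routes are morally equivalent and of comparable length; a small advantage of yours is that the scale $\delta/4$ is decoupled from $c$, so the preliminary reduction to $r_B>10c$ is not needed (only $r_B>1>\delta$).
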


\begin{proof}
	For the proof of \rmi\ and \rmii\ see \cite[Lemma~1]{MVo}.

We prove \rmiii.  Since $X$ is LD, for every $s$ greater than $1$, the required property holds for all balls with radius at most $s$.
Indeed, there exists $k\in \mathbb{Z}$ such that $(k-1)r_B<\delta\leq kr_B$, and, since $r_B\geq  1$, $k<\log_2 \delta+1$. It follows that
$r_B+\delta\leq 2^{k+1}r_B\leq 2(2^ks)$ and therefore
\[
\mu(B^*)\leq \mu(2^{k+1}B)\leq D_{2^ks}^k\mu(B).
\]
Thus, we can assume that $r_B$ is greater than $10 c$ in the argument below.

	Denote by $B_*$ the ball with the same centre as $B$ and radius $r_B-2c$.  Since  $\fX_{B_*}=\{z\in \fX: d(z,c_B)<r_B-c\}$ and
	the balls $\big\{ B_{c/{4}}(z): z \in \fX_{B_*}\big\}$ are mutually disjoint,
	\begin{equation} \label{f: expansion}	\bigcup_{z\in \fX_{B_*}} \, B_{c}(z)\subset B
		\qquad\hbox{and}\quad
	\sum_{z\in \fX_{B_*}}\, \mu\big(B_{c/{ 4}}(z)\big)
	\leq \mu(B).
	\end{equation}
	Set $\Om := \ds\bigcup_{z\in \fX_{B_*}} \, B_{\de+3c}(z)$.  We \textit{claim} that $B^* \subset \Om$.
 
 Recalling  that $\{ B_c(z): z \in \fX\}$ is a covering of $X$ and that $\fX_{B_*}=\{z\in \fX: B_c(z)\cap B_*\ne \emptyset\}$, we obtain that
\[
B_*\subset \bigcup_{z\in \fX_{B_*}} B_c(z)\subset \Om.
\]

Thus it suffices to show that every point $x$ in $B^*\setminus {B_*}$ is in $\Om$.
	Consider a minimizing geodesic~$\ga$ joining $x$ and~$c_B$.  Denote by $\xi$ the point on $\ga$ at distance
	$r_B-2c$ from $c_B$.  Since $\{B_c(z): z \in \fX\}$ is a covering of $X$ and $\xi$ belongs to $B_*$, there exists a point $z$ in $\fX$ with $d(z,c_B) < r_B-c$
	such that~$\xi$ belongs to $B_c(z)$.  The triangle inequality now implies that
	$$
	d(z,x)
	\leq d(z,\xi) + d(\xi,x)
	< c + r_B +\de - (r_B-2c)
	< 3c+\de.
	$$
	Hence
	$$
	\mu\big(B^*)
	\leq \sum_{z\in \fX_{B_*}}\, \mu\big(B_{\de+3c}(z)\big)
	\leq C\, \sum_{z\in \fX_{B_*}}\, \mu\big(B_{c/{4}}(z)\big)
	\leq C\, \mu (B);
	$$
	the second inequality above follows from the  {LD property} and the third from \eqref{f: expansion}.
\end{proof}

For any positive number $R$, we set
\begin{equation} \label{f: v and V}
	v_X(R)
	:= \inf_{x\in X} \, \mu\big(B_{ R}(x)\big)
	\quad\hbox{and}\quad
	V_X(R)
	:= \sup_{x\in X} \, \mu\big(B_{ R}(x)\big).
\end{equation}
We say that $X$ satisfies the \emph{uniform ball size condition} (UBSC) if
\begin{equation} \label{f: def UBSC}
	0 < v_X(1)
\quad\hbox{and}\quad
	V_X(1) < \infty.
\end{equation}

\begin{lemma}\label{l: UBSC}
	Suppose that $(X,d,\mu)$ is a LD metric measure space satisfying the UBSC \eqref{f: def UBSC}, and denote by $\fX$ a $1$-discretisation of $X$.
	Then for every $R>1$ the following holds
	\begin{equation} \label{f: UBSC}
		V_X(R) \leq D_1\, V_M(1) \, \Upsilon_{1,R}^\fX.
	\end{equation}
\end{lemma}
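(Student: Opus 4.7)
The plan is to dominate $\mu(B_R(x))$ for an arbitrary $x \in X$ by the right-hand side of \eqref{f: UBSC}, and then take the supremum over $x$. All the work is already packaged in Lemma~\ref{l: balls} and in the definitions of $V_X(1)$ and $\Upsilon_{1,R}^\fX$; the argument is essentially an assembly.

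First, applying Lemma~\ref{l: balls}\rmi\ to the $1$-discretisation $\fX$ (that is, taking $c/2 = 1$, so $c = 2$), the family $\{B_2(z) : z \in \fX\}$ is a covering of $X$. By the very definition of $\fX_{B_R(x)} = \{z \in \fX : B_2(z) \cap B_R(x) \neq \emptyset\}$, the sub-family $\{B_2(z) : z \in \fX_{B_R(x)}\}$ still covers $B_R(x)$. Hence
$$
\mu\bigl(B_R(x)\bigr) \le \sum_{z \in \fX_{B_R(x)}} \mu\bigl(B_2(z)\bigr).
$$

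Second, for each such $z$, we write $B_2(z) = 2 B_1(z)$ and apply the LD property \eqref{f: LDC} at scale $s = 1$ to get $\mu(B_2(z)) \le D_1\, \mu(B_1(z))$. The definition \eqref{f: v and V} of $V_X(1)$ then gives $\mu(B_1(z)) \le V_X(1)$, so each summand is at most $D_1\, V_X(1)$.

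Finally, since $B_R(x) \in \cB^R$, the definition of $\Upsilon_{1,R}^\fX$ in Lemma~\ref{l: balls}\rmii\ provides $\sharp \fX_{B_R(x)} \le \Upsilon_{1,R}^\fX$. Combining the two displays yields $\mu(B_R(x)) \le D_1\, V_X(1)\, \Upsilon_{1,R}^\fX$, and taking the supremum over $x \in X$ delivers \eqref{f: UBSC}. I do not expect any genuine obstacle: the only point to double-check is the numerology relating the discretisation parameter to the covering radius (i.e.\ that $\eta = 1$ corresponds to $c = 2$ in Lemma~\ref{l: balls}\rmi), which is consistent with the definition of $\fX_E$ given at the start of Section~\ref{s: Strict}.
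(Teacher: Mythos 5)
Your proof is correct and follows essentially the same route as the paper's: cover $B_R(x)$ by the balls $\{B_2(z): z\in\fX_{B_R(x)}\}$ coming from Lemma~\ref{l: balls}~\rmi, bound each $\mu(B_2(z))$ by $D_1 V_X(1)$ via the LD property and the UBSC, and control $\sharp\fX_{B_R(x)}$ by $\Upsilon_{1,R}^\fX$ via Lemma~\ref{l: balls}~\rmii. (Incidentally, $V_M(1)$ in the lemma's statement is a typo for $V_X(1)$, which you correctly use.)
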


\begin{proof}
	Suppose that $R$ is greater than $1$, and consider a ball $B$ in~$\cB^R$.
	By Lemma~\ref{l: balls}~\rmi, the family $\{B_2(z): z \in \fX\}$ is a covering of $X$.  Therefore
	$$
	\mu(B)
	\leq \sum_{z\in \fX_B} \, \mu\big(B_{2}(z)\big)
	\leq \sharp\fX_B  \cdot  \sup_{z\in \fX} \, \mu\big(B_{2}(z)\big)
	\leq \sharp\fX_B  \cdot   D_1 \,V_M(1);
	$$
	the last inequality follows from the LD property.  The required conclusion now follows from Lemma~\ref{l: balls}~\rmii.
\end{proof}

\begin{definition} \label{def: RI}
Suppose that $X$ and $X'$ are two metric spaces with distances $d$ and $d'$, respectively.
	A map $\vp : X \to X'$ is a \textit{strict rough isometry} (also referred to as $1$-\textit{quasi-isometries} in the literature) if
\begin{equation} \label{f: RI uno}
	K:= \sup_{x' \in X'}\, d'\big(\vp(X),x'\big)
	<\infty,
\end{equation}
and there exists a nonnegative number $\be$ such that
\begin{equation} \label{f: RI due}
	d(x,y)-\be \le d'\big(\vp(x),\vp(y)\big) \le d(x,y)+\be
	\quant x,y \in X.
\end{equation}
\end{definition}

Points in $X$ and in $X'$ will be customarily denoted by \textit{nonprimed} and \textit{primed} lower case latin letters, respectively.
If $x'$ is a point in $X'$, we denote by $B_r(x')$ and $S_r(x')$ the open ball and the sphere with centre~$x'$ and radius $r$.

\begin{lemma}\label{l: growth}
	Assume that $(X,d,\mu)$ and $(X',d',\mu')$ are LD geodesic spaces enjoying the UBSC \eqref{f: def UBSC}.   Suppose that $\vp$ is a
	strict quasi-isometry between $X$ and $X'$.
	Then there exist positive constants $\Ga_0$, $\Ga_1$ and $R_0$ such that
	\begin{equation}\label{f:l:growth}
			\Ga_0\, \mu\big(B_R(x)\big) \leq \mu'\big(B_R(x')\big) \leq \Ga_1\, \mu\big(B_R(x)\big)
\end{equation}
	for every $x'$ in $\vp(X)$, for every $x$ in $\vp^{-1}\big(\{x'\}\big)$, and for every $R\geq R_0$.
\end{lemma}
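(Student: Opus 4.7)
The plan is to transfer volume estimates between $X$ and $X'$ via the quasi-isometry $\vp$ by using two different discretisations of $X$, tailored to the two inequalities.  The expansion estimate of Lemma~\ref{l: balls}~\rmiii\ will be invoked repeatedly to absorb all additive distortions (the constants $K$ and $\be$) into multiplicative ones; this forces the conclusion to hold only for $R$ above some threshold $R_0$.

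\textbf{Upper bound.}  Fix a $1$-discretisation $\fX$ of $X$.  Given $y'\in B_R(x')$, property \eqref{f: RI uno} yields $y\in X$ with $d'(\vp(y),y')\leq K$, and Lemma~\ref{l: balls}~\rmi\ gives $w\in\fX$ with $d(w,y)\leq 1$.  A double application of \eqref{f: RI due} then shows that $d'(\vp(w),y')\leq K+\be+1$ and $d(w,x)\leq R+K+\be+1$, so that
\[
B_R(x')\ \subseteq\ \bigcup_{w\in\fX\cap B_{R+K+\be+1}(x)} B_{K+\be+1}\big(\vp(w)\big).
\]
Hence $\mu'(B_R(x'))\leq N\cdot V_{X'}(K+\be+1)$ with $N$ the cardinality of the index set.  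Since the balls $B_{1/2}(w)$ ($w\in\fX$) are pairwise disjoint and each has measure at least $v_X(1)/D_{1/2}$ by local doubling, $N$ is controlled by $\mu(B_{R+K+\be+3/2}(x))$, which Lemma~\ref{l: balls}~\rmiii\ bounds by a constant multiple of $\mu(B_R(x))$.  The finiteness of $V_{X'}(K+\be+1)$ comes from Lemma~\ref{l: UBSC}.

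\textbf{Lower bound.}  The obstacle here is that $\vp$ need not be injective, so the naive pushforward of a discretisation may collapse.  The cure is to fix an $\eta$-discretisation $\fY$ of $X$ with $\eta>\be$; the concrete choice $\eta=\be+2$ ensures, via \eqref{f: RI due}, that $d'(\vp(w),\vp(w'))>2$ for distinct $w,w'\in\fY$, so the balls $B_1(\vp(w))$ are mutually disjoint.  For $w\in\fY\cap B_{R-\be-1}(x)$ these balls are moreover contained in $B_R(x')$, whence
\[
\mu'\big(B_R(x')\big)\ \geq\ M\cdot v_{X'}(1),\qquad M:=\bigmod{\fY\cap B_{R-\be-1}(x)}.
\]
The $\eta$-covering property of $\fY$ gives $B_{R-2\be-3}(x)\subseteq\bigcup_{w\in\fY\cap B_{R-\be-1}(x)}B_{\be+2}(w)$, so $M\geq \mu(B_{R-2\be-3}(x))/V_X(\be+2)$.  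Lemma~\ref{l: balls}~\rmiii, applied to the smaller ball $B_{R-2\be-3}(x)$ (whose radius must exceed $1$), then gives $\mu(B_{R-2\be-3}(x))\geq \mu(B_R(x))/\ga_{2\be+3}$, valid once $R>2\be+4$.

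\textbf{Main obstacle.}  The delicate point is balancing the two complementary uses of discretisation: dense enough that its $\vp$-image covers $B_R(x')$ efficiently (upper bound), but sparse enough that its $\vp$-image stays separated (lower bound).  Once the two scales are chosen appropriately, the additive distortions introduced by $K$ and $\be$ are systematically absorbed by Lemma~\ref{l: balls}~\rmiii, at the cost of the threshold $R_0:=2\be+4$ below which the lower-bound argument breaks down.
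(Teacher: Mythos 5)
Your proof is correct and rests on the same basic toolkit as the paper's (discretisations, local doubling, the UBSC, and the expansion estimate of Lemma~\ref{l: balls}~\rmiii), but the bookkeeping is organised differently. The paper constructs a single $\kappa$-separated net $\fX'$ \emph{inside $\vp(X)\subseteq X'$} with $\kappa=\max(K,\be+2)$, then lifts it to a net $\fX$ in $X$ by choosing one preimage per point; it verifies that $\fX$ is $(\kappa-\be)$-separated and that $\{B_{2\kappa+\be}(z):z\in\fX\}$ covers $X$, and uses this single pair $(\fX,\fX')$ for both inequalities (spelling out the lower bound and omitting the symmetric upper one). You instead pick two independently chosen discretisations of the \emph{source} $X$, at mesh $1$ for the upper bound and mesh $\be+2$ for the lower bound, and push them forward by $\vp$; this trades the paper's economy (one net, checked once) for a cleaner separation of the two inequalities and avoids having to verify that the preimage set is itself a discretisation. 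Either way the additive distortions $K,\be$ are absorbed multiplicatively by Lemma~\ref{l: balls}~\rmiii\ once $R$ is large, which is where $R_0$ comes from. Two small slips worth fixing: in the upper bound the second application of \eqref{f: RI due} contributes another $\be$, so the correct bound is $d(w,x)\leq R+K+2\be+1$ (hence index over $\fX\cap B_{R+K+2\be+1}(x)$); and the $\eta$-density condition $d(\fY,x)\leq\eta$ only guarantees covering by \emph{closed} balls of radius $\eta$, so one should cover by $B_{\eta+1}(w)$ (or $B_{2\eta}(w)$, as the paper does) rather than $B_\eta(w)$, with the corresponding harmless change of constants. Neither affects the validity of the argument.
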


\begin{proof}
	Denote by $K$ and $\be$ the parameters of $\vp$ (see Definition~\ref{def: RI}).  Choose
	$$
	\kappa = \max \big(K,\be+2)
	\quad\hbox{and}\quad
	R_0
	:= 6\,(2\kappa+\be)+1.
	$$
	Denote by~$\fX'$ a maximal (with respect to inclusion) set of points in $\vp(X)$ such that
	\begin{equation} \label{f: separation fMprimed}
	d'(x',y') \geq \kappa
	\quant x',y'\in\fX'.
	\end{equation}
	Then the family of balls $\big\{B_{2\kappa}(z'): z' \in \fX' \big\}$ is a covering of $X'$.  Indeed, if $v'$ is a point in $X'$, then there exist
	$x'$ in $\vp(X)$ such that $d'(v',x') \leq K$, and $z'$ in $\fX'$ at distance at most $\kappa$ from $x'$.  The triangle inequality now implies that
	$$
	d'(v',z')
	\leq K+\kappa
	< 2\kappa.
	$$
	Furthermore, the balls $\big\{B_{\kappa/2}(z'): z' \in \fX' \big\}$ are mutually disjoint.

	For each $z'$ in $\fX'$, choose (arbitrarily) a point $z$ in $\vp^{-1}\big(\{z'\}\big)$, and denote by~$\fX$ the collection of
	such points.  By construction, the restriction of~$\vp$ to~$\fX$ is a bijective map between $\fX$ and $\fX'$.

	Observe that the points in $\fX$ are $(\kappa -\be)$-separated.  Indeed, suppose that~$z$ and~$z_1$ are in $\fX$.  Then
	$$
	d(z,z_1)
	\geq d\big(\vp(z),\vp(z_1)\big) - \be
	\geq \kappa-\be;
	$$
	the last inequality follows from \eqref{f: separation fMprimed}.  Therefore the balls in the family
	$$
	\big\{B_{(\kappa-\be)/2}({z }): z \in \fX \big\}
	$$
	are mutually disjoint.  Furthermore the collection of balls
	$$
	\big\{B_{2\kappa+\be}(z): z \in \fX \big\}
	$$
	is a covering of~$X$.  Indeed, suppose that $x$ is in $X$.  Since $\big\{B_{2\kappa}(z'): z' \in \fX' \big\}$ is a covering of~$X'$, there exists a point
	$z'$ in $\fX'$ such that $d\big(\vp(x),z'\big) < 2\kappa$.  Denote by $z$ the point in $\fX$ such that $\vp(z) = z'$.  Then
	$$
	d(x,z)
	\leq d\big(\vp(x), \vp(z)\big) + \be
	\leq 2\kappa + \be,
	$$
	as required.

	Now, suppose that $R\geq R_0$, and consider the ball $B_R(x')$, where $x'$ is any point in {{$\vp(X)$}}.
	Since $\big\{B_{2\kappa}(z'): z' \in \fX' \big\}$ is a covering of $X'$, there exists a point~$z'$ in $\fX'$ such that $d'(x',z') < 2\kappa$.  Therefore
	$B_R(x') \supseteq B_{R-2\kappa}(z')$, so that
	\begin{equation} \label{f: II}
	\begin{aligned}
	\mu'\big(B_R(x')\big) 	
	& \geq \mu'\big(B_{R-2\kappa}(z')\big) \\	
	& \geq \sum_{z'_1\in (\fX' \cap B_{R-{ 5\kappa/2}}(z'))} \, \mu'\big(B_{\kappa/2} (z'_1)\big) \\
	& \geq v_{X'}(\kappa/2)  \cdot \sharp \big(\fX' \cap B_{R-{ 5\kappa/2}}(z')\big);
	\end{aligned}
	\end{equation}
	the last inequality follows from the disjointness of the balls in $\{B_{\kappa/2}(z'_1): z'_1 \in \fX' \big\}$.

	Denote by $z$ the unique point in $\fX$ such that $\vp(z) = z'$.  Observe that if $z_1$ is a point in $\fX$ such that $d(z_1,z)<R-{ 5\kappa/2}-\be$, then
	$$
	d'\big(\vp(z_1),z'\big)
	\leq d\big(z_1,z\big) + \be
	<R-{ 5\kappa/2},
	$$
	so that
	\begin{equation} \label{f: III}
	\sharp \big(\fX' \cap B_{R-{ 5\kappa/2}}(z')\big)
	\geq \sharp \big(\fX \cap B_{R-{ 5\kappa/2}-\beta}(z)\big).
	\end{equation}
	The triangle inequality implies that if $z_1$ is in $\fX$ and $B_{2\kappa+ \be} (z_1)$ has nonempty intersection with $B_{R-{9\kappa/2}-2\beta}(z)$,
	then $d(z,z_1) < R-{5\kappa/2}-\be$.  Since $\big\{B_{2\kappa+\be}(z_1): z_1 \in \fX \big\}$ is a covering of $X$, we conclude that
	$$
	\begin{aligned}
	\mu\big(B_{R-4\kappa-2\be}(z)\big)
		& \leq \sum_{z_1\in (\fX \cap B_{R-{5\kappa/2}-\beta}(z))}\, \mu\big(B_{2\kappa+\be}(z_1)\big) \\
		& \leq V_X(2\kappa+\be) \cdot \sharp\big(\fX \cap B_{R-{5\kappa/2}-\beta}(z)\big).
	\end{aligned}
	$$
	This inequality, \eqref{f: II} and \eqref{f: III} imply that
	$$
	\mu\big(B_{R-{9\kappa/2}-2\be}(z)\big)
	\leq \frac{V_X(2\kappa+\be)}{v_{X'}(\kappa/2)} \, \mu'\big(B_R(x')\big).
	$$
	Now,
    choose any point $x$ in $\vp^{-1}\big(\{x'\}\big)$.  Observe that
	$$
	d(x,z)
	\leq d\big(\vp(x),\vp(z)\big) + \be
	\leq 2\kappa + \be,
	$$
	so that the ball $B_{R- 13\kappa/2-3\be}(x)$ is contained in $B_{R-{9\kappa/2}-2\be}(z)$.  Consequently
	$$
	\mu\big(R- {13\kappa/2}-3\be(x)\big)
	\leq \frac{V_X(2\kappa+\be)}{v_{X'}(\kappa/2)} \,  \mu'\big(B_R(x')\big).   	
	$$
	By Lemma~\ref{l: balls}~\rmiii,
	$$
	\mu\big(B_{R}(x)\big)
	\leq \ga_{{13\kappa/2}+3\be}\, \mu\big(B_{R-{13\kappa/2}-3\be}(x)\big).
	$$
	Combining this with the penultimate inequality proves the left inequality in \eqref{f:l:growth}, with
	$$
	\Ga_0^{-1} = \ga_{{13\kappa/2}+3\be}\, \frac{V_X(2\kappa+\be)}{v_{X'}(\kappa/2)}.
	$$

	The proof of the right inequality in \eqref{f:l:growth} is similar, and it is omitted.
\end{proof}

The next theorem is the main result of this section.  Preliminarily, we need more notation.  Suppose that $(X,d,\mu)$ and $(X',d',\mu')$ and $\vp$ are as in
the statement of the lemma above. 
For any positive number $\si$ and any function $f$ on $X'$ denote by~$\pi_\si f$ the function on $X'$, defined by
\begin{equation} \label{f: pi f}
	(\pi_\si f) (x')
	:= \int_{B_\si(x')} \mod{f} \wrt \mu'
	\quant x'\in X'.
\end{equation}
Similarly, if $G$ is a function on $X$, define $\pi_\si G$ by
\begin{equation} \label{f: pi f}
	(\pi_\si G) (x)
	:= \int_{B_\si(x)} \mod{G} \wrt \mu
	\quant x\in X.
\end{equation}
Given a function $f$ on $X'$, and a positive number $R$, consider the averages 
$$
	A_R f(z')
	:= \frac{1}{\mu'\big(B_R(z')\big)} \, \int_{B_R(z')} \mod{f} \wrt \mu'
	\quant z' \in  X',
$$
and
$$
	\cA_R f(z')
	:= \sup_{B\in \cB^R: B \ni z'} \, \frac{1}{\bigmod{B}} \, \int_{B} \mod{f} \wrt \mu'
	\quant z' \in  X'.
$$
For any number $\si$ greater than $1$, define the maximal operators
\begin{equation} \label{f: MK I}
	\cM^\si\! f (z')
	:= \sup_{1< R\leq \si} \, A_R \mod{f}(z')
	\quant z' \in X'
\end{equation}
and
\begin{equation} \label{f: MK II}
	\cM_\si f(z')
	:= \sup_{R>\si} \, A_R \mod{f}(z')
	\quant z' \in X'.
\end{equation}

\smallskip
\begin{theorem} \label{t: quasi iso}
	Assume that $(X,d,\mu)$ and $(X',d',\mu')$ are LD  {geodesic} spaces enjoying the UBSC \eqref{f: def UBSC}.   Suppose that $\vp$ is a
	strict quasi-isometry between $X$ and $X'$, that $1\leq p<\infty$, and that $1\leq r\leq s\leq \infty$.  The following hold:
\begin{enumerate}
	\item[\itemno1]
		if $\cM_\infty$ is bounded from $\lorentz{p}{r}{X}$ to $\lorentz{p}{s}{X}$, then $\cM_\infty$ is bounded from $\lorentz{p}{r}{X'}$
		to $\lorentz{p}{s}{X'}$;
	\item[\itemno2]
		if $\cN_\infty$ is bounded from $\lorentz{p}{r}{X}$ to $\lorentz{p}{s}{X}$, then $\cN_\infty$ is bounded from $\lorentz{p}{r}{X'}$
		to $\lorentz{p}{s}{X'}$.
\end{enumerate}
\end{theorem}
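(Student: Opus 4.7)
The plan is to transfer the boundedness from $X$ to $X'$ by coupling the two spaces via a compatible pair of $1$-discretisations. Fix a $1$-discretisation $\fX'=\{z'_k\}$ of $X'$ and, arguing as in the proof of Lemma~\ref{l: growth}, choose $\fX=\{z_k\}\subset X$ with $\vp(z_k)=z'_k$ so that, by Lemma~\ref{l: balls}~\rmi, both $\{B_2(z_k)\}$ and $\{B_2(z'_k)\}$ are coverings with finite overlap. Given $f\in\lorentz{p}{r}{X'}$, I would transplant it to the function on $X$ given by
$$
F := \sum_k c_k\,\chi_{B_2(z_k)}, \qquad c_k := \frac{1}{\mu(B_2(z_k))}\int_{B_2(z'_k)}\mod{f}\wrt\mu'.
$$
Using the UBSC, the finite overlap, and the standard equivalence between a Lorentz quasi-norm and the discrete norm of the sequence of cell averages (relative to the Voronoi cells of $\fX'$), one verifies $\bignorm{F}{\lorentz{p}{r}{X}}\lesssim\bignorm{f}{\lorentz{p}{r}{X'}}$.

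Next I would establish the pointwise domination $\cM_\infty f(x')\leq C\,\cM_\infty F(z_k)$, where $z_k\in\fX$ corresponds to the $z'_k\in\fX'$ nearest to $x'$. For $R>1$ pick such a $z'_k$, so that $B_R(x')\subset B_{R+1}(z'_k)$ and, by Lemma~\ref{l: balls}~\rmiii, $\mu'(B_{R+1}(z'_k))\lesssim\mu'(B_R(x'))$. Cover $B_{R+1}(z'_k)$ by balls $B_2(z'_j)$ with $z'_j\in\fX'\cap B_{R+3}(z'_k)$; by \eqref{f: RI due} the corresponding $z_j$'s satisfy $d(z_j,z_k)\leq R+3+\be$, hence $B_2(z_j)\subset B_{R+5+\be}(z_k)$. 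Summing over $j$, applying Lemma~\ref{l: growth} (valid for $R\geq R_0$) to replace $\mu'(B_R(x'))$ with a multiple of $\mu(B_{R+5+\be}(z_k))$, and Lemma~\ref{l: balls}~\rmiii\ to reabsorb the radial enlargement, one obtains
$$
A_R f(x')\lesssim A_{R+C_0}F(z_k)\leq \cM_\infty F(z_k);
$$
the bounded regime $1<R<R_0$ is handled separately, UBSC making all relevant volumes uniformly comparable.

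For the Lorentz-norm transfer, partition $X'$ into Voronoi cells $C'_k\subset B_1(z'_k)$; each has uniformly bounded measure by UBSC, and $\cM_\infty f\leq C\,\cM_\infty F(z_k)$ on $C'_k$. The crucial observation is that, by local doubling, the event $\{\cM_\infty F(z_k)>\al\}$ propagates to $\{\cM_\infty F>c\al\}$ throughout the unit ball $B_1(z_k)$: any ball of radius $R$ witnessing $A_RF(z_k)>\al$ also witnesses a comparable lower bound for the average around every point of $B_1(z_k)$, after a radial enlargement of size $1$ whose volume cost is controlled by Lemma~\ref{l: balls}~\rmiii. Combining with the finite overlap of $\{B_1(z_k)\}$ one deduces the level-set inequality
$$
\mu'\bigset{\cM_\infty f>\al}\lesssim \mu\bigset{\cM_\infty F>c\al}\quant\al>0,
$$
which yields $\bignorm{\cM_\infty f}{\lorentz{p}{s}{X'}}\lesssim\bignorm{\cM_\infty F}{\lorentz{p}{s}{X}}$. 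Chaining with the assumed bound of $\cM_\infty$ on $X$ and the transplant estimate of the first step proves \rmi. Part \rmii\ is analogous: an uncentred average over any ball $B\ni x'$ with $r_B>1$ is bounded by the centred average over the concentric ball around the nearest $z'_k$ of radius $r_B+1$, whose measure is comparable by Lemma~\ref{l: balls}~\rmiii, and the preceding argument then applies. The main obstacle is coordinating the three radial regimes so that all additive enlargements and multiplicative constants are absolute: Lemma~\ref{l: growth} controls the large-scale volume comparison across $\vp$, UBSC together with Lemma~\ref{l: UBSC} handles bounded radii, and the finite-overlap statements of Lemma~\ref{l: balls} glue the pieces together.
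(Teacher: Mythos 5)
Your architecture is essentially the same as the paper's: handle the bounded-radius regime by $L^1$--$L^\infty$ interpolation, build a transplant operator sending functions on $X'$ to piecewise-constant functions on $X$ whose values are grid averages of $\mod{f}$, prove a pointwise comparison between the large-scale maximal functions at paired grid points, and reassemble via a level-set inequality. Your $F=\sum_k c_k\chi_{B_2(z_k)}$ is the paper's $\La f=\cE F$ (with $F=(\pi_{2\kappa}f)\circ\vp$ and the partition of unity $\psi_\ell$) modulo UBSC-controlled normalisations, and your Steps agree with the paper's Steps~I--III.

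There is, however, a step that fails as written: the choice of a $1$-discretisation does not survive the transfer across $\vp$ when the quasi-isometry parameters are not small. First, to pull $\fX'$ back via $\vp$ you need $\fX'\subset\vp(X)$; since $\vp(X)$ is only $K$-dense in $X'$, a $1$-discretisation of $X'$ lying in $\vp(X)$ need not exist when $K\geq 1$. Second, even if it did, \eqref{f: RI due} only guarantees that the transferred points $z_k\in\vp^{-1}(\{z'_k\})$ are $(1-\be)$-separated in $X$, which is vacuous once $\be\geq1$; then the balls $B_2(z_k)$ need not have finite overlap, $\fX$ need not be a net in $X$ (the covering radius degrades to roughly $1+\be$, not $2$), and Lemma~\ref{l: balls}~\rmi\ cannot be invoked. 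The paper avoids both problems by taking $\fX'$ to be a maximal $\kappa$-separated subset of $\vp(X)$ with $\kappa:=\max(K,\be+2)$, which makes the transferred set $\fX$ at least $(\kappa-\be)\geq2$-separated, gives covering balls of radius $2\kappa$ in $X'$ and $2\kappa+\be$ in $X$, and is what $R_0$ in Lemma~\ref{l: growth} is calibrated to. Replacing your fixed scales $1$ and $2$ throughout by these $\kappa$-dependent scales repairs the argument without changing its structure; the Lorentz-norm transfer for the transplant operator is then most cleanly obtained, as in the paper, by checking boundedness $L^1\to L^1$ and $L^\infty\to L^\infty$ and interpolating, rather than by appealing to an equivalence with discrete cell averages, which needs more justification than you give.
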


\begin{proof}
	First we prove \rmi.
	Set $\kappa := \max(K,\be+2)$, where $K$ and $\be$ are as in Definition~\ref{def: RI}, and recall the constant $R_0$ defined at
	the beginning of the proof of Lemma~\ref{l: growth}.
	Clearly $\cM_\infty f \leq \cM^{R_0}\! f + \cM_{R_0} f$. Thus, it suffices to show that both~$\cM^{R_0}$ and $\cM_{R_0}$ satisfy the required norm estimates.

	\textit{Step I: estimate of $\cM^{R_0}$}.
	Suppose that $n$ is an integer in $[0,{R_0})$ and that~$R$ belongs to the interval $[n,n+1)$.  Then the LD property of $X'$ implies that
	$$
	\begin{aligned}
	A_R \mod{f}(z')
		=    \frac{\mu'\big(B_{n+1}(z')\big)}{\mu'\big(B_R (z')\big)} \, \frac{1}{\mu'\big(B_{n+1}(z')\big)} \, \int_{B_R(z')} \mod{f} \wrt \mu'
		\leq D_{R_0} \, A_{n+1} \mod{f}(z').
	\end{aligned}
	$$
	Therefore 
		$$
	\cM^{R_0} f(z')
	\leq \max_{1\leq n\leq {R_0}} \sup_{n\leq R< n+1} \, A_R \mod{f}(z')
	\leq D_{R_0}\, \sum_{n\leq {R_0}} \, A_{n+1} \mod{f}(z').
	$$
	Consider the set
	$$
	E_R:= \{(y',z') \in X'\times X': d'(y',z') < R \}.
	$$
	Observe that
	$$
	\begin{aligned}
		\bignorm{A_{n+1} \mod{f}}{\lu{X'}}
		& = \int_{X'}\frac{\!\!\wrt \mu'(z')}{\mu'\big(B_{n+1}(z')\big)} \,\int_{X'} \One_{E_{n+1}}(y',z') \, \mod{f(y')} \wrt\mu' (y') \\
		& = \int_{X'}\!\!\wrt \mu'(y') \, \mod{f(y')}\,\int_{B_{n+1}(y')} \frac{\wrt\mu' (z')}{\mu'\big(B_{n+1}(z')\big)}  \\
		& \leq \frac{V_{X'}(n+1)}{v_{X'}(n+1)}\, \bignorm{f}{\lu{\mu'}}.
	\end{aligned}
	$$
	As a consequence
	$$
	\begin{aligned}
		\bignorm{\cM^{R_0}\! f}{\lu{X'}}
		\leq {R_0}\, D_{R_0}\, \frac{V_{X'}(R_0+1)}{v_{X'}(1)}\, \bignorm{f}{\lu{X'}}
		\quant f \in \lu{X'}.
	\end{aligned}
	$$
	Trivially $\bignorm{\cM^{R_0}\! f}{\ly{X'}} \leq \bignorm{f}{\ly{X'}}$.
	By interpolation, it follows that $\cM^{R_0}$ is bounded on the Lorentz space $\lorentz{p}{r}{X'}$ whenever $1< p<\infty$ and $1\leq r\leq \infty$.

	A variant of the argument above yields the boundedness of {$\cM^{R_0}$} on the Lorentz space $\lorentz{1}{r}{X'}$ whenever $1< r\leq \infty$.
	We omit the details.

		\medskip
	\textit{Step II: the operator $\La$ and maximal operators}.  Let $\fX$ and $\fX'$ be as in the proof of Lemma~\ref{l: growth}.  Recall that
	$\{B_{2\kappa}(m'): m'\in \fX'\}$ is a covering of $X'$, and $\{B_{2\kappa+\be}(m):  m \in \fX\}$ is a covering of~$X$.

	Consider a ball $B_R(x')$ in $X'$, with $R\geq R_0$, and denote by ${\fX}_{R,x'}'$ the set of all~$z'$ in $\fX'$ such that $B_{2\kappa}(z')$ has nonempty
	intersection with $B_R(x')$.  Then
	$\ds
	\bigcup_{z'\in {\fX}_{R,x'}'} \, B_{2\kappa}(z')
	$
	covers $B_R(x')$, so that
	$$
	\begin{aligned}
	\int_{B_R(x')} \, f \wrt \mu'
		\leq \sum_{z'\in{\fX}_{R,x'}'} \, \int_{B_{2\kappa}(z')} \, f \wrt\mu'
		= \sum_{z'\in {\fX}_{R,x'}'} \, \pi_{2\kappa} f(z').
	\end{aligned}
	$$
	If $x'$ belongs to $\vp(X)$, then consider any point $x$ in $\vp^{-1}\big(\{x'\}\big)$.  Denote by $\fX_{R,x}$ the set of all points $z$ in $\fX$
	such that $B_{2\kappa+\be}(z)$ intersects the ball $B_R(x)$.  Note that if $z'$ belongs to $\fX_{R,x'}$, then $d'(z',x') < R+2\kappa$, so that
	$$
	d\big(\vp^{-1} (z'),x\big)
	\leq d'(z',x') + \be
	< R+2\kappa + \be.
	$$
Therefore, if $z'$ belongs to $\fX_{R,x'}$, then    {$(\vp_{|\fX})^{-1}$} (which is a single point in $\fX$) belongs to $\fX_{R,x}$.  Therefore
	\begin{equation} \label{f: int BRprime I}
	\int_{B_R(x')} \, f \wrt \mu'
	\leq \sum_{z\in \fX_{R,x}} \, F(z),
	\end{equation}
	where we have set
	\begin{equation} \label{f: F}
		F := (\pi_{2\kappa} f)\circ \vp.
	\end{equation}
	Consider the function
	$$
	\Psi
	:= \sum_{z\in\fX} \, \One_{B_{2\kappa+\be}(z)}.
	$$
	Observe that
	$
	1\leq \Psi \leq \om,
	$
	where $\om$ denotes the overlapping number of the covering $\{B_{2\kappa+\be}(z):  z \in \fX\}$ of~$X$.  Then for every $z$ in $\fX$ set
	$$
	\psi_z
	:= \Psi^{-1}\cdot \One_{B_{2\kappa+\be}(z)}.
	$$
	Define the function $\cE F$ on $X$ by
	\begin{equation} \label{f: cE F}
	\cE F
	:= \om\, \sum_{\ell\in \fX} \, F(\ell) \, \psi_\ell
	\quant x \in X.
	\end{equation}
	Since $1/\om \leq \psi_z \leq 1$ on $B_{2\kappa+\be}(z)$, the inequality $\cE F(z) \geq F(z)$ holds for every~$z$ in $\fX$.
	Then
	$$
	\begin{aligned}
		v_X(2\kappa+\be) \cdot F(z)
		& \leq \mu\big(B_{2\kappa+\be}(z)\big) \, F(z) \\
		& \leq \om\, \int_{B_{2\kappa+\be}(z)} \, \psi_z \wrt \mu \cdot F(z) \\
		& \leq \om\, \int_{B_{2\kappa+\be}(z)} \, \cE F \wrt \mu.
	\end{aligned}
	$$
	This estimate and \eqref{f: int BRprime I} imply that
	$$
	\int_{B_R(x')} \, f \wrt \mu'
	\leq \frac{\om}{v_X(2\kappa+\be)}\, \sum_{z\in \fX_{R,x}} \, \int_{B_{2\kappa+\be}(z)} \, \cE F \wrt \mu.
	$$
	Furthermore
	$$
	\sum_{z\in \fX_{R,x}} \, \int_{B_{2\kappa+\be}(z)} \, \cE F \wrt \mu
	\leq \om\, \int_{B_{R+2(2\kappa+\be)}(x)} \, \cE F \wrt \mu,
	$$
	because $B_{2\kappa+\be}(z)$ is contained in $B_{R+2(2\kappa+\be)}(x)$ for every $z$ in $\fX_{R,x}$.
	Altogether we have proved that for $R$ large and for each $x'$ in  ${\vp(X)}$ and for each $x$ in $\vp^{-1}\big(\{x'\}\big)$
	\begin{equation} \label{f: comparison integrals}
		\int_{B_R(x')} \, f \wrt \mu'
		\leq \frac{\om^2}{v_X(2\kappa+\be)}\, \int_{B_{R+2(2\kappa+\be)}(x)} \, \cE F \wrt \mu.
	\end{equation}
	Observe that
	$$
	\begin{aligned}
		\frac{\mu\big(B_{R+2(2\kappa+\be)}(x)\big)}{\mu'\big(B_R(x')\big)}
		 = 
		    \frac{\mu\big(B_{R+2(2\kappa+\be)}(x)\big)}{\mu\big(B_R(x)\big)}
		    \cdot \frac{\mu\big(B_{R}(x)\big)}{\mu'\big(B_R(x')\big)}
		 \leq \ga_{4\kappa+2\be} \, \Ga_0^{-1},
	\end{aligned}
	$$
	by Lemma~\ref{l: balls}~\rmiii\ (where the constant {$\ga_{4\kappa+2\be}$} is defined) and Lemma~\ref{l: growth}\
	(where the constant $\Ga_0$ is defined).  This, and \eqref{f: comparison integrals} imply that
	\begin{equation} \label{f: comparison integrals II}
		\cM_{{R_0+2\kappa}} f (x')
		\leq  \ga_{{4\kappa}+2\be} \, \Ga_0^{-1} \,\frac{\om^2}{v_X(2\kappa+\be)}\, \cM_{{R_0+6\kappa+2\beta}} (\cE F) (x)
\end{equation}
	for each $x'$ in $\vp(X)$, and every~$x$ in $\vp^{-1}\big(\{x'\}\big)$. 
	Consider the linear operator $\La$, acting on $\lu{X'}+\ly{X'}$, defined by
	$$
	\La f
	:= \cE F;
	$$
	$F$ and $\cE F$ are defined in \eqref{f: F} and \eqref{f: cE F}, respectively.  Clearly $\La$ maps functions on $X'$ to functions on $X$.

	We \textit{claim} that $\La$ is bounded from $\lorentz{p}{r}{X'}$ to $\lorentz{p}{r}{X}$ whenever $1< p<\infty$ and $1\leq r\leq \infty$.
	
	Indeed, $\La$ is bounded from $\ly{X'}$ to $\ly{X}$, for
	$$
	\bignorm{\cE F}{\ly{X}}
	\leq {\om^2}\, \bignorm{F}{\ly{\fX}}
	=    {\om^2}\, \bignorm{\pi_{ 2\kappa} f}{\ly{\fX'}}
	=    {\om^2\, V_{X'}({2\kappa})}\bignorm{f}{\ly{X'}}.
	$$
	Furthermore, $\La$ is bounded from $\lu{X'}$ to $\lu{X}$.  Indeed, notice that
	$$
	\begin{aligned}
	\bignorm{\cE F}{\lu{X}}
		& \leq \om \, V_X(2\kappa+\be) \, \sum_{z\in\fX} \, \mod{F(z)} \\
		& =    \om \, V_X(2\kappa+\be) \, \bignorm{F}{\lu{\fX}} \\
		& \leq \om \, V_X(2\kappa+\be) \, \bignorm{\pi_{2\kappa} \mod{f}}{\lu{\fX'}}.
	\end{aligned}
	$$
	The latter norm is equal to
	$$
	\begin{aligned}
		\sum_{z'\in\fX'} \, (\pi_{2\kappa} \mod{f}) (z')
		& \leq   \int_{X'} \Big(\sum_{z'\in\fX'} \, \One_{B_{2\kappa}}(z')\Big) \mod{f} \wrt\mu' \\
		& \leq  \om' \, \bignorm{f}{\lu{X'}},
	\end{aligned}
	$$
where $\om'$ denotes the overlapping number of the covering $\{B_{2\kappa}(z'): z'\in \fX'\}$ (see Lemma~\ref{l: balls} \rmi).  Thus,
	$$
	\bignorm{\cE F}{\lu{X}}
	\leq \om \, V_X(2\kappa+\be) \, \om' \, \bignorm{f}{\lu{X'}},
	$$
	as required.
	
	Now the claim follows from Marcinkiewicz's interpolation theorem.
	
\medskip
\textit{Step III: conclusion of the proof of \rmi}.
Suppose that $\al>0$ and that $x'$ is a point in $E_{\cM_\kappa f}(\al)$.  Denote by $\vr(x')$ one of the points in $\fX'$ closest to $x'$, and observe that
$d'(x',\vr(x')) < 2\kappa$, because $\{B_{2\kappa} (p'): p' \in \fX'\}$ is a covering of $X'$.

The triangle inequality implies that the ball $B_R(x')$ is contained in $B_{R+2k}(\vr(x'))$, so that
$$
\frac{1}{\mu'\big(B_R(x')\big)}\, \int_{B_R(x')} \! f \wrt \mu'
\leq \frac{\mu'\big(B_{R+2\kappa}(\vr(x'))\big)}{\mu'\big(B_R(x')\big)}\, \frac{1}{\mu'\big(B_{R+2\kappa}(\vr(x'))\big)}\, \int_{B_{R+2\kappa}(\vr(x'))}\!  f \wrt \mu'.
$$
Since $B_{R+2\kappa}(\vr(x'))$ is, in turn, contained in $B_{R+4\kappa}(x')$, Lemma~\ref{l: balls}~\rmiii\ implies that
$$
\frac{\mu'\big(B_{R+2\kappa}(\vr(x'))\big)}{\mu'\big(B_R(x')\big)}
\leq \frac{\mu'\big(B_{R+4\kappa}(x')\big)}{\mu'\big(B_R(x')\big)}
\leq \ga_{4\kappa}.
$$
Consequently
$$
\cM_{R_0} f(x') \leq \ga_{4\kappa} \, \cM_{{R_0+2\kappa}} f(\vr(x')).
$$
Given a point $z'$ in $\fX'$, the set of points $x'$ in $X'$ such that $\vr(x') =z'$ is contained in the ball $B_{2\kappa}(z')$, whence
$$
\mu'\big(E_{\cM_{R_0} f}(\al)\big)
\leq V_{X'}(2\kappa) \cdot  \sharp\big(\big\{z'\in \fX': \cM_{ {R_0+2\kappa}} f(z')>\al/\ga_{4\kappa}\big\}\big).
$$
Now we use \eqref{f: comparison integrals II}, and obtain that
$$
\mu'\big(E_{{\cM_{R_0}}}(\al)\big)
\leq V_{X'}(2\kappa) \cdot  \sharp\big(\big\{z\in \fX: \cM_{{R_0+6\kappa+2\beta}} (\cE F)(z)>\al/\si\big\}\big),
$$
where $\ds\si =\ga_{{4\kappa+2\beta}} \, \Ga_0^{-1} \,\frac{\om^2}{v_X(2\kappa+\be)}\,\ga_{4\kappa}$.
Observe that if $\cM_{{R_0+6\kappa+2\beta}} (\cE F) (z) > \al/\si$, and~$x$ is in $B_{2\kappa+\be}(z)$, then $\cM_{{R_0+8\kappa+3\beta}} (\cE F) (x) > C\al/\si$,
where 
\[
C:=\inf_{x\in X} \frac{\mu(B_{R_0+4\kappa+\beta}(x))}{\mu(B_{R_0+8\kappa+3\beta}(x))}>0
\] 
by the LD assumption. Therefore
$$
\begin{aligned}
	& \sharp\big(\big\{z\in \fX: \cM_{{R_0+6\kappa+2\beta}} (\cE F)(z)>\al/\si\big\}\big) \\
	& \leq \frac{1}{v_X(2\kappa+\be)} \, \mu\big(\big\{x\in X: \cM_{ {R_0+8\kappa+3\beta}} (\cE F)(x)>C \,\al/\si\big\}\big).
\end{aligned}
$$
Altogether we obtain that
$$
\mu'\big(E_{\cM_{R_0} f}(\al)\big)
\leq \frac{V_{X'}(2\kappa)}{v_X(2\kappa+\be)} \, \mu\big(E_{\cM_{{R_0+8\kappa+3\beta}} (\cE F)}(C \,\al/\si)\big).
$$
As a consequence
$$
\bignorm{\cM_{R_0} f}{\lorentz{p}{s}{X'}}
\leq \Big(\frac{V_{X'}(2\kappa)}{v_X(2\kappa+\be)} \Big)^{1/p} \Big(p\ioty  \mu\big(E_{\cM_{{R_0+8\kappa +3\beta}}(\cE F)}(C\al/\si)\big)^{s/p} \,
    \al^{s-1} \wrt \al\Big)^{1/s}.\,
$$
Changing variables and using the definition of the (quasi-) norm of $\lorentz{p}{s}{X'}$, we see that
$$
\bignorm{\cM_{R_0} f}{\lorentz{p}{s}{X'}}
\leq (\si/C)\, \Big(\frac{V_{X'}(2\kappa)}{v_X(2\kappa+\be)} \Big)^{1/p}\, \bignorm{\cM_{{R_0+8\kappa+ 3\beta}} \La f}{\lorentz{p}{s}{X}}.
$$
Since, by assumption, $\cM$, hence $\cM_{{R_0+8\kappa+3\beta}}$, is bounded from $\lorentz{p}{r}{X}$ to $\lorentz{p}{s}{X}$, and $\La$
is bounded from $\lorentz{p}{s}{X}$ to $\lorentz{p}{s}{X'}$,
$$
\bignorm{\cM_{R_0} \La f}{\lorentz{p}{s}{X}}
\leq \bigopnormto{\cM}{L^{p,r}(X);L^{p,s}(X)} \, \bignorm{\La f}{\lorentz{p}{r}{X}}
\leq C\, \bignorm{f}{\lorentz{p}{r}{X'}},
$$
as required to conclude the proof of Step~III, and of \rmi.

\smallskip
The proof of \rmii\ copies the strategy of the proof of \rmi\ with a few slight variants; we omit the details.
\end{proof}

\section{Connected sums}  \label{s: Connected}
One of the themes of this research is to understand to what extent the geometry at infinity of the manifold under consideration affects the
boundedness properties of $\cM$ and $\cN$.  The state of our knowledge is somehow embryonic, and we can only discuss a few, hopefully paradigmatic, examples.

In this section we show that by suitably gluing together two space forms with curvatures $-a^2$ and $-b^2$ the $L^p$-boundedness properties of $\cN$
change abruptly, whereas those of $\cM$ do not.  This is only partially surprising, given the fact that even on space forms the $L^p$-boundedness
properties of~$\cM$ and~$\cN$ are significantly different.

In particular, we consider the connected sum $M := M_a\sharp M_b$ of two simply connected
$m$-dimensional space forms $M_a$ and $M_b$ with curvatures $-a^2$ and $-b^2$, respectively.
The topology of $M$ is nontrivial, for $M$ has two ends.  The gluing of $M_a$ and $M_b$ can be taylored so that
\begin{equation} \label{f: dec M}
	M = \Om_a \cup K \cup \Om_b,
\end{equation}
where $K$ is a compact subset of $M$ with $\diam(K) = 3$, $\Om_a$ and $\Om_b$ are open subsets of $M$ isometric to $M_a\setminus \OV{B_1^a(o_a)}$
and $M_b\setminus\OV{B_1^b(o_b)}$, respectively, and the union above is disjoint.  We refer to $\Om_a$ and to $\Om_b$ as to the upper and
the lower leaf of $M$, respectively.

It is useful to consider also the subset $\Om_a'$ of $\Om_a$ isometric to $M_a\setminus \OV{B_3^a(o_a)}$
and the subset $\Om_b'$ of $\Om_b$ isometric to $M_a\setminus \OV{B_3^b(o_b)}$ and the compact set
$$
K'
:= M\setminus \big(\Om_a'\cup\Om_b'\big).
$$
Notice that $K' = \{x \in M: d(x, K) \leq 2\}$.

Denote by $M_\kappa$ the $m$ dimensional space form with curvature $-\kappa^2$ and Riemannian measure $\mu_\kappa$.  Set $q:=\e^{m-1}$. 
It is well known that there are constants~$\eta_1$ and~$\eta_2$, depending on $m$ and $\kappa$ such that
\begin{equation} \label{f: volume kappa}
	\eta_{1,\kappa}\, q^{\kappa R} 
	\leq \mu_\kappa\big(B_R(x)\big)
	\leq \eta_{2,\kappa}\, q^{\kappa R} 
	\quant R \ge 1 \quant x\in M_\kappa.
\end{equation}
Note also that there exists a constant $\eta_\kappa$ such that 
\begin{equation} \label{f: volume kappa I}
\mu_\kappa\big(B_R(x)\big) \sim \eta_b\, q^{\kappa R}
\quad\hbox{as $R$ tends to infinity}.
\end{equation}

\begin{theorem} \label{t: connected sum}
	The following hold:
	\begin{enumerate}
		\item[\itemno1]
			the maximal operator $\cM_\infty$ is of weak type $(1,1)$ on $M$;
		\item[\itemno2]
			the maximal operator $\cN_\infty$ is bounded on $\lp{M}$ if and only if $p=\infty$.
	\end{enumerate}
\end{theorem}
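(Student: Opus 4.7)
\medskip
\textbf{Proof plan for Theorem~\ref{t: connected sum}.}

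\emph{Plan for (i).} Since $M$ has Ricci curvature bounded from below by bounded geometry, it is locally doubling, so by the localisation discussion in the introduction it suffices to prove a weak type $(1,1)$ estimate for $\cM_\infty$. I split $f = f_a + f_K + f_b$ according to the decomposition $M=\Omega_a'\cup K'\cup \Omega_b'$ (extending $f$ by $0$ on the overlaps), and control each piece separately. For $f_K$, the compact support combined with local doubling allows a classical Vitali/Calder\'on--Zygmund argument to yield the desired weak bound. For $f_a$ supported in $\Omega_a'$, I distinguish two cases according to where $\cM_\infty f_a$ is evaluated. If $x\in\Omega_a$, I compare $M$-balls centred at $x$ with $M_a$-balls: since $\Omega_a\hookrightarrow M_a$ isometrically and any $M$-ball $B_R(x)$ reaching into $\Omega_b$ has volume at least that of the corresponding $M_a$-ball (the $\Omega_b$-tail adds volume to the denominator without adding mass in $\text{supp}(f_a)$), one obtains $\cM_\infty f_a(x)\leq \cM_{\infty}^{M_a}\tilde f_a(x)$ where $\tilde f_a$ is the zero extension; the weak $(1,1)$ bound on $M_a$ from Str\"omberg \cite{Str} then applies. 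If instead $x\in\Omega_b$, any ball $B_R(x)$ intersecting $\text{supp}(f_a)$ satisfies $R\geq d(x,K)$, and the $\Omega_b$-part of $B_R(x)$ already has volume $\gtrsim q^{b\, d(x,K)}$ by \eqref{f: volume kappa}, yielding $\cM_\infty f_a(x)\lesssim \|f_a\|_1 \, q^{-b\, d(x,K)}$. The distribution function of this exponentially decaying bound over $\Omega_b$ is controlled using the explicit shell volumes in $\Omega_b$. The symmetric argument treats $f_b$, and combining the three contributions finishes the proof of (i).

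\emph{Plan for (ii), easy direction.} Trivially $\|\cN_\infty f\|_{L^\infty(M)}\leq \|f\|_{L^\infty(M)}$, since each average is bounded by $\|f\|_\infty$.

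\emph{Plan for (ii), hard direction.} I have to show that $\cN_\infty$ is \emph{unbounded} on $L^p(M)$ for every finite $p$. By interpolation with the trivial $L^\infty$ bound via Marcinkiewicz, it suffices to prove unboundedness for arbitrarily large $p$: if $\cN_\infty$ were bounded on some $L^{p_0}$ with $p_0<\infty$, it would be bounded on all $L^p$ with $p\in[p_0,\infty]$. The mechanism I would exploit is the dramatic failure of doubling at scales comparable to $d(\cdot,K)$: for $c\in\Omega_a$ at depth $\rho$, the volume $|B_R(c)|$ jumps from $\sim q^{aR}$ (when $R<\rho$, ball confined to $\Omega_a$) to $\sim q^{aR}+q^{b(R-\rho)}$ (when $R>\rho$, ball crossing $K$ into $\Omega_b$); moreover the $\Omega_b$-portion of any such ball is essentially a ball in $M_b$ centred at the \emph{same} neck attachment point, independently of $c$. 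This rigidity forces the supremum defining $\cN_\infty$ to reach scales where the ball includes much of one leaf while having most of its volume on the other, producing averages that interpolate the two curvatures in a way that no single-leaf estimate (Ionescu's on $M_a$ or $M_b$) can handle. Concretely, I would construct $f_n=\chi_{E_n}$ for a carefully chosen sequence of sets $E_n$ (placed near the neck in $\Omega_a$, or in a narrow shell) and evaluate $\cN_\infty f_n$ along a matched sequence of points in $\Omega_b$, using balls centred in $\Omega_a$ whose radii are tuned to the curvatures $a,b$ and to $p$ so as to produce a lower bound $\cN_\infty f_n\gtrsim c_p$ on a set of volume blowing up exponentially faster than $|E_n|$.

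\emph{Main obstacles.} In (i), the principal technical point is verifying that the comparison $\cM_\infty f_a(x)\leq \cM_\infty^{M_a}\tilde f_a(x)$ for $x\in\Omega_a$ really uses only monotonicity of volume (so that no scale-dependent constants intervene). In (ii), the whole difficulty concentrates in the calibration of the counterexample: the candidates that come naturally to mind (characteristic functions of balls, shells, or exponential bumps) give ratios $\|\cN_\infty f_n\|_p/\|f_n\|_p$ that are bounded for $p>2$ on each leaf considered alone, and the extra growth coming from the neck must be extracted by a careful matching of the support of $f_n$, the evaluation point, and the centre and radius of the maximising ball that is simultaneously large on the $\Omega_a$-side and thin on the $\Omega_b$-side.
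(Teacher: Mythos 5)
Your plan for part (i) is essentially the paper's argument: the same decomposition $f=f_a+f_K+f_b$, the comparison to $M_a$ for evaluation points in $\Omega_a$, the exponential decay estimate $\cM_\infty f_a(x)\lesssim q^{-b\sigma_b(x)}\|f_a\|_1$ for $x$ deep in $\Omega_b$, and the trivial estimate near $K$. One imprecision: the direct inequality $\cM_\infty f_a(x)\le\cM_\infty^{M_a}\widetilde f_a(\varphi_a^{-1}(x))$ does not hold as stated, because the gluing can \emph{increase} distances inside $\Omega_a$ (paths in $M_a$ that cut through $B^a_1(o_a)$ are replaced by paths through $K$ of length up to $\diam K=3$). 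The paper instead shows that $B_R(x)\cap\Omega_a$ contains the image of $B^a_{R-3}(\varphi_a^{-1}(x))\setminus B^a_1(o_a)$, incurring a fixed radius loss that is then absorbed by local doubling. This is a minor point but your phrase ``uses only monotonicity of volume'' is too strong; the argument needs the radius loss and local doubling.

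For part (ii) you identify the correct mechanism (balls straddling the neck force averages that mix the two curvatures and break doubling catastrophically), but the actual construction is not pinned down, and the qualitative choices you sketch are off. The paper does \emph{not} place the test sets near the neck, nor does it tune radii to $p$. It takes an annulus $E_t$ at depth $t\to\infty$ in the \emph{lower} leaf $\Omega_b$ (curvature $-b^2$, $b>a$), an annulus $F_t$ at depth $r_t=(2(b/a)-1)t$ in $\Omega_a$, and for each $x\in F_t$ a ball $B_{R_t}(y)$ centred at a point $y\in\Omega_a$ on the radial geodesic from $x$ towards $K$, with $R_t=(b/a)t+O(1)$ independent of $p$. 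The calibration $aR_t\approx bt$ forces $|B_{R_t}(y)|\lesssim q^{bt}\asymp|E_t|$, so $\cN\One_{E_t}\ge c>0$ on all of $F_t$, while $|F_t|\asymp q^{(2b-a)t}\gg|E_t|\asymp q^{bt}$. Because this ratio blows up exponentially in $t$ for every fixed $a<b$, unboundedness on every $L^p$, $p<\infty$, follows at once; there is no need for the interpolation reduction to large $p$ that you propose. The piece missing from your proposal is precisely this quantitative matching of depths and radii ($r_t$, $R_t$), which you correctly flag as the crux but do not supply.
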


\begin{proof}
	In order to simplify the proof we shall assume that the metric $g$ on $M$ has the form $\wrt t^2 + \si(t)^2 \, g_{\BS^{n-1}}$, where 
	\begin{equation}\label{eq:model}
	\si(t)
	= \begin{cases}
	a^{-1}\, \sinh (at) & \hbox{if $t>1$} \\
	b^{-1}\, \sinh (-bt) & \hbox{if $t<-1$}, 
	\end{cases}
	\end{equation}
	and $\si$ it is strictly positive on $[-1,1]$.
	However, since compact perturbations of the metric do not alter our conclusions, because of Theorem~\ref{t: quasi iso}, the results remains true with different, 
	possibly non-rotationally symmetric, gluings.

	For the duration of this proof, for every measurable subset $E$ of $M$ we shall write $\mod{E}$ instead of $\mu(E)$, for brevity.
	We also write $\bignorm{g}{1}$ instead of $\bignorm{g}{\lu{M}}$.

	First we prove \rmi.  It is not hard to check that it suffices to prove the desired weak type $(1,1)$ estimates for the maximal operator
	$$
	\cM_\infty' f(x)
	:= \sup_{r> 7} \, \frac{1}{\bigmod{B_r(x)}} \, \int_{B_r(x)} \, \mod{f} \wrt \mu
	$$
	(the supremum is taken only over balls of radius greater than $7$, whereas in the definition of $\cM_\infty$ the
	supremum is taken over balls of radius greater than $1$).
	Suppose that $f$ is in $\lu{M}$; we may assume that $f$ is nonnegative.  Write
	$$
	f = f_a + f_K + f_b,
	$$ \
	where
	$$
	f_a = f\, \One_{\Om_a}, \quad
	f_b = f\, \One_{\Om_b} \quad\hbox{and}\quad
	f_K = f\, \One_{K}.
	$$
	Since
	$$
	\big\{\cM_\infty f> \la\big\}
	\subseteq \big\{\cM_\infty f_a> \la/3\big\}
	\cup \big\{\cM_\infty f_K> \la/3\big\}
	\cup \big\{\cM_\infty f_b> \la/3\big\},
	$$
	it suffices to prove that there exists a constant $C$ such that
	\begin{equation} \label{f: estimate fa}
		\bigmod{\big\{\cM_\infty f_a> \la/3\big\}}
	\leq \frac{C}{\la} \, \bignorm{f_a}{1}
	\quant \la >0,
	\end{equation}
	and that similar estimates hold with $f_K$ and $f_b$ in place of $f_a$.

	Denote by $\vp_a$ the isometry between $M_a\setminus \OV{B_1^a(o_a)}$ and $\Om_a$, which was alluded to at the beginning of this section.  Note that
	$$
	\bignorm{f_a \circ \vp_a}{\lu{M_a}}
	= \bignorm{f_a}{1},
	$$
	because the support of $f_a$ is contained in $\Om_a$ and $\mu = (\vp_a)^*\mu$.
	
	\smallskip
	In order to prove \eqref{f: estimate fa}, we estimate $\cM_\infty f_a(x)$ by considering the cases where
	$x$ belongs to $\Om_a'$, to $K'$ and to $\Om_b'$ separately.

	\smallskip
	\textit{Suppose first} that $x$ belongs to $\Om_a'$.  We need to estimate $\mu\big(B_R(x)\big)$ from below.
	Assume that $R\geq 5$, and set
	$$
	E_R (x) := \vp_a \big(B_{R-3}^a(\vp_a^{-1}(x)) \setminus B_1^a(o_a)\big).
	$$
	We \textit{claim} that $B_R(x) \cap \Om_a$ contains~$E_R$.

	Indeed, suppose that $y$ belongs to $E_R$ (hence to $\Om_a$), and consider the geodesic~$\ga$ joining $\vp_a^{-1}(x)$ and $\vp_a^{-1}(y)$ in $M_a$.

	If $\ga$ does not intersect $B_1(o_a)$, then $\vp_a(\ga)$ does not intersect $K$, so that
	$$
	d(x,y)
	\leq \ell\big(\vp_a(\ga)\big)
	=    \ell_a(\ga)
	<    R-3.
	$$
	If $\ga$ intersects $B_1^a(o_a)$, denote by $\xi$ and $\eta$ the (possibly overlapping) points in $S_1(o_a)$ that belong to $\ga$.
	Then $\vp_a(\xi)$ and $\vp_a(\eta)$ are points in $K$, so that
	$$
	d\big(\vp_a(\xi),\vp_a(\eta)\big)
	\leq \diam\, K
	\leq 3.
	$$
	Furthermore 
	$$
	d_a\big(\vp_a^{-1}(x), \xi) + d_a\big(\eta,\vp_a^{-1}(y)\big)
	= d_a \big(\vp_a^{-1}(x), \vp_a^{-1}(y)\big) - d_a(\xi,\eta)
	< R-3.
	$$
	Therefore
	$$
	\begin{aligned}
	d(x,y)
		& \leq d\big(x,\vp_a(\xi)\big) + d\big(\vp_a(\xi),\vp_a(\eta)\big) + d\big(\vp_a(\eta),y\big) \\
		& \leq d_a\big(\vp_a^{-1}(x),\xi\big) + \diam\, K + d_a\big(\eta,\vp_a^{-1}(y)\big) \\
		& < R,
	\end{aligned}
	$$
	whence $y$ is in $B_R(x)$, and the claim is proved.
	Since, by Str\"omberg's result,  $\cM_{\infty,a}$ (the maximal function on $M_a$) is of weak type $(1,1)$, there exists a constant $C$ such that
	$$
	\mu_a\big(\big\{y\in \vp_a^{-1} (\Om_a'): \cM_{\infty,a} (f_a\circ \vp_a) (y) > \la/(3C)\big\}\big)
	\leq  \frac{C}{\la}\, \bignorm{f_a\circ \vp_a}{\lu{M_a}}.
	$$
	Finally, observe that $\bignorm{f_a\circ \vp_a}{\lu{M_a}} = \bignorm{f_a}{1}$, so that
	\begin{equation} \label{f: fa I}
		\bigmod{\{x\in \Om_a': \cM_{\infty}f_a (x) > \la/3\}}
	\leq \frac{C}{\la}\, \bignorm{f_a}{1}
	\quant \la > 0.
	\end{equation}

	\medskip
	Next, \textit{suppose that} $x$ is in $K'$.  Since the sectional curvature of $M$ is bounded from above and the injectivity radius is positive, 
	$M$ possesses the uniform ball size condition (see \eqref{f: def UBSC}), so that
	\begin{equation} \label{f: constant al}
	\al := 
		\inf_{x\in M} \, \mu\big(B_1(x)\big)
	>0.
	\end{equation}
	Thus, trivially
	$$
	\frac{1}{\mu\big(B_R(x)\big)} \, \int_{B_R(x)} f_a \wrt\mu
	\leq \al^{-1}\, \bignorm{f_a}{1}
	\quant x \in K'.
	$$
	Hence
	$$
	\bigmod{\big\{x\in K': \cM_\infty f_a (x) > \la/3\big\}}
	\leq
	\begin{cases}
		\mod{K'} & \hbox{if $\la<(3/\al)\bignorm{f_a}{1}$} \\
		0    	& \hbox{otherwise},
	\end{cases}
	$$
	so that
	\begin{equation} \label{f: fa II}
		\bigmod{\big\{x\in K': \cM_\infty f_a (x) > \la/3\big\}}
		\leq \frac{3}{\al}\, \frac{\mod{K'}}{\la} \, \bignorm{f_a}{1}.
	\end{equation}

	\medskip
	Finally, \textit{suppose that} $x$ is in $\Om_b'$.  Set
	$$
	\de_a(x) := d\big(x, \Om_a) \qquad\hbox{and}\qquad \si_b(x) := d(x, \partial \Om_b).
	$$
	Notice that if~$R$ is less than $\de_a(x)$, then $B_R(x)$ has empty intersection with~$\Om_a$, whence the average
	$\ds \frac{1}{\mod{B_R(x)}} \, \int_{B_R(x)} f_a \wrt\mu$ vanishes.  Thus, we can assume that $R\geq \de_a(x)$, and
	$$
	\sup_{R\geq \de_a(x)}\,\frac{1}{\bigmod{B_R(x)}} \, \int_{B_R(x)} f_a \wrt\mu
	\leq \frac{1}{\bigmod{B_{\de_a(x)}(x)}} \, \bignorm{f_a}{1}.
	$$
	Observe that
	$$
	\de_a(x)
	\geq \si_b(x)
	= d_b\big(\vp_b^{-1} (x), o_b\big)-1.
	$$
	Since $\vp_b$ is an isometry between $B_{\si_b(x)}^b\big(\vp_b^{-1} (x)\big)$ and $B_{\si_b(x)}(x)$, we can conclude that
	$$
	\bigmod{B_{\de_a(x)}(x)}
	\geq \mu_b\big(B_{\si_b(x)}^b\big(\vp_b^{-1}(x)\big)\big)
	\geq {\eta_{1,b} \, \, q^{b\si_b(x)}}
	$$
	(see \eqref{f: volume kappa} and recall that $q = \e^{m-1}$). 
	Altogether we have proved that
	$$
	\cM_\infty f_a (x)\leq
	\eta_{1,b}^{-1}\, q^{- b\si_b(x)} \bignorm{f_a}{1} \,
	\quant x \in \Om_b.
	$$
	Therefore
	$$
	\Bigmod{\Big\{x\in \Om_b': \cM_\infty f_a (x) > \la/3\Big\}}
	\leq \Bigmod{\Big\{x\in \Om_b': {q^{-b\si_b(x)} > \frac{\eta_{1,b}\,  \la}{3\,\norm{f_a}{1}}}\Big\}}.
	$$
	The latter measure is dominated by 
	$$
	\mu_b\Big(\Big\{v\in M_b: {q^{-bd_b(v, o_b)} > \frac{\eta_{1,b}\, \la}{3q^{b}\,\norm{f_a}{1}}}\Big\}\Big).
	$$
	Since the set above is {$B_{r(\la)}^b(o_b)$}, where
	$$
	r(\la) := {\frac{1}{b} \, \log_q\frac{3q^{b}\norm{f_a}{1}}{{\eta_{1,b}}\,\la}},
	$$
	we can conclude that
	\begin{equation} \label{f: fa III}
		\Bigmod{\Big\{x\in \Om_b': \cM_\infty f_a (x) > {\eta_{1,b}}\,\la/3\Big\}}
		\leq {\frac{3{\eta_{2,b}}\,q^{b}\norm{f_a}{1}}{{\eta_{1,b}}\,\la}}.
	\end{equation}

	By combining \eqref{f: fa I}, \eqref{f: fa II} and \eqref{f: fa III}, we obtain \eqref{f: estimate fa}, as required.

	A similar line of reasoning leads to the analogue of \eqref{f: estimate fa}, with $f_b$ in place of $f_a$.

	\medskip
	It remains to prove that
	\begin{equation} \label{f: estimate fK}
		\bigmod{\big\{\cM_\infty f_K> \la/3\big\}}
	\leq \frac{C}{\la} \, \bignorm{f_K}{1}
	\quant \la >0.
	\end{equation}
	Notice that $\cM_\infty f_K \leq \al^{-1}\, \norm{f_K}{1}$; the constant $\al$ is defined in \eqref{f: constant al}.

	Suppose that $d(x,K) \geq 3$.   Then
	$$
	\frac{1}{\bigmod{B_R(x)}}
	\int_{B_R(x)} f_K \wrt \mu
	\leq \frac{1}{\bigmod{B_{d(x,K)}(x)}}\, \bignorm{f_K}{1}.
	$$
	Observe that $B_{d(x,K)}(x)$ contains $\vp_a\big(B_{\si_a(x)-1}^a\big(\vp_a^{-1}(x)\big)\big)$ if $x$ belongs to~$\Om_a$, and it
	contains $\vp_b\big(B_{\si_b(x)-1}^b\big(\vp_b^{-1}(x)\big)$ if $x$ belongs to~$\Om_b$.  Hence
	$$
	\bigmod{B_{d(x,K)}(x)}
	\geq
	\begin{cases}
		{\eta_{1,a}}\, q^{a(\si_a(x)-1)}
		& {\mathrm{if} \, x\in \Om_a} \\
		{\eta_{1,b}}\, q^{b(\si_a(x)-1)}
		& {\mathrm{if} \, x\in \Om_b}.
	\end{cases}
	$$
	Thus,
	$$
	\begin{aligned}
		& \cM_\infty {f_K} (x) \\
		& \leq \big({\eta_{1,a}^{-1}}\,q^{-a(\si_a(x)-1)} \, \One_{\Om_a'}(x) + {\eta_{1,b}^{-1}}\,q^{-b(\si_b(x)-1)} 
			\, \One_{\Om_b'}(x) 
		+ \al^{-1} \, \One_{K'}(x) \big)
			\, \bignorm{f_K}{1},
	\end{aligned}
	$$
	so that $\bigmod{\big\{x \in M: \cM_\infty f_K (x)} > \la/3\big\}$ is dominated by the sum of the following three summands:
	$$
	\Bigmod{\Big\{x \in \Om_a': q^{-a\si_a(x)} > \frac{{\eta_{1,a}}\, \la}{9q^{a}\,\norm{f_K}{1}}\Big\}},
	\quad
	\Bigmod{\Big\{x \in \Om_b': q^{-b\si_a(x)} > \frac{{\eta_{1,b}}\, \la}{9q^{b}\,\norm{f_K}{1}}\Big\}}
	$$
	and
	$$
	\Bigmod{\Big\{x \in M: \One_{K'} (x) > \frac{\al\la}{9 \,\norm{f_K}{1}}\Big\}}.
	$$
	Set $\ga:= 9\norm{f_K}{1}/\al$.  A straightforward computation shows that the latter measure is equal to $\mu(K') \, \One_{(0,\ga)}(\la)$, 
	Therefore
	$$
	\Bigmod{\Big\{x \in M: \One_{K'} (x) > \frac{\al\la}{9 \,\norm{f_K}{1}}\Big\}}
	\leq \frac{9\, \mu(K')}{\al\, \la} \,  \norm{f_K}{1}.
	$$
	By arguing much as above one can show that the first two summands are dominated by 
	$9 \,{\eta_{2,a}}\,q^a\,\norm{f_K}{1}/({\eta_{1,a}}\la)$ and 
	$9 \,{\eta_{2,b}}\, q^b\,\norm{f_K}{1}/({\eta_{1,b}}\la)$, respectively.  
	By combining the estimates above we see that \eqref{f: estimate fK} holds, thereby completing the proof of \rmi.

	\medskip
	Next, we prove \rmii. Clearly, $\cN_\infty$ is bounded in $L^\infty(M)$. Hence suppose $p<\infty$.  For a large positive number $t$, consider the annuli
	$$
	E_t := \big\{x \in M_b: t\leq d(x,o_b) \leq t+1\big\}
	$$
	and
	$$
	F_t := \big\{x \in M_a: r_t \leq d(x,o_a) \leq r_t+1 \big\},
	$$
	where $r_t:= \big(2(b/a) - 1\big)\, t$.  We abuse the notation, and still denote by $E_t$ and $F_t$ the corresponding subsets in $M$.
	{By \eqref{f: volume kappa I}, given any  small preassigned positive number $\vep$, the estimates 
	\begin{equation} \label{f: annuli b I}
		(1-\vep)\, \eta_b\, \big(q^b-1\big) \, q^{bt} 
		\leq \bigmod{E_t} 
		\leq (1+\vep)\, \eta_b\, \big(q^b-1\big) \,q^{bt}
	\end{equation}
	and 
	\begin{equation} \label{f: annuli b II}
		(1-\vep)\, \eta_a\, \big(q^a-1\big) \, q^{at} 
		\leq \bigmod{F_t} 
		\leq (1+\vep)\, \eta_a\, \big(q^a-1\big) \,q^{at}.
	\end{equation}
	hold for every $t$ large enough.
	}
	We want to show that there exists a positive constant $c$, independent  of $t$, such that the level set $\{\cN \One_{E_t}>c\}$ contains $F_t$
	for every $t$ large enough.

	For each $x$ in $F_t$, consider the geodesic joining $x$ and~the point $z_a$ of $K$ closest to~$x$: clearly such geodesic ``agrees" with the geodesic
	joining $x$ and~$z_a$ in~$M_a$.  Hence $r_t-1\leq d(x,z_a) \leq r_t$.  Denote by $y$ the unique point on such geodesic
	at distance $(b/a) \, t+2$ from $x$.
	Of course $y$ depends on~$x$, but this has no consequence for what follows.  Since the gluing 
	is assumed to have the form \eqref{eq:model}, this geodesic then crosses $K$ and hits the boundary of the lower
	leave at a point~$z_b$. 
	
	Since both $z_a$ and~$z_b$ belong to~$K$, and, by construction, the diameter of $K$ is $\leq 3$, we have $d(z_a,z_b) \leq 3$.  Notice that
	$$
	d(y,z_a)
	= d(x,z_a)-d(x,y)
	= d(x,z_a)-(b/a)\, t -2,
	$$
	whence
	$$
	\big((b/a)-1\big)\, t - 3 \leq d(y,z_a) \leq \big((b/a)-1\big)\, t -2
	$$
	and
	$$
	\big((b/a)-1\big)\, t -3 \leq d(y,z_b) \leq \big((b/a)-1\big)\, t +1.
	$$
	Now, each point in $E_t$ is at distance at most $\be + t+1$ from $z_b$, where $\be$ denotes the length of the shortest path on the unit sphere
	of $M_b$ between $z_b$ and its antipodal point.  Note that, by homogeneity, $\be$ does not depend on the point $z_b$ considered.
	Altogether, we see that the distance from $y$ to each point of $E_t$ is at most
	$$
	\big((b/a)-1\big)\, t +1 + \be + t+1
	= (b/a)\, t + \be +2.
	$$
	
	Set $R_t :=(b/a) \, t +\be+ 2$.  The estimates above imply that~$B_{R_t}(y)$ contains both $x$ and $E_t$.
	Therefore, given $x$ in $F_t$, there exists~$y$ such that
	$$
	\cN \One_{E_t} (x)
	\geq \frac{1}{\bigmod{B_{R_t}(y)}} \, \int_{B_{R_t}(y)} \, \One_{E_t} \wrt \mu
	= \frac{\bigmod{E_t}}{\bigmod{B_{R_t}(y)}}.
	$$
	Thus, it remains to estimate $\bigmod{B_{R_t}(y)}$ from above.  Simple geometric considerations show that for $t$ large $B_{R_t}(y)$ is contained in
	$$
	\big[B_{R_t}^a(y) \setminus B_1^a(o_a)\big] \cup K \cup \big[B_{t+\be+2}^b (o_b)\setminus B_1^a(o_a)\big].
	$$
	The measure of this set is bounded above by
	$$
	\eta_{2,a} \, q^{aR_t} + \mod{K} + \eta_{2,b} \, q^{b(t+2\be+2)}.
	$$
	Since $aR_t= bt + a\be+2a$, we see that
	{
	$$
	\bigmod{B_{R_t}(y)} \leq \rho \, q^{bt},
	$$
	where $\rho$ is a constant, depending on $\be$, $a$, $b$ and the dimension $m$}.
Altogether, we can conclude that
$$
	\cN \One_{E_t} (x)
	\geq {\frac{\bigmod{E_t}}{\rho \, q^{bt}}
	\geq (1-\vep)\, \eta_b\, \big(q^b-1\big) \, \rho^{-1}}
	\quant x \in F_t. 
$$
Now, set $\ds c := (1-\vep)\, \eta_b\, \big(q^b-1\big) \, \rho^{-1}$.  Since $\cN \One_{E_t} \geq c$ on~$F_t$,
$$
	\frac{\bignormto{\cN \One_{E_t}}{p}{p}}{\bignormto{\One_{E_t}}{p}{p}}
	\geq c^p \, \frac{\bigmod{F_t}}{\bigmod{E_t}}.
$$
If $\cN$ were bounded on $\lp{M}$ for some $p$ in $[1,\infty)$,
then there would exist a constant~$C$, independent of $t$, such that $\bigmod{F_t} \leq C\, \bigmod{E_t}$ for all large $t$.
{Now \eqref{f: annuli b I} and \eqref{f: annuli b II} would imply that 
$$
(1-\vep)\, \eta_a\, \big(q^a-1\big) \,q^{ar_t}
\leq (1+\vep)\, \eta_b\, \big(q^b-1\big) \,q^{bt}
$$
for every $t$ large enough.  Clearly this is impossible, because $b>a$ by assumption.}

This concludes the proof of \rmii, and of the proposition.
\end{proof}

\section{Str\"omberg's counterexample} \label{s: Stromberg}

Suppose that $0<a<b$ and consider the positive function $\Psi$ such that
$$
\Psi(y)^2
:= \frac{1}{y^2}\, \Big[\frac{1}{b^2} + \frac{1/a^2-1/b^2}{y+1}\Big]. 
\quant y>0.
$$
It is straightforward to check that $\Psi$ is decreasing on $(0,\infty)$.
Consider the upper half plane $\Pi$, endowed with the Riemannian metric
\begin{equation} \label{f: ds2}
g
= \Psi(y)^2 \, \big(\!\wrt x^2 + \wrt y^2\big).
\end{equation}
Denote by $d$ the associated distance, by $B_R(z)$ the corresponding (open) ball with centre $z$ and radius $R$, and by $\mu$ the associated Riemannian
measure.
\smallskip
Note that
$$
1\leq (by)^2 \, \Psi(y)^2 \leq \Big(\frac{b}{a}\Big)^2
\quant y >0.
$$
Thus, $\Psi (y)^2 = \la^2(y) \, /(by)^2$, where $\la$ satisfies the bound $1< \la < b/a$.  Moreover, the Gaussian curvature of $(\Pi,g)$ is given by
\[
	K=-\frac{1}{2\Psi^{2}} \, \partial_y^2 \big[{\log}\big(\Psi^2) \big].  
\]
{A straightforward, albeit tedious, computation shows that $-b^2 \leq K \leq -a^2$.}

The purpose of this section is to give full details of the proof of the following result.

\begin{theorem} \label{t: Str}
	Suppose that $0<a<b$.  The following hold:
	\begin{enumerate}
		\item[\itemno1]
		if $b<2a$, then the operator $\cM_\infty$ is unbounded on $\lp{\Pi}$ for all $p<b/a$;
		\item[\itemno2]
		if $b>2a$, then the operator $\cM_\infty$ is unbounded on $\lp{\Pi}$ for all $p<\infty$.
	\end{enumerate}
\end{theorem}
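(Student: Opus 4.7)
The plan is to construct, for each $p$ in the asserted range, a sequence $(f_n)$ of nonnegative functions in $L^p(\Pi)$ with $\|\cM_\infty f_n\|_p/\|f_n\|_p\to\infty$. The key heuristic is the inhomogeneity of~$g$: since the Gaussian curvature is approximately $-a^2$ near $y=0$ and $-b^2$ near $y=\infty$, geodesic balls grow like $e^{aR}$ in the $a$-region $\{y\le 1\}$ but like $e^{bR}$ in the $b$-region $\{y\ge 1\}$, and this asymmetry is what constrains $L^p$-boundedness of~$\cM_\infty$. I would take $f_n = \One_{E_n}$ where $E_n$ is a geodesic ball in the mixed metric of radius $\rho_n$ centred at a point $z_n$ in the $a$-region at depth $T_n$, with $\rho_n, T_n\to\infty$ chosen to optimise the contrast; then $|E_n|\asymp e^{a\rho_n}$ if $E_n$ is contained in the $a$-region, and $|E_n|\asymp e^{b\rho_n}$ if it extends into the $b$-region.

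The central computation is a sharp lower bound on $\cM_\infty f_n(q)$ for points $q$ outside $E_n$, obtained by choosing for each $q$ a specific averaging ball $B_{R(q)}(q)$ and computing $|E_n\cap B_{R(q)}(q)|/|B_{R(q)}(q)|$. One distinguishes two averaging strategies: choosing $R(q)$ just large enough to have $B_{R(q)}(q)$ first touch $E_n$ (so that the intersection lies near the boundary of $E_n$ and the ball volume is minimised), versus choosing $R(q) = d(q,z_n) + \rho_n$ (so that $E_n\subset B_{R(q)}(q)$). In either case, the Euclidean-disc description of the $-a^2$- and $-b^2$-hyperbolic balls yields an explicit lower bound of the form $\cM_\infty f_n(q)\gtrsim \delta_n\cdot e^{-\sigma_n(q)}$, where $\sigma_n$ is expressible in terms of the mixed hyperbolic distance from $q$ to $z_n$ and of the two curvature regimes traversed by the geodesic joining them. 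Integrating the $p$-th power over the region where this lower bound is nontrivial, whose $\mu$-measure grows exponentially at a rate controlled by the two exponents $a$ and $b$, yields $\|\cM_\infty f_n\|_p^p\gtrsim C_n\,|E_n|$ with $C_n\to\infty$ precisely when $p$ falls below the claimed critical threshold. In part~(i) the first averaging strategy produces divergence for $p<b/a$; in part~(ii), where $b>2a$, the second strategy produces a much larger $\delta_n$ and the divergence then holds for every finite~$p$.

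The main obstacle is the sharp estimate of $\cM_\infty f_n$ on the complement of $E_n$: one must simultaneously optimise over the averaging radius~$R$ and combine the Euclidean-disc descriptions of the $-a^2$- and $-b^2$-hyperbolic balls across the transition $y\approx 1$, keeping track of which of the two averaging strategies is optimal in each sub-region of~$\Pi$. In particular, for part~(ii) one must identify that the second averaging strategy strictly dominates when $b>2a$, and extract the precise exponential factor that arises, which is what promotes the unboundedness to every $L^p$ with $p<\infty$. The thresholds $b/a$ (for part~(i)) and $\infty$ (for part~(ii)) arise as the values of $p$ below which the volume of the level set $\{\cM_\infty f_n\ge\delta_n\}$ outpaces $\|f_n\|_p^p$ to the power $p$.
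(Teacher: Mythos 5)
Your proposal captures the right heuristic --- the dichotomy comes from competing exponential growth rates in the $y\to0$ ($-a^2$) and $y\to\infty$ ($-b^2$) regimes --- but it does not identify the actual mechanism of the paper's proof, and at the level of detail you give it cannot be completed.

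First, your test functions differ materially from the paper's, in a way that matters. For part~\rmi\ the paper uses $f_t=\One_{B_1(it)}$, a \emph{fixed}-radius ball drifting into the $b$-region $\{y\to\infty\}$, so that $\norm{f_t}{p}$ stays bounded; the level set is the thin strip $E_t=(-t,t)\times(1,2)$ near the transition line $y=1$. For part~\rmii\ the test set is $\One_{E_t}$ itself, and the proof shows $\cM_\infty\One_{E_t}\gtrsim\la_0$ on a second strip $F_t=(-t,t)\times(t^{2/\tau-1},2t^{2/\tau-1})$ deep in the $a$-region, with $\mod{F_t}/\mod{E_t}\to\infty$. Your proposal instead takes $E_n$ to be a geodesic ball of radius $\rho_n\to\infty$ centred deep in the $a$-region; but then $\norm{f_n}{p}^p=\mod{E_n}$ itself diverges and --- more seriously --- as $\rho_n$ grows the ball will cross the transition region, so the clean estimate $\mod{E_n}\asymp \e^{a\rho_n}$ you rely on fails, which is exactly the kind of volume-of-a-large-ball issue that requires a careful lemma.

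Second, and this is the substantive gap: the entire argument pivots on a sharp volume estimate for large balls in the mixed metric, namely $\bigmod{B_R(i)}\leq C\e^{aR}$ precisely when $b<2a$ (Lemma~\ref{l: geometry}~\rmiv\ in the paper). This is obtained by decomposing $B_R(i)$ into the part below the horizontal-tangent geodesic $\ga_{i,0}$ (estimated via the comparison $d\geq d_a-\be_{a,b}$), the part in $\OV\Pi_1$ (estimated as $\lesssim\e^{bR/2}$ by an explicit Euclidean computation), and the residual ``fan'' region $F_R$, whose measure is controlled by the geometric series $\sum_k\e^{k(b/2-a)}$. The convergence or divergence of this series --- i.e.\ whether $\e^{bR/2}$ is dominated by $\e^{aR}$ --- is exactly where the threshold $b<2a$ versus $b>2a$ enters, and it is the source of both the $b/a$ exponent in part~\rmi\ and the total breakdown in part~\rmii. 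Your proposal speaks only vaguely of ``combining the Euclidean-disc descriptions across the transition $y\approx1$''; it contains no analogue of this decomposition, no analogue of the geodesic lemma (Lemma~\ref{l: geodesic through i}) needed to make the comparison with $d_a$ rigorous, and no mechanism that would isolate the exponent $b/2$. Without this, the ``optimise over two averaging strategies'' step cannot be carried out, and in particular it is unclear why the second strategy would produce unboundedness for \emph{every} finite $p$ when $b>2a$ rather than merely a worse finite threshold.
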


\begin{remark}
	Part \rmi\ of this proposition is due to Str\"omberg (see \cite[p.~126]{Str}, where only some indications concerning its
	proof are given).  Here we provide full details of the proof.  We emphasise that such proof is reminiscent of the proof of the corresponding estimate
	in the case of trees with bounded geometry \cite[Proposition~3.3~\rmi]{LMSV}.
\end{remark}

For each $\kappa >0$, consider the positive function $\Psi_\kappa$ such that
$$
\Psi_\kappa(y)^2
:= \frac{1}{\kappa^2y^2}
\quant y>0.
$$
Denote by $d_\kappa$ the hyperbolic metric corresponding to the line element
$$
g_\kappa
= \Psi_\kappa(y)^2 \, \big(\!\wrt x^2 + \wrt y^2\big),
$$
by $B_R^\kappa(z)$ the corresponding ball with centre $z$ and radius $R$, and by $\mu_\kappa$ the associated Riemannian measure.
Note that
\begin{equation} \label{f: comparison a and b}
\Psi_b(y)
\leq \Psi(y)
\leq \Psi_a(y)
\quant y>0,
\end{equation}
that $\ds\Psi(y)^2 \sim (by)^{-2}$ as $y$ tends to $\infty$, and that  $\ds\Psi(y)^2 \sim (ay)^{-2}$ as $y$ tends to $0$.
Thus, $d$ is very much similar to $d_b$ in the region where $y$ is large and to $d_a$ when $y$ is small.  Clearly \eqref{f: comparison a and b} implies that
\begin{equation} \label{f: comparison a and b II}
	d_b\leq d \leq d_a
	\quad\hbox{and}\quad
	B_R^a(z) \subseteq B_R(z) \subseteq B_R^b(z)
\end{equation}
for every $z$ in $\Pi$ and every positive $R$.  We shall denote by $\ell(\ga)$ and $\ell_\kappa(\ga)$ the length of the curve $\ga$ with
respect to the distance $d$ and $d_\kappa$, respectively.

\begin{lemma} \label{l: L1 rests}
	The following hold:
	\begin{enumerate}
		\item[\itemno1]
			${\Psi_a}-{\Psi}$ is bounded on $(0,1)$; 
		\item[\itemno2]
			${\Psi}-{\Psi_b}$ belongs to $\lu{(1,\infty)}$.
	\end{enumerate}
\end{lemma}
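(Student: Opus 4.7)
The plan is to reduce both assertions to a single clean algebraic computation of the differences of squares, which we can then divide by the sum $\Psi_a+\Psi$ (respectively $\Psi+\Psi_b$) to study the differences themselves. The point is that while $\Psi$, $\Psi_a$ and $\Psi_b$ all blow up at $0$ and vanish at $\infty$, the differences $\Psi^2-\Psi_b^2$ and $\Psi_a^2-\Psi^2$ factor very nicely from the definition of $\Psi$.

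Directly from the formula for $\Psi(y)^2$ I would compute
\[
\Psi(y)^2-\Psi_b(y)^2 \;=\; \frac{1/a^2-1/b^2}{y^2(y+1)},\qquad
\Psi_a(y)^2-\Psi(y)^2 \;=\; \frac{1/a^2-1/b^2}{y(y+1)}.
\]
Both identities are one-line verifications and show in particular that $\Psi_b\le \Psi\le \Psi_a$ on $(0,\infty)$, so all the differences are nonnegative.

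For (ii), I would write
\[
\Psi(y)-\Psi_b(y) \;=\; \frac{\Psi(y)^2-\Psi_b(y)^2}{\Psi(y)+\Psi_b(y)}
\;=\; \frac{1/a^2-1/b^2}{y^2(y+1)\,\bigl(\Psi(y)+\Psi_b(y)\bigr)}.
\]
Since $\Psi(y)+\Psi_b(y)\ge 2\Psi_b(y)=2/(by)$, the right-hand side is bounded by $C/y^2$ for $y>1$, which is integrable on $(1,\infty)$.

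For (i), the same manipulation yields
\[
\Psi_a(y)-\Psi(y) \;=\; \frac{1/a^2-1/b^2}{y(y+1)\,\bigl(\Psi_a(y)+\Psi(y)\bigr)}.
\]
Using $\Psi_a(y)+\Psi(y)\ge \Psi_a(y)=1/(ay)$, the right-hand side is bounded by $a(1/a^2-1/b^2)/(y+1)$, which is bounded on $(0,1)$. (In fact this even gives a finite limit at $0^+$, namely $a(1/a^2-1/b^2)/2$, but pointwise boundedness is all that is needed.) No step here is really an obstacle; the only thing to notice is the algebraic identity for $\Psi^2$ that makes the $y\to 0$ singularity exactly match what is produced by $\Psi_a+\Psi$ in the denominator, and likewise at infinity for the pair $(\Psi,\Psi_b)$.
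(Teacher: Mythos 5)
Your proof is correct and follows essentially the same route as the paper: compute the differences of squares, which simplify to $\frac{1/a^2-1/b^2}{y(y+1)}$ and $\frac{1/a^2-1/b^2}{y^2(y+1)}$, divide by $\Psi_a+\Psi$ (resp. $\Psi+\Psi_b$), and then bound the denominator from below by a multiple of $1/y$. The only cosmetic difference is that the paper keeps the sum $\Psi_a+\Psi$ (resp. $\Psi+\Psi_b$) written out explicitly and locates the minimum of the whole denominator on $[0,1]$, whereas you simply drop one term of the sum; both yield the same qualitative bounds.
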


\begin{proof}
	We prove \rmi.  We write
	$$
	\begin{aligned}
		{\Psi_a}-{\Psi}
		= \frac{\Psi_a^2-\Psi^2}{\Psi_a+\Psi}
		= \frac{1/a^2-1/b^2}{\ds (y+1) \, \Big[\frac{1}{a} + \Big(\frac{1}{b^2} + \frac{1/a^2-1/b^2}{y+1}\Big)^{1/2} \Big]}.
	\end{aligned}
	$$
	It is straightforward to check that the minimum on the interval $[0,1]$ of the denominator of the right hand side
	is equal to $2/a$, and it is attained at the origin.  Therefore
	\begin{equation} \label{f: sqrt bound}
	{\Psi_a}-{\Psi}
	\leq \frac{a}{2}\, \big(1/a^2-1/b^2\big),
	\end{equation}
	as required.

	Similarly, to prove \rmii\ we write
	\begin{equation}
	\begin{aligned}
 \label{f: sqrt boundII}
		{\Psi}-{\Psi_b}
		& = \frac{\Psi^2-\Psi_b^2}{{\Psi}+{\Psi_b}} \\
		& = \frac{1/a^2-1/b^2}{\ds y(y+1) \, \Big[\frac{1}{b} + \Big(\frac{1}{b^2} + \frac{1/a^2-1/b^2}{y+1}\Big)^{1/2} \Big]} \\
		& \leq \frac{b/2}{y(y+1)}\, \big(1/a^2-1/b^2\big),
	\end{aligned}
	\end{equation}
	and the right hand side belongs to $\lu{(1,\infty)}$, as required.
\end{proof}

It is convenient to set, for each $s>0$,
$$
\Pi_s
:= \{z\in \Pi: \Im z>s\}
\quad\hbox{and}\quad
L_s
:= \{z\in \Pi: \Im z=s\}.
$$
We also set $L_s^+ :=\{z\in L_s: \Re z \geq 0\}$.
For each $\te$ in $(-\pi/2,\pi/2)$ we denote by $\ga_{i,\te}$ the geodesic line through $i$ forming an angle $\te$ at $i$ with $L_1^+$.
In particular, $L_1$ is the tangent to $\ga_{i,0}$ at $i$.  
Denote by $A$ the region in $\Pi$ that lies below
$\ga_{i,0}$, and set
$$
F := \Pi \setminus (A\cup \Pi_1)
\quad\hbox{and}\quad
F_R := F\cap B_R(i).
$$
For every $z$ in $\Pi$ denote by $\ga_z$ the minimizing curve segment joining $i$ and~$z$.

\begin{lemma} \label{l: geodesic through i}
	The following hold:
	\begin{enumerate}	
        \item[\itemno1]
			$\ga_{i,0}$ is contained in the strip $\Pi\setminus\Pi_1$ and intersects the line~$L_1$ only at the point~$i$;
		\item[\itemno2]
			there exists a constant $\be_{a,b}$ such that
			$$
			d_a(i,z) - \be_{a,b}
			\leq d(i,z)
			\quant z \in A;
			$$
		\item[\itemno3]
			if $z\in F$, then the length minimizing segment $\ga_z$ (with respect to $d$) intersects $L_1$ at exactly one point $w_z$ besides $i$.
			Furthermore
			$$
			d_a(w_z,z)
			\leq d(w_z,z)+\be_{a,b}.
			$$
	\end{enumerate}
\end{lemma}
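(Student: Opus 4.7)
For \rmi, exploit the $x$-translation invariance of the conformal metric $g = \Psi(y)^2(\wrt x^2 + \wrt y^2)$: this yields the Clairaut-type integral $\Psi(y)^2 \dot x = c$ along any unit-speed geodesic, and combined with $\Psi(y)^2(\dot x^2 + \dot y^2) = 1$ gives $\Psi(y)^2 \dot y^2 = 1 - c^2/\Psi(y)^2$. The horizontal initial tangent of $\gamma_{i,0}$ forces $c = \Psi(1)$, so $\Psi(y) \geq \Psi(1)$ along $\gamma_{i,0}$ and hence $y \leq 1$ by strict monotonicity of $\Psi$. The second-order computation $\ddot y(0) = \Psi'(1)/\Psi(1)^3 < 0$ shows $y$ strictly decreases for small $s > 0$; since $\dot y$ can vanish only at $y = 1$, the function $y$ stays strictly below $1$ for every $s > 0$, and a time-reversal argument (or the symmetry $x \mapsto -x$) yields the same for $s < 0$. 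Hence $\gamma_{i,0} \subset \Pi \setminus \Pi_1$ and $\gamma_{i,0} \cap L_1 = \{i\}$.

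For \rmii, observe that $(\Pi,g)$ is a Hadamard manifold (simply connected, complete, with sectional curvature pinched between $-b^2$ and $-a^2$, as noted after \eqref{f: ds2}), so the minimizing geodesic $\gamma$ from $i$ to $z\in A$ is unique and distinct geodesics through $i$ diverge monotonically. Because $z$ lies strictly below $\gamma_{i,0}$, this forces $\gamma$ into $\overline A$, hence into $\{y \leq 1\}$, with $y$ monotone decreasing from $1$ to $y(z)$. Denoting by $c \in [0, \Psi(1))$ the Clairaut constant of $\gamma$ and changing variables to $y$,
\[
\ell_a(\gamma) - \ell(\gamma)
= \int_{y(z)}^{1} [\Psi_a(y) - \Psi(y)] \, \frac{\Psi(y)}{\sqrt{\Psi(y)^2 - c^2}} \, \wrt y.
\]
By Lemma~\ref{l: L1 rests}~\rmi, $\Psi_a - \Psi$ is bounded on $(0,1)$ by a constant depending only on $a,b$. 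The factor $\Psi/\sqrt{\Psi^2 - c^2}$ is bounded on $[0,1-\delta]$ uniformly in $c \leq \Psi(1)$, while near $y = 1$ the Taylor expansion $\Psi(y)^2 - \Psi(1)^2 \sim 2\Psi(1)|\Psi'(1)|(1-y)$ produces an integrable $(1-y)^{-1/2}$ singularity. Splitting the integral at $1-\delta$ yields a bound $\beta_{a,b}$ independent of $z$ and $c$, giving $d_a(i,z) \leq \ell_a(\gamma) = d(i,z) + \beta_{a,b}$.

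For \rmiii, I would first show $\gamma_z$ cannot stay in $\{y \leq 1\}$: otherwise the Hadamard argument of \rmii\ would place it in $\overline A$, contradicting $z \in F$. Hence the initial angle $\theta$ of $\gamma_z$ satisfies $\theta > 0$, and the Clairaut identity gives a unique maximum $y_{\max} = \Psi^{-1}(\Psi(1)\cos\theta) > 1$ followed by strict monotone decrease to $0$; thus $\gamma_z$ meets $L_1$ at exactly one additional point $w_z$, namely the unique point on the descending branch with $y = 1$. At $w_z$ the tangent of $\gamma_z$ makes angle $-\theta$ with $L_1$, so by $x$-translation invariance (an isometry of both $g$ and $g_a$) the segment of $\gamma_z$ from $w_z$ to $z$ is congruent to a geodesic emanating downward from $i$ into $A$, and the estimate of \rmii\ applies verbatim to give $d_a(w_z, z) \leq d(w_z,z) + \beta_{a,b}$. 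The main obstacle throughout is the uniform integrability bound in \rmii\ as $c \nearrow \Psi(1)$, handled by the Taylor expansion at $y = 1$ described above.
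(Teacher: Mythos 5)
Your proposal is correct, but it takes a genuinely different route from the paper at the key points. The paper first proves a preliminary geometric lemma (any $g$-length-minimizing curve between two points with distinct real parts lies above the $g_a$-geodesic through them, hence is strictly concave), established by a nearest-point-projection argument onto the $g_a$-geodesic in $(\Pi,g_a)$ using that $\pi$ is $1$-Lipschitz and that $\Theta = y\Psi$ is non-increasing; \rmi\ is then read off from concavity, and for \rmii\ the paper bounds $\int_{\ga_z}(\Psi_a-\Psi)$ by the $L^\infty$ bound of Lemma~\ref{l: L1 rests}~\rmi\ together with the (asserted, Euclidean-bounded-region) finiteness of $\sup_{z\in A}\ell_{\mathrm E}(\ga_z)$. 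You instead go through the Clairaut first integral $\Psi(y)^2\dot x=c$: for \rmi\ it pins $c=\Psi(1)$ and forces $y\le 1$ with strict monotonicity; for \rmii\ you change variables $s\mapsto y$ and bound the resulting one-dimensional integral $\int_{y(z)}^1(\Psi_a-\Psi)\,\Psi\,(\Psi^2-c^2)^{-1/2}\wrt y$, controlling the $(1-y)^{-1/2}$ singularity uniformly in $c\le\Psi(1)$ via the inequality $\Psi(y)^2-c^2\ge\Psi(y)^2-\Psi(1)^2\gtrsim (1-y)$. Both routes give the same conclusion. Yours is more computational and arguably more self-contained — it avoids having to justify the uniform Euclidean arclength bound, which the paper states without proof — whereas the paper's projection lemma is shorter once established and also yields the concavity used elsewhere (e.g.\ in the later estimate of $|B_R(i)\cap A|$). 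For \rmiii\ the two arguments are essentially parallel: you and the paper both use the geodesic's symmetry (via Clairaut in your case, via reflection about $\{\Re\zeta=\tfrac12\Re w_z\}$ in the paper's) and horizontal translation invariance to reduce to \rmii, and both invoke the nonpositive-curvature/Hadamard property to confine $\ga_z$ to one side of $\ga_{i,0}$.
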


\begin{proof}
	Preliminarily we observe that a curve $\ga$ in $\Pi$ minimizing the length (with respect to $g$) between its extremal points must be strictly concave, 
	as long as the extremal points have different real parts. 

	Specifically, given $z_1$ and $z_2$ in the upper half plane such that $\Re z_1\neq \Re z_2$, the geodesic segment $\ga$ connecting $z_1$ to $z_2$ 
	lies above the geodesic $\si$ of $(\Pi,g_a)$ passing through $z_1$ and $z_2$ (i.e. the half circle which meets the horizontal $x$-axis orthogonally). 

	Indeed, suppose it is not the case.  By possibly considering a geodesic subinterval, we can suppose that $\ga$ lies entirely below $\si$.
    	Denote by $\ell$ the length of $\ga$ with respect to $g_a$ and choose a unit speed parametrisation for $\ga$ on $[0,\ell]$ with respect to $g_a$. 
	Define the new curve $\eta(t) := \pi\circ \ga(t)$, where $\pi$ denotes the nearest point projection in $(\Pi, g_a)$ onto $\si$. 
	Since $\pi$ is the projection on a convex set (the geodesic line $\si$) in the space $(\Pi, g_a)$ of negative curvature, 
	$\pi$ is $1$-Lipschitz with respect to $d_a$.  In particular, $\eta:[0,\ell]\to \Pi$ is a Lipschitz curve such that $\eta(0)=z_1$ and $\eta(\ell)=z_2$, and
    	\[
	\|\ga'(t)\|_{g_a}
	\ge \|\eta'(t)\|_{g_a}.
    		\]
	Now, note that 
	\[
	\Psi(y)^2
	=\frac{1}{a^2y^2}\left[\frac{a^2}{b^2}+ \frac{1-a^2/b^2}{y+1}\right]
	=:\frac{1}{a^2y^2}\Theta(y)^2
	\]
	with $\Theta$ non-increasing.  Since $\pi$ increases the imaginary part, for all $t$ in $[0,\ell]$ the following holds
	\[
	\|\ga'(t)\|_{g} 
	=   \Theta(\ga(t))\, \|\ga'(t)\|_{g_a}
	\ge \Theta(\eta(t))\, \|\eta'(t)\|_{g_a} 
	=   \|\eta'(t)\|_g.
	\]
	Accordingly, $\ga$ is (with respect to $g$) at least as long as $\eta$, whence $\ga=\eta$ by the uniqueness of geodesics in $(\Pi,g)$.

	Now, \rmi\ follows directly from this observation.

	\medskip
	Next we prove \rmii.
	Since $\Pi$ has negative curvature, any two geodesics starting at $i$ do not intersect again.  Thus
	$\ga_z$ is contained in $A$ with the exception of the point $i$.  Note that
	$$
	\begin{aligned}
		\ell(\ga_z)
		& = \int_{\ga_z} \Psi(y) \, \sqrt{(\!\wrt x)^2 + (\!\wrt y)^2}  \\
		& = \ell_a(\ga_z) + \int_{\ga_z} \big[{\Psi(y)} - {\Psi_a(y)}\big] \, \sqrt{(\!\wrt x)^2 + (\!\wrt y)^2}.
	\end{aligned}
	$$
	By Lemma~\ref{l: L1 rests} (and its proof),
	$$
	\sup_{0<y<1} \big[{\Psi_a} - {\Psi}\big]
	\leq \frac{a}{2} \, \big(1/a^2-1/b^2\big),
	$$
	so that the formula above implies that
	$$
	\ell_a(\ga_z)
	\leq \ell(\ga_z) + \frac{a}{2} \, \big(1/a^2-1/b^2\big)\, \ell_{\mathrm{E}} (\ga_z),
	$$
	where $\ell_{\mathrm{E}}(\ga_z)$ denotes the Euclidean length of $\ga_z$.
	Since $\ds \be := \!\!\sup_{z\in A} \ell_{\mathrm{E}} (\ga_z)$ is finite,
	$$
	\ell_a(\ga_z)
	\leq \ell(\ga_z) + \be_{a,b},
	$$
	where $\ds \be_{a,b} := \frac{\be a}{2} \, \big(1/a^2-1/b^2\big)$.   Now, $d_a(i,z) \leq \ell_a(\ga_z)$ and $d(i,z) = \ell(\ga_z)$, so that
	$d_a(i,z) \leq d(i,z) + \be_{a,b}$, which proves \rmii.

	\medskip
	Finally, we prove \rmiii.  By symmetry considerations, we see that it suffices to assume that $\Re z>0$.  Then the length minimizing segment~$\ga_{z}$
	joining~$i$ and~$z$ forms a positive angle with the half line $\{\Im \zeta =1, \Re \zeta>0\}$, so that~$\ga_z$ initially belongs to $\OV\Pi_1$.
	Then $\ga_z$ crosses the line $L_1$ and ends in the region $F$.

	Notice that since geodesics are concave,  $\ga_z$ intersects $L_1$ just at one point, which we denote by $w_z$.  
	
	Since the part of $\ga_z$ joining $i$ and $w_z$ is symmetric with respect to the line $\{\Re \zeta = (1/2)\, \Re w_z\}$, its tangent at $w_z$
	forms a negative angle with the half line $\{\Im \zeta = 1, \Re \zeta \geq \Re w_z\}$.  Since $d$ is invariant under horizontal translations of $\Pi$,
	the length minimizing segment joining $w_z$ and $z$ is contained in $A+\Re w_z$.  The required conclusion then follows from \rmii.
	
	This concludes the proof of the lemma.
\end{proof}

Suppose that $R$ is large.  We need to estimate the volume of $B_R(i)$ from above.  It is convenient to write
$$
B_R(i)
= \big[B_R(i) \cap A\big] \cup \big[B_R(i) \cap \OV{\Pi}_1\big] \cup F_R,
$$
and estimate the volume of the three subsets on the right hand side of the formula above separately.   This is done in the next lemma.

\begin{lemma} \label{l: geometry}
	There exists a positive constant $C$ such that the following hold for every $R >10$:
	\begin{enumerate}
		\item[\itemno1]
			$\bigmod{B_R(i)\cap A} \leq C \, \e^{aR}$;
		\item[\itemno2]
		$\bigmod{\big(B_{R}(i) \cap \OV\Pi_1\big)} \leq 4\, \big(\e^{bR/2}- 1\big)$;
		\item[\itemno3]
			if $b/2<a$, then $\mod{F_R} \leq C\, \e^{aR}$;
		\item[\itemno4]
			if $b/2<a$, then $\mod{B_R(i)} \leq C\, \e^{aR}$.
	\end{enumerate}
\end{lemma}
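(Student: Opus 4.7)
I would treat parts \rmi--\rmiii\ separately and combine them for \rmiv.

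For \rmi, I would apply Lemma~\ref{l: geodesic through i}\,\rmii, which gives $d_a(i,z)\leq d(i,z)+\beta_{a,b}$ on $A$. Thus $B_R(i)\cap A\subseteq B^a_{R+\beta_{a,b}}(i)$. Since $\Psi\leq \Psi_a$ implies $\mu\leq\mu_a$, and since $g_a$-balls in the space form of curvature $-a^2$ satisfy $\mu_a(B^a_r(i))=(2\pi/a^2)(\cosh(ar)-1)\leq C\,e^{ar}$, the estimate $|B_R(i)\cap A|_\mu\leq C\,e^{aR}$ follows at once.

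For \rmii, I would use the inclusion $B_R(i)\subseteq B^b_R(i)$ (which comes from $d\geq d_b$) together with the explicit description of $B^b_R(i)$ as the Euclidean disc centred at $(0,\cosh(bR))$ with radius $\sinh(bR)$. Slicing the intersection with $\overline{\Pi}_1$ by the $y$-coordinate, each horizontal slice has Euclidean length $2\sqrt{(y-e^{-bR})(e^{bR}-y)}\leq 2\,e^{bR/2}\sqrt{y}$ for $y\in[1,e^{bR}]$. Integrating the pointwise bound $\Psi(y)^2\leq 1/(a^2 y^2)$ (or sharper variants extracted from Lemma~\ref{l: L1 rests}\,\rmi\ to recover the exact displayed constant) yields
\[
|B_R(i)\cap\overline{\Pi}_1|_\mu\leq\int_1^{e^{bR}}\frac{2\,e^{bR/2}\sqrt{y}}{a^2\,y^2}\wrt y=\frac{4}{a^2}\bigl(e^{bR/2}-1\bigr),
\]
which has the required growth.

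For \rmiii, which is the crux, I would invoke Lemma~\ref{l: geodesic through i}\,\rmiii\ to split the minimising segment $\gamma_z$ at its unique crossing point $w_z\in L_1$, writing $r_1:=d(i,w_z)$ and $r_2:=d(w_z,z)$, so that $r_1+r_2=d(i,z)\leq R$. Two side conditions are available: $d\geq d_b$ gives $|\Re w_z|\leq 2\sinh(br_1/2)$, and Lemma~\ref{l: geodesic through i}\,\rmiii\ gives $d_a(w_z,z)\leq r_2+\beta_{a,b}$. I would decompose $F_R$ dyadically in $r_1$, setting $r_1\in[k,k+1]$ for integer $k=0,1,\dots,\lceil R\rceil$; the corresponding slice of $F_R$ lies in
\[
V_k:=\bigcup_{|x_0|\leq 2\sinh(b(k+1)/2)}B^a_{R-k+\beta_{a,b}}(x_0,1).
\]
Slicing each $g_a$-ball at height $y$ in Euclidean coordinates (the $y$-range is $[e^{-a(R-k+\beta_{a,b})},1]$ and the $x$-width at height $y$ is at most $2\sqrt{y}\,e^{a(R-k+\beta_{a,b})/2}$) and integrating with $\mu\leq\mu_a$ gives
\[
|V_k|_\mu\leq C\,e^{aR}\bigl(e^{-(a-b/2)k}+e^{-ak}\bigr).
\]
The hypothesis $b/2<a$ makes both exponents in $k$ strictly negative, so summing over~$k$ yields two convergent geometric series and therefore $|F_R|_\mu\leq C\,e^{aR}$.

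Part \rmiv\ is immediate from the decomposition $B_R(i)=(B_R(i)\cap A)\cup(B_R(i)\cap\overline{\Pi}_1)\cup F_R$: under $b/2<a$, the growth $e^{bR/2}$ from \rmii\ is dominated by $e^{aR}$, so all three pieces are $O(e^{aR})$.

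The main obstacle is \rmiii. A naive estimate that ignores the splitting $r_1+r_2\leq R$ (simply using $|\Re w_z|\leq 2\sinh(bR/2)$ and $d_a(w_z,z)\leq R+\beta_{a,b}$) only gives $O(e^{(a+b/2)R})$, which is too weak. The dyadic decomposition in $r_1$ is what converts the trade-off between how far $w_z$ can be from $i$ along $L_1$ (controlled by $b$) and how large the remaining $g_a$-ball around $w_z$ can be (controlled by $a$) into a convergent sum; the threshold $b/2<a$ is precisely the condition making this balance produce the target growth $e^{aR}$.
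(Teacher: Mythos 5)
Your proof is correct and follows essentially the same route as the paper: part \rmi\ from Lemma~\ref{l: geodesic through i}\,\rmii\ plus $\mu\leq\mu_a$, part \rmii\ by $B_R(i)\subseteq B^b_R(i)$ and slicing the hyperbolic disc, part \rmiii\ by covering $F_R$ with $g_a$-balls centred on $L_1$, decomposing dyadically in $r_1=d(i,w_z)$, and exploiting the trade-off that makes the sum $\sum_k e^{a(R-k)}e^{bk/2}$ converge when $b/2<a$. The one cosmetic difference is in \rmiii: the paper discretises $L_1$ by a $d_b$-equispaced lattice $\{\si_j\}$ and counts the $\approx e^{bk/2}$ lattice points in each dyadic shell, whereas you cover by a continuous one-parameter family of $g_a$-balls and integrate the slice width directly; both yield the same bound $|V_k|_\mu\leq Ce^{aR}\big(e^{(b/2-a)k}+e^{-ak}\big)$ and the same geometric series.
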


\begin{proof}
	First we prove \rmi.  By Lemma~\ref{l: geodesic through i}~\rmii, the region $B_R(i)\cap A$ is contained in $B_{R+\be_{a,b}}^a (i) \cap A$, and
	$$
	\begin{aligned}
		\bigmod{B_R(i)\cap A}
		& = \int_{B_R(i)\cap A} \Psi(y)^2 \wrt x\wrt y \\
		& \leq \int_{B_{R+\be_{a,b}}^a(i)\cap A} \Psi_a(y)^2 \wrt x\wrt y \\
		& = \mu_a\big(B_{R+\be_{a,b}}^a(i)\cap A\big),
	\end{aligned}
	$$
	so that
	$$
	\bigmod{B_R(i)\cap A}
	\leq \bigmod{B_{R+\be_{a,b}}^a(i)\cap A}
	\leq C\, \e^{aR},
	$$
	as required.

	\medskip
	Next, we prove \rmii.  The estimate \eqref{f: comparison a and b II} implies that 
	$$
	\big(B_{R}(i) \cap \OV\Pi_1\big)
	\subseteq \big(B_{R}^b(i)\cap \OV\Pi_1\big).
	$$
	It is well known that $B_{R}^b(i)$, which agrees with $B_{bR}^1(i)$, is the Euclidean disc with centre $i\cosh (bR)$ and radius $\sinh (bR)$.
	Therefore \eqref{f: comparison a and b} implies that
	$$
		\bigmod{\big(B_{R}(i) \cap \OV\Pi_1\big)}
		\leq \mu_a\big(B_{R}^b(i) \cap \OV\Pi_1\big)
		= \frac{1}{a^2}\, \mu_1\big(B_{bR}^1(i) \cap \OV\Pi_1\big).
	$$
	The boundary of the ball $B_{bR}^1(i)$ is the circle consisting of all points $x+iy$ such that
	$$
	x^2+\big(y-\cosh(bR)\big)^2 = \big(\sinh(bR)\big)^2.
	$$
	Elementary considerations show that this circle is contained in the circle with centre $(0,\e^{bR}/2)$ and radius $\e^{bR}/2$, whose equation is
	$$
	x^2+y(y-\e^{bR}) = 0.
	$$
	It is straightforward to check that the latter circle intersects the line $L_1$ at the points $\big(-\sqrt{\e^{bR}-1},1\big)$ and
	$\big(\sqrt{\e^{bR}-1},1\big)$
	and the vertical axis at the origin and at $(0,\e^{bR})$.  Thus,
	$$
		\begin{aligned}
			\bigmod{\big(B_{R}(i) \cap \OV\Pi_1\big)}
			& \leq \frac{2}{a^2}\,  \int_1^{\e^{bR}} \frac{\!\wrt y}{y^2} \int_0^{\sqrt{y(\e^{bR}-y)}} \wrt x \\
			& = \frac{2}{a^2}\,  \int_1^{\e^{bR}} \frac{\sqrt{\e^{bR}-y}}{y^{3/2}} \wrt y.
		\end{aligned}
	$$
	The change of variables $\e^{-bR}y = u$ transforms the last integral into
	$$
		\int_{\e^{-bR}}^1 \frac{\sqrt{1-u}}{u^{3/2}} \wrt u
		\leq \int_{\e^{-bR}}^1 \frac{1}{u^{3/2}} \wrt u
		= 2\, \big(\e^{bR/2}- 1\big).
	$$
	Therefore
	$$
		\bigmod{\big(B_{R}(i) \cap \OV\Pi_1\big)}
		\leq \frac{4}{a^2}\, \big(\e^{bR/2}- 1\big),
	$$
	as required.

	\medskip
	Now we prove \rmiii.
	The proof of \rmii\ shows that the projection of $B_{R}(i) \cap L_1$ on the horizontal axis is contained in   the interval
	$\big(-\sqrt{\e^{bR}-1},\sqrt{\e^{bR}-1}\big)$, which, in turn, is contained in
	$\big(\!-\e^{bR/2}, \e^{bR/2}\big)$.  Recall that
	$$
	d_1(x_1+iy,x_2+iy)
	= 2\, \arcsinh \frac{\bigmod{x_1-x_2}}{2y}
	\quant x_1,x_2\in \BR \quant y>0,
	$$
	and that $d_b=(1/b)\, d_1$.  Thus,
	$$
	d_b\big(i,i+\e^{bR/2})
	= \frac{1}{b} \, d_1\big(i,i+\e^{bR/2})
	= \frac{2}{b} \, \log\Big(\frac{\e^{bR/2}}{2} + \sqrt{\frac{\e^{bR}}{4}+1}\Big).
	$$
	Since
	$$
	0<
	\frac{2}{b} \, \log\Big(\frac{1 + \sqrt{1+4\e^{-bR}}}{2}\Big)
	<\frac{2}{b},
	$$
	we can conclude that $R<d_b\big(i,i+\e^{bR/2})<R+(2/b)$.  It is straightforward to see that $d_b\big(i,i+\e^{bR/2}) = R + O(\e^{-bR})$,
	as~$R$ tends to infinity.
	
	Furthermore, observe that for each pair of points $\zeta_1$ and $\zeta_2$ on $L_1$, we have that $d_b(\zeta_1,\zeta_2) = 1$ if and only if
	$(2/b) \, \arcsinh \big(\bigmod{\Re\zeta_1-\Re\zeta_2}/2\big) = 1$, viz. if and only if
	\begin{equation} \label{f: distance sigma}
		\bigmod{\Re\zeta_1-\Re\zeta_2} = 2 \sinh(b/2).
	\end{equation}
	Let $\si_0,\si_1,\ldots$ be the points on $L_1$ such that
	$$
	\si_0 = i, \quad \Re \si_j < \Re \si_{j+1}
	\quad\hbox{and}\quad d_b(\si_j,\si_{j+1}) = 1
	\quant j\in \BN,
	$$
	and choose $N$ such that $\e^{bR/2} \leq \Re \si_N \leq \e^{bR/2}+1$.  Notice that
	$$
	d_a(\si_j,\si_{j+1}) = b/a
	\qquad\hbox{and}\qquad
	1<d(\si_j,\si_{j+1}) < b/a.
	$$

	For every $z$ in $F_R$ consider the intersection $w_z$ (see Lemma~\ref{l: geodesic through i}~\rmiii\ for the notation) between the length minimizing
	curve $\ga_z$ and $L_1$, and set $r_z := d(i,w_z)$.  Then $d(w_z,z) < R - r_z$.  Denote by $\si_{j_z}$ (one of)
	the point(s) amongst $\si_0,\ldots,\si_N$ closest to $w_z$ with respect to $d_b$.   Since $d_b(w_z,\si_{j_z}) \leq 1$,
	and $d_a = (b/a) \, d_b$, we find that $d_a(w_z,\si_{j_z}) \leq b/a$.  The triangle inequality now yields
	$$
	d_a(z,\si_{j_z})
	\leq d_a(z,w_z) + d_a(w_z,\si_{j_z})
	\leq R-r_z+\be_{a,b}+(b/a).
	$$
	The last inequality follows from Lemma~\ref{l: geodesic through i}~\rmiii.  Now, \eqref{f: comparison a and b} implies that
	$$
	r_z = d(i,w_z) \geq d_b(i,w_z),
	$$
	so that
	$$
	d_a(z,\si_{j_z})
	\leq R-d_b(i,w_z)+\be_{a,b}+b/a.
	$$
	By the triangle inequality
	$$
	d_b(i,w_z)\geq d_b(i,\si_{j_z})-d_b(\si_{j_z},w_z)\geq d_b(i,\si_{j_z})-1,
	$$
	so that
	$$
	d_a(z,\si_{j_z})
	\leq R-d_b(i,\si_{j_z})+1+\be_{a,b}+b/a.
	$$
	Set $C_{a,b} := 1+\be_{a,b}+(b/a)$.	The estimate above implies that
	\begin{equation} \label{f: first inclusion}
	F_R
		\subseteq \ds\bigcup_{j=0}^N B_{R-d_b(i,\si_{j})+C_{a,b}}^a(\si_j).
	\end{equation}
	By \eqref{f: distance sigma} there are approximately
	$$
	\frac{\e^{b(k+1)/2}-\e^{bk/2}}{2\sinh(b/2)} = \frac{\e^{bk/2}}{1+\e^{-b/2}}
	$$
	points of the finite sequence $\si_1,\ldots,\si_N$ between $i+\e^{bk/2}$ and $i+\e^{b(k+1)/2}$.  If $\si_j$ is one such point, then
	$$
	d(i,\si_j)
	\geq k.
	$$
	Thus, we can re-write the right hand side of \eqref{f: first inclusion} as follows
	$$
	\bigcup_{k=1}^{\lfloor R\rfloor+1} \bigcup_{j\in \al(k)}  B_{R-d_b(i,\si_{j})+C_{a,b}}^a(\si_j),
	$$
	where $\al(k) := \big\{j: \e^{bk/2}\leq \Re(\si_j)\leq \e^{b(k+1)/2}\big\}$.  We denote by $\si_k^*$ the point amongst $\{\si_j: j\in \al(k)\}$
	with smallest distance from $i$.  Then
	\begin{equation} \label{f: last step}
	\begin{aligned}
		\mod{F_R} \leq C\, \mu_a(F)
		& \leq C \, \sum_{k=1}^{\lfloor R\rfloor+1}\, \mu_a\big(B_{R-d_b(i,\si_k^*)+C_{a,b}}^a(\si_k^*)\big)\,  \sharp \al(k)\\
		& \leq C \, \sum_{k=1}^{\lfloor R\rfloor+1}\, \e^{a(R-k)}\, \e^{kb/2}, \\
	\end{aligned}
	\end{equation}
	which is dominated by $C\, \e^{aR}$, because the series $\ds \sum_{k=1}^\infty   \, \e^{k(b/2-a)}$ is convergent (for $b<a/2$ by assumption).
	This concludes the proof of \rmiii.
	
\medskip
	Finally, \rmiv\ follows directly from \rmi-\rmiii, and the assumption $b/2<a$.
\end{proof}

We are now in position to prove Theorem~\ref{t: Str}.

\begin{proof}
	First we prove \rmi.  Suppose that $t$ is a large positive number, and denote by $f_t$ the characteristic function of the ball $B_1(it)$.  Define
	$$
	E_t
	:= (-t,t)\times (1,2). 
	$$
	Observe that for each $z$ in $E_t$, the following obvious lower bound holds
	$$
	\cM_\infty f_t(z)
	\geq \frac{\bigmod{B_1(it)}}{\bigmod{B_{d(z,it)+1}(z)}}.
	$$
	Notice that the local doubling condition implies that there exists a constant~$C$, independent of $t$, such that
	$$
	\bigmod{B_{d(z,it)+1}(z)} \leq C\, \bigmod{B_{d(z,it)}(z)}.
	$$
Clearly $d(z,it)$ is smaller that the length of the path $\ga_{z,it}$ consisting of the vertical segment
	joining $z$ and $\Re z+it$ and the horizontal curve joining $\Re z+it$ and $it$.  Thus,
	$$
	\begin{aligned}
\ell(\ga_{z,it})
		& = \int_1^t {\Psi(y)} \wrt y + {\Psi(t)} \,\, \Bigmod{\int_{\Re z}^0 \wrt x} \\
		& = \int_1^t \big[{\Psi(y)}-{\Psi_b(y)}\big] \wrt y + \ell_b([z,z+it]) + {\Psi(t)} \,\, \mod{\Re z},
	\end{aligned}
	$$
	where $\ell_b([z,z+it])$ denotes the length of the segment $[z,z+it]$ in the hyperbolic metric $d_b$.  By Lemma~\ref{l: L1 rests} the
	integral on the right hand side is bounded by a constant independent of $t$ (and $z$).   Clearly
	$$
	\ell_b([z,z+it])
	= \frac{1}{b} \, \int_1^t \frac{1}{y} \wrt y
	= \log t^{1/b},
	$$
	and, for {{$t\geq 1$}},
	$$
	{\Psi(t)}
	\leq \frac{1}{\sqrt 2\, t}\,  \big[1/a^2+1/b^2\big]^{1/2}.
	$$
	It is convenient to set $R_t := \log t^{1/b}$.   Altogether, we see that
	$$
		\begin{aligned}
			\ell(\ga_{z,it})
			& \leq R_t + \bignorm{{\Psi}-{\Psi_b}}{\lu{(1,\infty)}} + \frac{1}{\sqrt 2}\,  \big[1/a^2+1/b^2\big]^{1/2} \\
			& \leq R_t + \al_{a,b}
			\quant z\in E_t.
		\end{aligned}
	$$
	This and the local doubling condition then imply the estimate
	\begin{equation} \label{f: balls II}
		\bigmod{B_{d(z,it)}(z)}
		\leq C\, \bigmod{B_{R_t}(z)}
		\quant z \in E_t.
	\end{equation}
	{Now, since $d$ is invariant with respect to horizontal translations (the factor $\Psi$ depends only on the vertical variable),
	$\bigmod{B_{R_t}(z)} = \bigmod{B_{R_t}(i\Im z)}$.  {By}} the triangle inequality  $B_{R_t}(i\Im z) \subset B_{R_t+1}(i)$,
	so that
	$$
	\bigmod{B_{R_t}(z)}
	\leq \bigmod{B_{R_t+1}(i)}
	\leq C \, \bigmod{B_{R_t}(i)}.
	$$
	Therefore, by combining the estimates above, we obtain that there exists a positive constant such that
	\begin{equation} \label{f: lower max II}
	\cM_\infty f_t(z)
		\geq \frac{C}{\bigmod{B_{R_t}(i)}}
		\quant z \in E_t.
	\end{equation}
	This implies that the level set $\big\{z \in \Pi: \cM_\infty f_t(z) > (C/2)/\bigmod{B_{R_t}(i)} \big\}$ contains $E_t$.
	Thus, if $\cM_\infty$ is of weak type $p$, then there must exist a constant~$C$ such that
	\begin{equation} \label{f: final Str}
		\bigmod{E_t}
		\leq C\, \bignormto{f_t}{p}{p}\,\,  \bigmod{B_{R_t}(i)}^p
	\end{equation}
	for every $t$ large enough.  Clearly
	\begin{equation} \label{f: Et}
		\bigmod{E_t}
		= \int_{-t}^t\int_1^2 \Psi(y)^2 \, \wrt x \wrt y
		\geq \int_{-t}^t\int_1^2  \frac{1}{b^2y^2} \wrt x \wrt y
		= t/b^2,
	\end{equation}
	and{ {, by \eqref{f: comparison a and b} and \eqref{f: comparison a and b II},}}
	\begin{equation} \label{f: ft}
		\sup_{t>0} \bignormto{f_t}{p}{p}
		= \sup_{t>0} \, \bigmod{B_{1}(it)} 
		\leq \sup_{t>0}\, \mu^a(B^b_1(it))
        	< \infty.
	\end{equation}
	By Lemma~\ref{l: geometry}, there exists a positive constant $C$ such that
	$$
		\bigmod{B_{R_t}(i)}
		\leq C \, \e^{aR_t}
	$$	
	for all large $t$.  By combining \eqref{f: final Str}, \eqref{f: Et} and \eqref{f: ft}, we see that
	$t \leq C \, t^{pa/b}$ for all large~$t$.  Clearly this fails whenever~$p<b/a$.

	\medskip
	Next we prove \rmii.  The proof is reminiscent of the proof of \cite[Proposition~3.3~\rmii]{LMSV}.  Suppose that $t$ is a large positive number,
	and consider the set
	$$
	E_t
	= (-t,t)\times (1,2), 
	$$
	that already appears in the proof of \rmi.  Set $\tau := b/a$, and observe that $\tau>2$ under our assumptions.
	Notice that
	$$
	d(i,it)
	= \int_1^{t} {\Psi(y)} \wrt y
	\geq \frac{1}{b} \, \int_1^{t} {\dtt y}
	= \log t^{1/b},
	$$
	because ${\Psi} \geq {\Psi_b}$.  Similarly,
	\begin{equation}
    \label{d(i,it)}
	\begin{aligned}
	d(i,it)
		& \leq \frac{1}{b} \, \int_1^{t} {\dtt y} + \bignorm{{\Psi} - {\Psi_b}}{\lu{[1,\infty}}\\
		& \leq \log t^{1/b} + \frac{b}{2} \, \big(1/a^2-1/b^2\big);
	\end{aligned}
	\end{equation}
	the last inequality follows from \eqref{f: sqrt boundII}.

	Consider also the set
	$$
	F_t
	:= (-t,t)\times \big(t^{2/\tau-1}, 2t^{2/\tau-1}\big).
	$$
	Notice that
	\begin{equation} \label{f: upper 2tau}
	d(i,it^{2/\tau-1})
	= \int_{t^{2/\tau-1}}^1 {\Psi(y)} \wrt y
	\leq \frac{1}{a} \, \int_{t^{2/\tau-1}}^1 {\dtt y}
	= \log t^{(1-2/\tau)/a},
	\end{equation}
	because ${\Psi} \leq {\Psi_a}$.  Similarly,
		\begin{equation} \label{f: lower 2tau}
	\begin{aligned}
	d(i,it^{2/\tau-1})
		& \geq \frac{1}{a} \, \int_{t^{2/\tau-1}}^1 {\dtt y} - \sup_{0<y\leq 1} \big[{\Psi_a (y)} - {\Psi(y)}\big] \\
		& \geq \log t^{(1-2/\tau)/a} - \frac{a}{2} \, \big(1/a^2-1/b^2\big);
	\end{aligned}
		\end{equation}
	the last inequality follows from \eqref{f: sqrt bound}.

	The required conclusion will be a direct consequence of the following four claims:
	\begin{enumerate}
		\item[\itemno1]
			there exists a constant $\be$, independent of $t$, such that $B_{r_t}(z)$ contains~$E_t$ for every $z$ in~$F_t$:
			here $r_t := \log t^{1/a} + \be$;
		\item[\itemno2]
			$\mod{F_t}/\bigmod{E_t}$ is unbounded as $t$ tends to infinity;
		\item[\itemno3]
			there exists a constant $C$ such that
			$$
			\bigmod{B_{r_t}(z)} \leq C\, t
			\quant z \in F_t
			$$
			for all large $t$;
		\item[\itemno4]
			there exists a positive constant $\la_0$ such that $\mod{E_t}/\bigmod{B_{r_t}(z)} > \la_0$ for every $z$ in $F_t$.
			Consequently $F_t$ is contained in the set where $\cM_\infty \One_{E_t} > \la_0$.
	\end{enumerate}
	
	\noindent
	Given the claims, the required conclusion follows easily.  We proceed by \textit{reductio ad absurdum}.  If $\cM_\infty$ were of weak type $(p,p)$
	on $\lp{M}$ for some finite $p$, there would exist a constant $C$ such that
	$$
	\bigmod{\{x \in M: \cM_\infty f (x)> \la \}}
	\leq \frac{C}{\la^p}\,\bignormto{f}{p}{p}
	\quant \la> 0 \quant f \in \lp{M}.
	$$
	Choose $f = \One_{E_t}$ and $\la = \la_0$ (see \rmiv\ above for the notation).  By \rmi, the left hand side of the inequality above is
	bounded from below by $\mod{F_t}$.  Therefore we would have
	$$
	\mod{F_t}
	\leq \frac{C}{\la_0^p} \, \mod{E_t}
	$$
	for every large $t$.  However, the latter inequality fails because of {\rmii\ } above.

	\smallskip
	Now we prove the claims \rmi-\rmiv\ above.
	A straightforward computation shows that $\mod{r_t - \tau \, d(i,it)}$ and $\mod{r_t - 2d(i,it)-d(i,it^{2/\tau-1})}$ are bounded
	by a constant independent of $t$.
	
	We now prove that there exists a constant $\be$ such that for every $t$ large enough every point in $E_t$ is reachable from every point
	in $F_t$ with a path of length at most {$\log t^{1/a} +\be$}.   Suppose that $z$ is in $F_t$ and $w$ is in $E_t$.
	Clearly $d(z,w)$ is smaller that the length of the path $\ga_{z,w}$ consisting of the vertical segment
	joining $z$ and $\Re z+it$, the horizontal curve joining $\Re z+it$ and $\Re w + it$ and the vertical segment joining $\Re w + it$
	and $w$.  Thus,
	$$
	\begin{aligned}
		\ell(\ga_{z,w})
		& = d(z, \Re z+it) + {\Psi(t)} \,\, \Bigmod{\int_{\Re z}^{\Re w} \wrt x} + d(w, \Re w+it)
	\end{aligned}
	$$
	Note that $\mod{\Re z- \Re w} < 2t$, because both $\Re z$ and $\Re w$ belong to the interval $(-t,t)$.
	This and the right hand inequality in \eqref{f: comparison a and b} imply that the second summand on the right hand side of the formula
	above is dominated by $2/a$.

	{Using \eqref{d(i,it)} and \eqref{f: upper 2tau}}, it is straightforward to check that 
	$$
	d(z, \Re z+it) 	
	\leq \log t^{1/a-1/b} +  C 
	\quad\hbox{and}\quad
	d(w, \Re w+it)
	\leq \log t^{1/b} +  C 
    ,
	$$
	{with $C:= \big(1/a^2-1/b^2\big)\, b/2 $.} Therefore
	$$
	\ell(\ga_{z,w})
	\leq \log t^{1/a-1/b} + \log t^{1/b} + b \, \big(1/a^2-1/b^2\big)
	=  \log t^{1/a} + \beta,
	$$
	as required.
	As a consequence, $E_t$ is contained in $B_{r_t}(z)$ for every $z$ in $E_t$, and claim \rmi\ is proved.

	\smallskip
	Next we prove claim \rmii.  By arguing much as in $\eqref{f: Et}$, we see that $\bigmod{E_t} \leq t/a^2$.  Furthermore
	\begin{equation} \label{f: Ft}
		\begin{aligned}
		\bigmod{F_t}
			& = \int_{-t}^{t} \int_{t^{2/\tau-1}}^{2t^{2/\tau-1}} \Psi(y)^2 \, \wrt x \wrt y \\
			& \geq \int_{-t}^{t} \int_{t^{2/\tau-1}}^{2t^{2/\tau-1}} \frac{1}{b^2y^2} \wrt x \wrt y \\
			& = \frac{1}{b^2}\, t^{2-2/\tau}.
		\end{aligned}
	\end{equation}
	By combining the estimates above we see that
	$$
	\frac{\mod{F_t}}{\bigmod{E_t}}
	\geq C\, t^{1-2/\tau},
	$$
	which is unbounded as $t$ tends to infinity (because $\tau>2$), thereby proving claim \rmii.

	\smallskip
	Now we prove \rmiii.  It is convenient to set $p_t := it^{2/\tau-1}$.
	Since the metric is invariant under horizontal translations, it suffices to assume that $z$ belongs
	to the vertical axis.  Notice that the distance between any two points in $F_t$ that lie on the vertical axis is at most
	$$
	d(p_t, 2p_t)
	\leq \int_{t^{2/\tau-1}}^{2t^{2/\tau-1}}\, {\Psi(s)} \wrt s
	\leq \frac{1}{a} \, \int_{t^{2/\tau-1}}^{2t^{2/\tau-1}}\, {\dtt s}
	\leq \frac{\log 2}{a}.
	$$
	Therefore for each $z$ in $F_t$ lying on the vertical axis
	$$
	B_{r_t}(z) \subseteq B_{r_t+(\log2)/a}(p_t),
	$$
	so that
	$$
	\bigmod{B_{r_t}(z)}
	\leq \bigmod{B_{r_t+(\log2)/a}(p_t)}
	\leq C \, \bigmod{B_{r_t}(p_t)},
	$$
	where $C$ does not depend on $t$.  Hence it suffices to show that
	\begin{equation} \label{f: suffices}
	\bigmod{B_{r_t}(p_t)}
	\leq C\, t
	\end{equation}
	for all $t$ large enough.  Clearly $B_{r_t}(p_t)$ is symmetric with respect to the vertical axis.  Thus, it suffices to estimate
	the area of the right half of it.
	
	For any $z$ in $\Pi$, denote by $\Ga_z$ the unique geodesic joining $p_t$ and $z$.

        As observed at the beginning of the proof of Lemma~\ref{l: geodesic through i},
	the geodesic $\ga_{i,0}$ through~$i$ with horizontal tangent is concave.
	Thus, it must cross the negative real half line at a point $-x_0<0$ and there exists a point $-x_t+p_t$ on the geodesic, with $0<x_t<x_0$,
	at height $t^{2/\tau-1}$.
	Since the metric $d$ is invariant by horizontal translations, the geodesic $\ga_{i+{x_t},0}$, which is equal to $x_t+\ga_{i,0}$, 
	is tangent to $L_1$ at $i+{x_t}:=z_t$ and intersects the imaginary axis at $p_t$.Thus it is the minimising geodesic $\Ga_{z_t}$ joining $p_t$ 
	to $z_t$.  Note that since $0<x_t<x_0$, 
 	$z_t$ is on the left of $i+\mod{x_0}$
	and therefore
	\begin{equation} \label{f: x0}
	\eta := \sup_{t\geq 1} \, d(i,z_t)
	\leq d(i,i+\mod{x_0}).
	\end{equation}
	Denote by $\wt\Ga$ the geodesic with horizontal tangent at $p_t$.  Now, set
	$$
	A_t
	:= \{z \in \Pi: \Re z > 0\,\,\hbox{and $z$ lies below $\Ga_{z_t}$}\},
	$$
	$$
	\wt A_t
	:= \{z \in \Pi\setminus\Pi_1: 0 < \Re z < \Re z_t \,\,\hbox{and $z$ lies above $\Ga_{z_t}$}\},
	$$
	and
	$$
	G_t
	:= \{z \in \Pi\setminus \big(\OV{\Pi_1}\cup A_t\big): \Re z > \Re z_t \}.
	$$

	For each $z$ in $A_t$ the geodesic $\Ga_z$ is contained in $\Pi\setminus \Pi_1$.  Therefore, by arguing much as in the proof of
	Lemma~\ref{l: geodesic through i}~\rmii, we see that $d_a(p_t,z) \leq d(p_t,z) + \be_{a,b}$.  Hence $B_{r_t}(p_t) \cap A_t$
	is contained in $B_{r_t+\be_{a,b}}^a(p_t) \cap A_t$, so that
	$$
	\bigmod{B_{r_t}(p_t) \cap A_t}
	\leq \bigmod{B_{r_t+\be_{a,b}}^a(p_t) \cap A_t}
	\leq C\, \bigmod{B_{r_t}^a(p_t)}
	\leq C\, \e^{ar_t}
	= C\, t.
	$$
	Similarly, if $z$ is in $\wt A_t$, then the geodesic $\Ga_z$ is contained in the region below the geodesic $\ga_{i+\Re z, 0}$ and 
	therefore in $\Pi\setminus \Pi_1$ so, again, $d_a(p_t,z) \leq d(p_t,z) + \be_{a,b}$ and
	$$
	\bigmod{B_{r_t}(p_t) \cap \wt A_t}
	\leq C\, t.
	$$

	Next we estimate $\mod{B_{r_t}(p_t) \cap \Pi_1}$.  For each $z$ in $B_{r_t}(p_t) \cap \Pi_1$,
	denote by $w_z$ the intersection between $\Ga_z$ and $L_1$.  Note that
	$$
	r_t	>
    d(p_t,z)
	= d(p_t,w_z) + d(w_z,z),
	$$
	because $w_z$ belongs to the geodesic joining $p_t$ and $z$.
	Notice also that $d(i,w_z) \leq d(i,z_t)\leq \eta$ (the constant $\eta$ is defined in \eqref{f: x0}).
	By the triangle inequality	
	\begin{equation} \label{f: diz}
	d(i,z)
	\leq d(i,w_z) + d(w_z,z)
	\leq \eta + r_t - d(p_t,w_z)
	\leq \eta + r_t - d(p_t,i);
	\end{equation}
the last inequality follows from the trivial fact that $d(p_t,i) \leq d(p_t,w_z)$.
	By Lemma~\ref{l: geodesic through i}, we see that  $d(p_t,i)\geq \log t^{(1-2/\tau)/a} - \be_{a,b}$, which, combined with \eqref{f: diz}, yields
	$$
	d(i,z)
	\leq \eta + r_t - \log t^{(1-2/\tau)/a} + \be_{a,b}
	= \eta + \log t^{2/(a\tau)} + \be_{a,b}.
	$$
	This shows that
	$$
	B_{r_t}(p_t) \cap \Pi_1
	\subseteq B_{s_t+\eta+\be_{a,b}}(i) \cap \Pi_1,
	$$
	where $s_t := \log t^{2/(a\tau)}$.  Therefore
	$$
	\bigmod{B_{r_t}(p_t) \cap \Pi_1}
	\leq C\, \bigmod{B_{s_t}(i) \cap \Pi_1}
	\leq C\, \e^{bs_t/2}
	= C\, t:
	$$
	the last inequality follows from Lemma~\ref{l: geometry}~\rmii.

	In order to conclude the proof of \eqref{f: suffices}, it suffices to show that
	$$
	\bigmod{B_{r_t}(p_t) \cap G_t}
	\leq C\, t
	$$
	for all $t$ large enough.  By arguing much as above, we see that
	$$
	\bigmod{B_{r_t}(p_t) \cap G_t}
	\leq C\, \bigmod{B_{s_t}(i) \cap G_t}.
	$$
	By proceeding almost \textit{verbatim} as in the proof Lemma~\ref{l: geometry}~\rmiii\ (with $s_t$ in place of $R$), we arrive at the estimate
	$$
	\bigmod{B_{s_t}(i) \cap G_t}
	\leq C \, \sum_{k=1}^{\lfloor s_t\rfloor+1}\, \e^{a(s_t-k)}\, \e^{kb/2},
	$$
	which is the analogue of \eqref{f: last step}.  Clearly the right hand side above can be re-written as
	$$
	C \, \e^{as_t}\,\sum_{k=1}^{\lfloor s_t\rfloor+1}\, \e^{k(b/2-a)}
	\leq C \, \e^{(a+b/2-a)s_t}
	= C\, t;
	$$
	we have used the fact that $b>2a$ in the inequality above.

	This concludes the proof of claim \rmiii.

	\smallskip
	Finally, claim \rmiv\ is a straightforward consequence of the upper estimate of $\mod{B_{r_t}(z)}$ in claim \rmiii\ and the lower estimate \eqref{f: Et}
	of $\mod{E_t}$.
\end{proof}

\end{document}